\newtheorem{thm}{Theorem}[section]
\newtheorem{cor}[thm]{Corollary}
\newtheorem{prop}[thm]{Proposition}
\newtheorem{lem}[thm]{Lemma}
\theoremstyle{definition}
\newtheorem{defn}[thm]{Definition}
\newtheorem{rmk}[thm]{Remark}
\newtheorem*{ack}{Acknowledgments}
\numberwithin{equation}{section}
\newcommand{\U}{\mathcal U}
\newcommand{\ot}{\otimes}
\newcommand{\gl}{\mathfrak{gl}}
\newcommand{\R}{\mathbb R}
\newcommand{\C}{\mathbb C}
\newcommand{\Z}{\mathbb {Z}}
\newcommand{\N}{\mathbb {N}}
\newcommand{\NN}{\mathcal N}
\newcommand{\Om}{\Omega}
\newcommand*\mycirc[1]{%
  \begin{tikzpicture}
    \node[draw,circle,inner sep=.5pt] {#1};
  \end{tikzpicture}}
\title{Loop differential K-theory}
\author[T.~Tradler]{Thomas~Tradler}
  \address{Thomas Tradler,
  Department of Mathematics, College of Technology, City University of New York, 300 Jay Street, Brooklyn, NY 11201}
  \email{ttradler@citytech.cuny.edu}
\author[S.~Wilson]{Scott O. Wilson}
  \address{Scott O. Wilson, Department of Mathematics, Queens College, City University of New York, 65-30 Kissena Blvd., Flushing, NY 11367}
  \email{scott.wilson@qc.cuny.edu}
\author[M.~Zeinalian]{Mahmoud~Zeinalian}
  \address{Mahmoud Zeinalian, Department of Mathematics, Long Island University, LIU Post, 720 Northern Boulevard, Brookville, NY 11548, USA} 
  \email{mzeinalian@liu.edu}
\begin{document}
\maketitle

\begin{abstract}
In this paper we introduce an equivariant extension of the Chern-Simons form, associated to a path of connections on a bundle over a manifold $M$, to the free loop space $LM$, and show it determines an equivalence relation on the set of connections on a bundle. We use this to define a ring, \emph{loop differential K-theory of $M$}, in much the same way that differential K-theory can be defined using the Chern-Simons form \cite{SS}. 
We show loop differential K-theory yields a refinement of differential K-theory, and in particular incorporates holonomy information into its classes. Additionally, loop differential K-theory is shown to be strictly coarser than the Grothendieck group of bundles with connection up to gauge equivalence. Finally, we calculate loop differential K-theory of the circle.
\end{abstract}
\allowdisplaybreaks

\setcounter{tocdepth}{2}
\tableofcontents

\section{Introduction}

Much attention has been given recently to differential cohomology theories, as they play an increasingly important role in geometry, topology and mathematical physics. Intuitively these theories improve on classical (extra)-ordinary cohomology theories by including some additional cocycle information. Such differential cohomology theories have been shown abstractly to exist in \cite{HS}, and perhaps equally as important, they are often given by some differential-geometric representatives. This illuminates not only the mathematical theory, but also helps give mathematical meaning to several discussions in physics. For example, differential ordinary cohomology (in degree 2) codifies solutions to Maxwell's equations satisfying a Dirac quantization condition, while differential K-theory (and twisted versions) aids in explaining the Ramond-Ramond field in Type-II string theories \cite{FH}, \cite{FMS}. 
Additionally, it is expected that several differential cohomology theories can be described in terms of low dimensional topological field theories, using an appropriate notion of geometric concordance \cite{ST}.

Differential K-theory itself is a geometric enrichment of ordinary K-theory, having 
several formulations, \cite{HS, BS, L, SS}. It is known that differential $K$-theory is determined uniquely by a set axioms, first given in \cite{BS2}. For the purposes of this paper we focus on the even degree part of $K$-theory, denoted by $K^0$, and the model of even differential $K$-theory presented by Simons-Sullivan in \cite{SS}, which proceeds by defining
an equivalence relation on the set of a connections on a bundle, by requiring that the Chern-Simons form, associated to a path of connections, is exact. 
Elements in this presentation of differential $K$-theory contain the additional co-cycle information of a representative for the Chern character. 

In this paper, we show that a path of connections $\nabla_s$ in fact determines an odd differential form on the free loop space $LM$ of the base manifold $M$, which we denote by $BCS(\nabla_s)$ and we call the \emph{Bismut-Chern-Simons} form. 
When restricted to the base manifold along the constant loops, we obtain the ordinary Chern-Simons form, see Proposition 
 \ref{prop:restBCS}.
 Furthermore, this form satisfies the following fundamental homotopy formula  
\[
(d+\iota)(BCS(\nabla_s))=BCh(\nabla_1)-BCh(\nabla_0),
\]
where $\iota$ is the contraction by the natural vector field on $LM$ induced by the circle action, and $BCh(\nabla)$ is the 
Bismut-Chern form on $LM$, see Theorem \ref{thm:dBCS}.

Proceeding in much the same way as in \cite{SS}, we prove that the condition of $BCS(\nabla_s)$ being exact defines an equivalence relation on the set of connections on a bundle, and use this to define a functor from manifolds to rings, which we call loop differential K-theory. Elements in this ring contain the additional information of the trace of holonomy of a connection, and in fact the entire extension of the trace of holonomy to a co-cycle on the free loop space known as the Bismut-Chern form, which is an equivariantly closed form on the free loop space that restricts to the classical Chern character \cite{B}, \cite{GJP}, \cite{Ha}, \cite{TWZ}.

As we show, the loop differential K-theory functor, denoted by $M \mapsto L \widehat K^0(M)$, maps naturally to K-theory by a forgetful map $f$, forgetting the connection, and to even $(d+\iota)$-closed differential forms on $LM$, denoted $\Omega^{\textrm{even}}_{(d+\iota)-cl} (LM)$. This latter map, denoted $BCh$ for Bismut-Chern character, gives the following commutative diagram of ring homomorphisms:
\[
\xymatrix{
 & K^0(M) \ar[rd]^{[BCh]} & \\
 L \widehat K^0(M) \ar[ru]^f \ar [rd]^{BCh} & &H^{\textrm{even}}_{S^1}(LM) \\
 & \Omega^{\textrm{even}}_{(d+\iota)-cl} (LM) \ar[ru] &
}
\]
Here $H^{\textrm{even}}_{S^1}(LM)$ denotes (the even part) of the quotient of the kernel of $(d+\iota)$ by the image of $(d+\iota)$, where $d+ \iota$ is restricted to differential forms on $LM$ in the kernel of $d\iota + \iota d = (d+\iota)^2$. 
 
An analogous commutative diagram for (even) differential K-theory $\widehat K^0(M)$ was established in \cite{SS}, and in fact 
the commutative diagram above maps to this analogous square for differential K-theory, making the following commutative diagram of ring homomorphisms:
\[
\xymatrix{
 &  K^0(M) \ar[rd]_{[BCh]}  \ar@{=}[rrd]^{id}  & & & \\
L \widehat K^0(M) \ar[ru]^f \ar [rd]_{BCh} \ar[rrd]^{\pi} & & H^{\textrm{even}}_{S^1}(LM) \ar[rrd]_{\rho^*}  &  K^0(M) \ar[rd]^{[Ch]}  & \\
& \Omega^{\textrm{even}}_{(d+\iota)-cl} (LM)  \ar[ru]  \ar[rrd]_{\rho^*}  & \widehat K^0(M)  \ar[ru]^g \ar [rd]^{Ch}  & & H^{\textrm{even}}(M)\\
& & & \Omega^{\textrm{even}}_{d-cl} (M)  \ar[ru] &
}
\]
Here $H^{\textrm{even}}(M)$ denotes the deRham cohomology of $M$, $\rho^*$ is the restriction to constant loops, $\pi$ is a well defined surjective restriction map by Proposition \ref{prop:restBCS}, $g$ is the forgetful map, and $Ch$ is the classical Chern character.

In Corollary \ref{cor:LoopdKrefines} we show the map $\pi : L \widehat K^0(M) \to \widehat K^0(M)$ is in general not one-to-one. In fact, elementary geometric examples are constructed over the circle to explain the lack of injectivity, showing loop differential K-theory of the circle contains strictly more information than differential K-theory of the circle. On the other hand, we also show in Corollary \ref{cor:LoopdKcoarser} that loop differential K-theory is strictly coarser than the ring induced by all bundles with connection up to gauge equivalence. The situation is clarified by a diagram of implications in section \ref{sec:implications}. In the final section of the paper, we calculate the ring $L \widehat K^0(S^1)$. In short, elements of $L \widehat K^0(S^1)$ are determined by the spectrum of holonomy. 

Since the differential forms on $LM$  do not have nice locality properties with respect to $M$, one does not expect $L \widehat K^0(M)$ to have nice descent properties with respect to open sets of $M$. Giving prominence to such locality properties, Bunke, Nikolaus and V\"olkl construct in \cite{BNV} a theory $\widehat{{\bf ku}}_{loop}$ related to $L \widehat K^0(M)$  through sheafification. Such a construction makes drastic changes in favor of locality with respect to open sets of $M$. It is shown in \cite[Section 6.2]{BNV}, that there is a map $L \widehat K^0(M)\to \widehat{{\bf ku}}_{loop}(M)$, which, in general, is not injective. 

We close by emphasizing that the Bismut-Chern form, and many of the properties used herein, have  been given a field theoretic interpretation by Han, Stolz and Teichner. Namely, they can be understood in terms of dimensional reduction from a $1|1$ Euclidean field theory on $M$ to a $0|1$ Euclidean field theory on $LM$ \cite{Ha}, \cite{ST}. We are optimistic that the extension of the Chern-Simons form to the free loop space, referred to here as the Bismut-Chern-Simons form,
 will also have a field theoretic interpretation, and may also be of interest in other mathematical discussions that begin with the Chern-Simons form, such as 3-dimensional TFT's, quantum computation, and knot invariants. 

\begin{ack}
We would like to thank Dennis Sullivan, Stefan Stolz, Peter Teichner, and James Simons for useful conversations concerning the topics of this paper. We also thank Jim Stasheff for comments on an earlier draft, which helped to improve the paper. The authors were partially supported by the NSF grant DMS-0757245. The first and second authors were supported in part by grants from The City University of New York PSC-CUNY Research Award Program. The third author was partially supported by the NSF grant DMS-1309099 and would like to thank the Max Planck Institute for their support and hospitality during his visit.
\end{ack}

\section{The Chern and Chern-Simons Forms on $M$}\label{SEC:CS-on-M}

In this section we recall some basic facts about the Chern-Simons form on a manifold $M$, which is associated to a path of connections on a bundle over $M$.

\begin{defn} Given a connection $\nabla$ on a complex vector bundle $E \to M$,  with curvature $2$-form $R$, we define the Chern-Weil form by
\begin{equation} \label{eq:Ch}
Ch(\nabla):= Tr(\exp(R)) = Tr \left (\sum_{n\geq 0} \frac{1}{n!} \underbrace{R\wedge \dots\wedge R}_n  \right) \in \Omega^{even}(M)
\end{equation}
For a time dependent connection $\nabla_s$ we denote the Chern form at time $s$ by $Ch(\nabla_s)$.
\end{defn}

For a path of connections $\nabla_s$, $s \in [0,1]$, the Chern forms $Ch(\nabla_1)$ and $Ch(\nabla_0)$ are related by the
 odd Chern-Simons form $CS(\nabla_s) \in \Omega^{odd}(M)$ as follows.

\begin{defn}Let $\nabla_s$ be a path of connections on a complex vector bundle $E \to M$. The Chern-Simons form is given by
\begin{equation} \label{eq:CS}
CS(\nabla_s)= Tr \left( \int_0^1 \sum_{n\geq 1} \frac{1}{n!} \sum_{i=1}^n ( R_s\wedge \dots\wedge R_s\wedge \underbrace{\nabla'_s}_{i^{\text{th}}}\wedge R_s\wedge\dots\wedge R_s ) ds. \right)
\end{equation}
where $\nabla'_s=\frac{\partial}{\partial s} \nabla_s$.
\end{defn}

Since connections are an affine space modeled over the vector space of $1$-forms with values in 
$End( E)$,  the derivative $\nabla'_s$ lives in $\Om^1( M; End (E))$, so $CS(\nabla_s)$ is a well defined differential form on $M$. We note that the formula above agrees with another common presentation, where all the terms $\nabla'_s$ are brought to the front. The fundamental homotopy formula involving $CS(\nabla_s)$ is the following \cite{CS, SS}:
\begin{prop} \label{prop:dCS}
For a path of connections $\nabla_s$ we have:
\[
d(CS(\nabla_s)) =Ch(\nabla_1)-Ch(\nabla_0)
\]
\end{prop}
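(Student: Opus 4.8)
The plan is to prove the identity by differentiating the Chern form $Ch(\nabla_s)$ with respect to the parameter $s$, recognizing the result as $d$ applied to the integrand that defines $CS(\nabla_s)$, and then integrating over $s \in [0,1]$ and invoking the fundamental theorem of calculus together with the fact that $d$ commutes with $\int_0^1(-)\,ds$. The three classical ingredients I would assemble are: (i) the variational formula $\frac{\partial}{\partial s}R_s = \nabla_s(\nabla'_s)$, where $\nabla_s$ also denotes the induced exterior covariant derivative on $\Om^\bullet(M;\End(E))$; (ii) the Bianchi identity $\nabla_s(R_s)=0$; and (iii) the compatibility $d(\Tr(\om)) = \Tr(\nabla_s(\om))$ for every $\om \in \Om^\bullet(M;\End(E))$, which holds because the pointwise commutator terms appearing in the covariant derivative are traceless.

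First I would rewrite the integrand of $CS(\nabla_s)$ in ``all $\nabla'_s$ in front'' form. Using the graded cyclicity of the trace and the fact that $R_s$ has even form-degree, each of the $n$ summands $\Tr(R_s\wedge\dots\wedge\underbrace{\nabla'_s}_{i}\wedge\dots\wedge R_s)$ equals $\Tr(\nabla'_s\wedge R_s^{\wedge(n-1)})$, so that
\[
CS(\nabla_s) = \int_0^1 \Tr\!\left(\sum_{n\geq 1}\frac{1}{(n-1)!}\,\nabla'_s\wedge R_s^{\wedge(n-1)}\right)ds = \int_0^1 \Tr\!\big(\nabla'_s\wedge \exp(R_s)\big)\,ds.
\]
Next I would compute $d$ of the integrand. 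Applying (iii), then the graded Leibniz rule for $\nabla_s$, and then the Bianchi identity (ii) to annihilate every term in which the covariant derivative lands on a factor of $R_s$, I obtain
\[
d\,\Tr\!\big(\nabla'_s\wedge\exp(R_s)\big) = \Tr\!\big(\nabla_s(\nabla'_s)\wedge\exp(R_s)\big).
\]
On the other hand, differentiating $Ch(\nabla_s)=\Tr(\exp(R_s))$ term by term and using (i) together with graded cyclicity (again $R_s$ is even) gives $\frac{\partial}{\partial s}Ch(\nabla_s) = \Tr\big(\tfrac{\partial R_s}{\partial s}\wedge\exp(R_s)\big) = \Tr\big(\nabla_s(\nabla'_s)\wedge\exp(R_s)\big)$, the same expression. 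Hence $d$ of the integrand equals $\frac{\partial}{\partial s}Ch(\nabla_s)$, and therefore $d(CS(\nabla_s)) = \int_0^1 \frac{\partial}{\partial s}Ch(\nabla_s)\,ds = Ch(\nabla_1)-Ch(\nabla_0)$.

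I expect the only real care to lie in the sign bookkeeping rather than in any conceptual difficulty: one must verify (i) with the correct sign (most cleanly from $R_s = d\A_s + \A_s\wedge\A_s$ in a local trivialization $\nabla_s = d+\A_s$, or invariantly from the square of $\nabla_s + \epsilon\nabla'_s$), and then check that the graded Leibniz rule and graded cyclicity genuinely force all the ``derivative hits $R_s$'' terms to cancel and all $n$ cyclic summands to coincide. None of this is deep, but it is the place where an inconsistent convention on $\Om^\bullet(M;\End(E))$ would silently break the formula, so I would fix those conventions at the outset and carry them uniformly through both computations.
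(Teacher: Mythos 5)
Your argument is correct and complete. Note, however, that the paper does not actually prove Proposition~\ref{prop:dCS}: it is stated with a citation to \cite{CS} and \cite{SS}, so there is no in-text proof to compare against. What you have written is precisely the standard transgression argument that those references use --- bring $\nabla'_s$ to the front by graded cyclicity (no signs, since $R_s$ is even) to get $CS(\nabla_s)=\int_0^1 \Tr\big(\nabla'_s\wedge\exp(R_s)\big)\,ds$, then combine $d\,\Tr=\Tr\circ\nabla_s$, the Bianchi identity, and $\tfrac{\partial}{\partial s}R_s=\nabla_s(\nabla'_s)$ to identify $d$ of the integrand with $\tfrac{\partial}{\partial s}Ch(\nabla_s)$, and integrate. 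All three ingredients are used with the correct signs, and the one place where care is genuinely needed (the graded Leibniz rule killing the terms where $\nabla_s$ hits $\exp(R_s)$, and the tracelessness of the graded commutator) is handled correctly. It is worth observing that the paper's own machinery furnishes a second, independent derivation: Theorem~\ref{thm:dBCS} establishes $(d+\iota)BCS(\nabla_s)=BCh(\nabla_1)-BCh(\nabla_0)$ on $LM$ by a direct iterated-integral computation, and restricting to constant loops via Propositions~\ref{prop:restBCS} and~\ref{prop:restBCh} (where $\iota$ dies) recovers exactly the identity of Proposition~\ref{prop:dCS}. Your finite-dimensional computation is in effect the constant-loop shadow of that argument, and is the more elementary of the two.
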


\section{The Bismut-Chern Form on $LM$}\label{SEC:BCh-on-LM}

Recall that the free loop space $LM$ of a smooth manifold $M$ is an infinite dimensional manifold, where the deRham complex  is well defined \cite{H}.
In fact much of this theory is not needed here as the differential forms we construct can all be expressed locally as iterated integrals of differential forms on the finite dimensional manifold $M$. 

The space $LM$ has a natural vector field, given by the circle action, whose induced contraction operator on differential forms is denoted by $\iota$.
Let $\Omega_{S^1}(LM) = \Omega^{\textrm{even}}_{S^1}(LM) \oplus  \Omega^{\textrm{odd}}_{S^1}(LM)$ denote 
 the $\Z_2$-graded differential graded algebra of forms on $LM$ in the kernel of $(d + \iota)^2 = d \iota + \iota d$,  with differential given by $(d+\iota)$. We let $H_{S^1}(LM) = H^{\textrm{even}}_{S^1}(LM) \oplus  H^{\textrm{odd}}_{S^1}(LM)$ denote the cohomology of $\Omega_{S^1}(LM)$ with respect to the differential $(d+\iota)$. Recall that this cohomology group can be computed completely in terms of the cohomology of $M$, see \cite{JP}.

We remark that the results which follow can also be restated in terms of the \emph{periodic complex} which is given by the operator $(d + u \iota)$ on the $\Z$-graded vector space $\Omega(LM)[u,u^{-1}]]$, consisting of Laurent series in $u^{-1}$, where $u$ has degree $2$.

Associated to each connection $\nabla$ on a complex vector bundle $E \to M$, 
there is an even form on the free loopspace $LM$ 
whose restriction to constant loops equals the Chern form $Ch(\nabla)$ of the connection. This result is due to Bismut, and so we refer to this form as the Bismut-Chern form on $LM$, and denote it by $BCh(E, \nabla)$, or $BCh(\nabla)$ if the context is clear. 

In \cite{TWZ} we gave an alternative construction where $BCh(E,\nabla)=\sum_{k\geq 0} Tr (hol_{2k})$ and
$Tr (hol_{2k}) \in \Omega^{2k}_{S^1}(LM)$. We now recall a local description of this. On any  single chart $U$ of $M$, we can write a connection locally as a matrix $A$ of 
$1$-forms, with curvature $R$, and in this case the restriction $Tr(hol^U_{2k})$  of $Tr(hol_{2k}) $ to $LU$ is given by
\begin{equation} \label{eq:BCh^U_{2k}}
Tr (hol^U_{2k}) = Tr \left( \sum_{m \geq k} \, \,  \sum_{1 \leq j_1 < \dots < j_k \leq m} \int_{\Delta^m} X_1(t_1) \cdots X_m(t_m) dt_1 \cdots dt_m \right) ,
\end{equation}
where
\[
X_j (t_j) = \left\{
\begin{array}{rl}
R(t_j) & \text{if  } j \in \{ j_1, \dots , j_k\} \\
\iota A(t_j)  & \text{otherwise}
\end{array} \right.
\]
Here $R(t_j)$ is a $2$-form taking in two vectors at $\gamma(t_j)$ on a loop $\gamma \in U$ , and 
$\iota A(t_j) = A (\gamma'(t_j))$. This defines a differential form on $LU$ since a tangent vector to a loop is a vector field along that loop, and we may evaluate the above expression by inserting the given vector fields at the prescribed times, and integrating.

Note that $Tr(hol_0)$ is the trace of the 
usual holonomy, and heustically $Tr(hol^U_{2k}) $ is given by the same formula for the trace of holonomy except with exactly $k$ copies of the function $\iota A$ replaced by the $2$-form $R$, summed over all possible places. Since the terms $X_j$ are smooth they have bounded values and derivatives, so this series converges for the same reason that holonomy itself converges; it is comparable to an exponential series. This same argument is used to justify the convergence of related series below.

More generally, a global form on $LM$ is defined as follows \cite{TWZ}.
We first remark that if $ \{U_i\}$ is a covering of $M$ then there is an induced covering of $LM$ in the following way. For any $p\in \mathbb N$, and $p$ open sets $\U=(U_{i_1},\dots, U_{i_p})$ from the cover $\{U_i\}$, there is an induced open subset $\NN(p,\U)\subset LM$ given by
\[
\NN(p,\U)=\left\{\gamma\in LM: \left(\gamma\Big|_{\big[\frac{k-1}{p},\frac{k}{p}\big]}\right)\subset U_{i_k}, \forall k=1,\dots,p \right\}.
\]
By the Lebesgue lemma, the collection $\{\NN(p,\U)\}_{p,i_1,\dots,i_p}$ forms an open cover of $LM$.
  
We fix a covering $\{U_i\}$ of $M$ over which we have trivialized $E|_{U_i} \to U_i$, and write the connection locally as a matrix valued $1$-form $A_i$ on $U_i$, with curvature $R_i$.
For a given loop $\gamma \in LM$ we can choose sets $\U = \{U_1, \ldots , U_p\}$ that cover a subdivision of $\gamma$ into 
$p$ the subintervals $[(k-1)/p, k/p]$, using a formula like \eqref{eq:BCh^U_{2k}}  on the open sets $U_j$ together with the transition functions $g_{i,j}: U_i \cap U_j \to Gl(n,\C)$ on overlaps. 
Concretely, we have
\begin{defn} \label{defn:BCh}
For $k \geq 0$, $Tr(hol_{2k}^{(p,\U)}) \in \Omega^{2k}(LM)$ is given by 
\begin{multline} \label{eq:BCh}
Tr(hol_{2k}^{(p,\U)}) \\ = Tr \Bigg (\sum_{n_1,\dots,n_p\geq 0} \quad \sum_{\scriptsize
\begin{matrix}
J\subset S\\
|J| = k
\end{matrix}
}\quad 
g_{i_p,i_1}
\wedge \bigg( \int_{\Delta^{n_1}} X^1_{i_1}\left(\frac {t_1} p\right) \cdots X^{n_1}_{i_1}\left(\frac {t_{n_1}} p\right) dt_1 \cdots dt_{n_1} \bigg)
\\
\wedge  g_{i_1,i_2}
\cdots   g_{i_{p-1},i_p}
\wedge \bigg(
\int_{\Delta^{n_p}} X^1_{i_p}\left(\frac {p- 1 + t_1} p \right) \cdots X^{n_p}_{i_p}\left(\frac {p- 1+t_{n_p}} p\right) dt_1 \cdots dt_{n_p} 
\bigg) \Bigg)
\end{multline}
where $g_{i_{k-1},i_k}$ is evaluated at $\gamma((k-1)/p)$, and the second sum is a sum over all $k$-element index sets $J\subset S$ of the sets
$S= \{(i_r,j): r=1,\dots, p,\text{ and } 1\leq j\leq n_r\}$, and
\[
X^j_i = \left\{
\begin{array}{rl}
R_i & \text{if  } (i,j) \in J\\
\iota A_i & \text{otherwise.}
\end{array} \right.
\] 
\end{defn}

Note that $Tr(hol_{0}^{(p,\U)})$ is precisely the trace of holonomy, and that heuristically $Tr(hol_{2k}^{(p,\U)})$  is this same formula for the trace of holonomy but with $k$ copies of $R$ shuffled throughout.

In \cite{TWZ} it is shown that $Tr(hol_{2k}^{(p,\U)})$ is independent of covering $(p,\U)$ and trivializations of $E \to M$, and
so defines a global form $Tr(hol_{2k})$ on $LM$. The techniques are repeated in Appendix \ref{app:A}.
Moreover, it is shown that these differential forms $Tr(hol_{2k})$ satisfy the fundamental property
\[
d Tr(hol_{2k}) = -\iota_{d/dt} Tr(hol_{2(k+1)}) \quad \quad \textrm{for all} \quad k \geq 0,
\]
where $d/dt$ is the canonical vector field on $LM$ given by rotating the circle. The Bismut-Chern form is then given by 
\[
BCh(\nabla)= \sum_{k \geq 0} Tr(hol_{2k}) \in \Om^{even}_{S^1}(LM),
\]
and it follows from the above that $(d+ \iota) BCh(\nabla) = 0$ and $(d\iota + \iota d) BCh(\nabla) =0$, where we abbreviate $\iota=\iota_{d/dt}$. Therefore, $BCh(\nabla) $ determines a class $[BCh(\nabla)]$ in the equivariant cohomology $H^{even}_{S^1} (LM)$, known as the Bismut-Chern class. It is shown in \cite{Z} that this class is in fact independent of the connection $\nabla$ chosen. 
An independent proof of this fact will  be given in the next section (Corollary \ref{Cor:BChcharacter}), using a lifting of the Chern-Simons form on $M$ to $LM$.

\begin{prop} \label{prop:restBCh}
For any connection $\nabla$ on a complex vector bundle $E \to M$, 
\[
\rho^* BCh(\nabla) = Ch(\nabla)
\]
where $\rho^* :\Om_{S^1} (LM) \to \Om(M)$ is the restriction to constant loops.
\end{prop}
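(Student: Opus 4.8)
The plan is to restrict the explicit local formula \eqref{eq:BCh} (or equivalently the single-chart formula \eqref{eq:BCh^U_{2k}}) to the submanifold of constant loops and identify what survives. A constant loop $\gamma \equiv x$ lies entirely in any chart $U$ containing $x$, so we may work on a single chart and use \eqref{eq:BCh^U_{2k}}; the transition functions $g_{i_{k-1},i_k}$ in the global formula become identities. The key geometric observation is that for a constant loop the velocity vector $\gamma'(t_j)$ vanishes identically, hence every factor of the form $\iota A(t_j) = A(\gamma'(t_j))$ is zero. Therefore, in the sum \eqref{eq:BCh^U_{2k}}, every term containing at least one factor $\iota A$ dies upon restriction, and the only surviving contributions come from terms in which \emph{all} $m$ factors are curvature insertions $R(t_j)$ — which forces $m = k$ and $J = \{1,\dots,k\}$, i.e. exactly one term per $m$.

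Next I would evaluate that surviving term. When $m = k$ and every $X_j = R$, the integrand in \eqref{eq:BCh^U_{2k}} is $R(t_1)\cdots R(t_k)\, dt_1\cdots dt_k$, where each $R(t_j)$ is, on a constant loop at $x$, simply the constant (in $t_j$) curvature $2$-form $R$ at $x$ paired with a pair of tangent vectors at $x$ — so as a form on the (finite-dimensional) space of constant loops, which is just $M$, it is independent of $t_j$ and equals $R \wedge \cdots \wedge R$ ($k$ factors). The iterated integral $\int_{\Delta^k} dt_1\cdots dt_k$ over the standard $k$-simplex contributes the volume $1/k!$. Hence $\rho^*\, Tr(hol_{2k}^U) = Tr\!\left(\tfrac{1}{k!}\, R \wedge \cdots \wedge R\right)$, the degree-$2k$ component of $Ch(\nabla)$ as defined in \eqref{eq:Ch}. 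Summing over $k \geq 0$ and invoking the fact (already noted in the excerpt, via \cite{TWZ}) that the $Tr(hol_{2k})$ glue to a well-defined global form $Tr(hol_{2k})$ on $LM$ — so that restriction to constant loops is chart-independent — gives $\rho^* BCh(\nabla) = \sum_k Tr\!\left(\tfrac{1}{k!} R^{\wedge k}\right) = Tr(\exp R) = Ch(\nabla)$.

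The main obstacle is not conceptual but bookkeeping: one must be careful about what "restriction to constant loops" means for forms built from iterated integrals, namely pulling back along the inclusion $M \hookrightarrow LM$ as constant loops and checking that the iterated-integral form restricts term-by-term as claimed — in particular that the contraction $\iota A(t_j)$ is literally $A$ evaluated on $\gamma'(t_j)$ and hence vanishes, and that no subtle contribution arises from the simplex integration or from the infinite sum over $n_1,\dots,n_p$ in the global formula. A clean way to handle the latter is to first reduce to a single chart (legitimate since the property is local on $M$ and the global form is chart-independent), where the sum over $m \geq k$ in \eqref{eq:BCh^U_{2k}} collapses after restriction to the single term $m=k$, making convergence a non-issue. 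Once the single-chart computation is in place, the global statement follows immediately from the gluing result of \cite{TWZ}.
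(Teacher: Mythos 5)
Your proposal is correct and follows essentially the same route as the paper's proof: restrict the local iterated-integral formula to constant loops, observe that every $\iota A$ factor vanishes there so only the all-curvature terms survive, and identify the simplex volume $1/k!$ with the coefficient in $Tr(\exp R)$. The only cosmetic difference is that the paper handles globality by noting $R$ is a globally defined $\End(E)$-valued form and taking $p=1$, $\U=\{M\}$, whereas you reduce to a single chart and invoke chart-independence; these amount to the same thing.
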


\begin{proof}
Consider the restriction of formula \eqref{eq:BCh}  to $M$, for any $p$ and $\U$ . Since the local forms $\iota A$ vanish on constant loops, the only non-zero integrands are those that contain only $R$. Now, $R$ is globally defined on $M$, as a form with values in $End(E)$, so we may take $p=1$ and $\U = \{M\}$ for the definition of $Tr(hol_{2k})(\nabla)$. In this case, the formula for $Tr(hol_{2k})^{(p,\U)}$ agrees with the Chern form in \eqref{eq:Ch} since $1/n!$ is the volume of the $n$-simplex.
\end{proof}

The following proposition gives the fundamental properties of the Bismut-Chern form with respect to direct sums and tensor products. 
By restricting to constant loops, or instead to degree zero, one obtains the corresponding results which are known to hold for both the ordinary Chern form, and the trace of holonomy, respectively. In fact, we regard the proposition below as a hybridization of these  two deducible facts.

\begin{thm} \label{thm:BChsumtensor}
Let $(E,\nabla) \to M$ and $(\bar E, \bar \nabla) \to M$ be complex vector bundles with connections. 
Let $\nabla \oplus \bar \nabla$ be the induced connections on $E \oplus \bar E \to M$, and 
$\nabla \otimes \bar \nabla := \nabla \otimes Id + Id  \otimes \bar \nabla$ be the induced connection on $E \otimes \bar E \to M$. Then
\[
BCh(\nabla \oplus \bar \nabla) = BCh(\nabla) + BCh(\bar \nabla)
\]
and
\[
BCh(\nabla \ot \bar \nabla) = BCh(\nabla) \wedge BCh(\bar \nabla)
\]
\end{thm}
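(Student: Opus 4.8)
The plan is to verify both identities locally on a loop $\gamma$, using a common covering $(p,\U)$ for both bundles simultaneously, and to reduce each to an essentially algebraic identity about the holonomy-type expressions in Definition \ref{defn:BCh}. For the direct sum, I would trivialize $E$ and $\bar E$ over the same sets $U_{i_1},\dots,U_{i_p}$, so that the connection matrix for $E\oplus\bar E$ is block diagonal, $A_i\oplus\bar A_i$, the curvature is $R_i\oplus\bar R_i$, and the transition functions are $g_{i_{k-1},i_k}\oplus\bar g_{i_{k-1},i_k}$. Since the integrand of $Tr(hol_{2k}^{(p,\U)})$ is a product of such block-diagonal matrices, the product stays block diagonal; taking $Tr$ then splits as the sum of the two block traces, and each block is exactly the defining integrand for $BCh(\nabla)$, respectively $BCh(\bar\nabla)$. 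Summing over $k$ gives the first formula. This step is routine, essentially a matrix bookkeeping argument, once one has arranged the common trivializations.

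For the tensor product, the analogous observation is that the connection matrix for $E\otimes\bar E$ is $A_i\otimes\id+\id\otimes\bar A_i$, the curvature is $R_i\otimes\id+\id\otimes\bar R_i$, and the transition functions are $g_{i_{k-1},i_k}\otimes\bar g_{i_{k-1},i_k}$. One plugs these into \eqref{eq:BCh} and expands: each factor $X^j_i$ (an $\iota A$ or an $R$) becomes a sum of a term living in the first tensor slot and a term in the second; the transition function factors are already pure tensors. After distributing, the iterated integral over $\Delta^{n}$ of a product of sums of tensors reorganizes, using $\Tr(a\otimes b)=\Tr(a)\Tr(b)$, into a product of two iterated integrals — one in each tensor slot — provided the simplicial time variables are correctly matched up. The point is that the integration domain $\Delta^n$ refines into shuffles of two sub-simplices, and the shuffle-product structure of iterated integrals converts the single integral into the product of the two constituent Bismut-Chern integrands. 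The extra $R$-insertions are tracked by the index sets $J$, and the subsets of $J$ distributing to each tensor factor reproduce exactly the double sum defining $BCh(\nabla)\wedge BCh(\bar\nabla)$.

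The main obstacle will be this last combinatorial identity: showing that the iterated integral over $\Delta^{n_1+\dots+n_p}$ appearing in \eqref{eq:BCh} for $E\otimes\bar E$ equals the wedge of the two iterated integrals for $E$ and $\bar E$. This is a manifestation of the classical fact that iterated integrals form a shuffle algebra, i.e. $\bigl(\int_{\Delta} \omega_1\cdots\omega_a\bigr)\wedge\bigl(\int_{\Delta}\eta_1\cdots\eta_b\bigr)=\sum_{\text{shuffles }\sigma}\int_{\Delta}\zeta_{\sigma(1)}\cdots\zeta_{\sigma(a+b)}$, combined with the distribution of tensor slots across positions and of $R$-insertions across the index set $J$. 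I would organize the bookkeeping by first treating a single chart (the formula \eqref{eq:BCh^U_{2k}}), where the statement is cleanest — there the claim is that $\sum_m\sum_{|J|=k}\int_{\Delta^m}\prod X_j$ for the tensor connection, after expanding each $X_j$ and each $R$ into its two slots, reassembles via shuffles into $\bigl(\sum_a\sum_{|J_1|=k_1}\int\prod X\bigr)\wedge\bigl(\sum_b\sum_{|J_2|=k_2}\int\prod \bar X\bigr)$ summed over $k_1+k_2=k$ — and then observe that the transition-function factors, being pure tensors inserted at the subdivision points $\gamma((k-1)/p)$, interleave trivially with this argument. One small check is that the $1/n!$ weights and the simplex volumes are consistent under the shuffle decomposition, which they are precisely because $\mathrm{vol}(\Delta^{a})\cdot\mathrm{vol}(\Delta^{b})$ equals the number of $(a,b)$-shuffles times $\mathrm{vol}(\Delta^{a+b})$. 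Finally, I would note that since both sides are already known to be independent of the covering (by \cite{TWZ}, recalled in Appendix \ref{app:A}), it suffices to check the identity for one convenient choice of $(p,\U)$ adapted to $\gamma$.
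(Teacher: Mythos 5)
Your proposal is correct and follows essentially the same route as the paper: block-diagonal bookkeeping plus additivity of trace for the direct sum, and for the tensor product the expansion of each local factor into its two tensor slots followed by the shuffle identity, which the paper phrases as the decomposition $\Delta^m=\bigcup_{m_1+m_2=m}\Delta^{m_1}\times\Delta^{m_2}$ up to lower-dimensional faces, together with $\Tr(a\otimes b)=\Tr(a)\Tr(b)$ and the distribution of the $R$-insertions in $J$ over the two slots. No substantive differences to report.
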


\begin{proof} We may assume that $E$ and $\bar E$ are locally trivialized over a common covering $\{U_i\}$ with transition functions $g_{ij}$ and $h_{ij}$, respectively. If $\nabla$ and $\bar \nabla$ are locally represented by $A_i$ and $B_i$ on $U_i$, then $\nabla \oplus \bar  \nabla$ is locally given by the block matrixes with blocks  $A_i$ and $B_i$. Similarly, this holds for transition functions and curvatures. The result now follows from Definition \ref{defn:BCh}, since block matrices are a subalgebra, and trace is additive along blocks.

For the second statement, it suffices to show that for all $k \geq0$ 
\begin{equation} \label{holtensor1}
Tr\left( hol_{2k}(\nabla \ot \bar \nabla)\right) = \sum_{\tiny\begin{matrix}{i+ j = k}\\{i,j \geq 0}\end{matrix}} Tr(hol_{2i} (\nabla)) \cdot Tr( hol_{2j}(\bar \nabla)) 
\end{equation} 
Note for $k=0$ this is just the well known fact that trace of holonomy is multiplicative.
If we express $\nabla$ and $\bar \nabla$  locally by $A_i$ and $B_i$ on $U_i$, then
$\nabla \ot \bar \nabla$ is locally given by $A_i \ot Id + Id \ot B_i$. Similarly,
the curvature is $R_i \ot Id + Id \ot S_i$, if $R_i$ and $S_i$ are the curvatures of $A_i$ and $B_i$, respectively.

We calculate $Tr\left( hol_{2k}(\nabla \ot \bar \nabla)\right) $ directly from Definition \ref{defn:BCh} using coordinate transition functions $g_{ij} \ot h_{ij}$:
\begin{multline*}
Tr \Bigg (\sum_{n_1,\dots,n_p\geq 0} \quad \sum_{\scriptsize
\begin{matrix}
J\subset S\\
|J| = k
\end{matrix}
}\quad g_{i_p,i_1} \ot h_{i_p,i_1}
\\
\wedge \bigg( \int_{\Delta^{n_1}} X^1_{i_1}\left(\frac {t_1} p\right) \cdots X^{n_1}_{i_1}\left(\frac {t_{n_1}} p\right) dt_1 \cdots dt_{n_1} \bigg)
\wedge  g_{i_1,i_2} \ot h_{i_1,i_2}
\\
\cdots   g_{i_{p-1},i_p} \ot h_{i_{p-1},i_p}
\wedge \bigg(
\int_{\Delta^{n_p}} X^1_{i_p}\left(\frac {p- 1 + t_1} p \right) \cdots X^{n_p}_{i_p}\left(\frac {p- 1+t_{n_p}} p\right) dt_1 \cdots dt_{n_p} 
\bigg) \Bigg)
\end{multline*}
where $g_{i_{k-1},i_k}$ is evaluated at $\gamma((k-1)/p)$, and the second sum is a sum over all $k$-element index sets $J\subset S$ of the sets
$S= \{(i_r,j): r=1,\dots, p,\text{ and } 1\leq j\leq n_r\}$, and
\[
X^j_i = \left\{
\begin{array}{rl}
R_i \ot Id + Id \ot S_i & \text{if  } (i,j) \in J\\
\iota A_i \ot Id + Id \ot \iota B_i & \text{otherwise.}
\end{array} \right.
\]

On each neighborhood $U_i$ above, for each choice of $m=n_j$ and $\ell 
\leq m$, we can apply the fact that
\[
 \sum_{\tiny\begin{matrix}{K \subset S_m}\\{|K|=\ell}\end{matrix}} X^1\left( t_1 \right) \cdots X^m \left( t_m \right) 
\quad \textrm{where} \quad
X^i = \left\{
\begin{array}{rl}
R \ot Id + Id \ot S & \text{if  } i \in K\\
\iota A \ot Id + Id \ot \iota B & \text{otherwise.}
\end{array} \right.
\]
for $S_m = \{1, \ldots , m\}$, is equal to 
\[
= \sum_{m_1+m_2=m} \sum_{\tiny\begin{matrix}{T_{m_1} \subset S_m}\\{ |T_{m_1} | = m_1} \end{matrix}} 
\sum_{\tiny\begin{matrix}{K_1 \subset T_{m_1} , K_2 \subset S_m- T_{m_1}}\\{|K_1| + |K_2| = \ell}\end{matrix}} \left(
Y^{\alpha_1} \cdots Y^{\alpha_{m_1}} \right) \ot \left(Y^{\beta_1} \cdots Y^{\beta_{m_2}}  \right) 
\]
where 
\[
Y^{\alpha_i} = \left\{
\begin{array}{rl}
R(t_{\alpha_i}) & \text{if  } \alpha_i \in K_1 \\
\iota A (t_{\alpha_i})& \alpha_i \in T_{m_1} - K_1
\end{array} \right.
\quad \quad
Y^{\beta_i} = \left\{
\begin{array}{rl}
S(t_{\beta_i}) & \text{if  } \beta_i \in K_2 \\
\iota B (t_{\beta_i})& \beta_i \in (S_m-T_{m_1}) - K_2
\end{array} \right.
\]
Now, integrating this expression over $\Delta^m$ and combining this integral with the sum over $T_{m_1}\subset S_m$ with $|T_{m_1}|=m_1$, we see that this becomes an integral over $\bigcup_{\tiny\begin{matrix}{T_{m_1} \subset S_m}\\{ |T_{m_1} | = m_1} \end{matrix}}  \Delta^m=\Delta^{m_1}\times \Delta^{m_2}$, where for $T_{m_1}=\{\alpha_1<\dots<\alpha_{m_1}\}$ and $S_m-T_{m_1}=\{\beta_1<\dots <\beta_{m_2}\}$ we use the inclusion $\Delta^m\hookrightarrow \Delta^{m_1}\times \Delta^{m_2}, (t_1\leq\dots \leq t_m)\mapsto ((t_{\alpha_1}\leq \dots\leq t_{\alpha_{m_1} }),(t_{\beta_1}\leq \dots\leq t_{\beta_{m_2} }))$ and we use the fact that these inclusions only intersect on lower dimensional faces. We therefore see that 
\begin{multline*}
\int_{\Delta^m} \sum_{\tiny\begin{matrix}{K \subset S_m}\\{|K|=\ell}\end{matrix}} X^1\left( t_1 \right) \cdots X^m \left( t_m \right) \\ = 
\sum_{m_1+m_2=m} \,\,\,
\sum_{\tiny\begin{matrix}{K_1 \subset S_{m_1} , K_2 \subset S_{m_2} }\\{|K_1| + |K_2| = \ell}\end{matrix}} \left(
\int_{\Delta^{m_1}}  Y^{1} \cdots Y^{m_1} \right) \ot \left( \int_{\Delta^{m_2}} Z^{1} \cdots Z^{m_2}  \right) 
\end{multline*}
where
\[
Y^{i} = \left\{
\begin{array}{rl}
R(t_{i}) & \text{if  } i \in K_1 \\
\iota A (t_{i})& i \in S_{m_1} - K_1
\end{array} \right.
\quad \quad
Z^{i} = \left\{
\begin{array}{rl}
S(t_{i}) & \text{if  } i \in K_2 \\
\iota B (t_i)& i \in S_{m_2} - K_2
\end{array} \right.
\]
By multi-linearity, this shows
\[
hol_{2k}(\nabla \ot \bar \nabla) \\ = \sum_{\tiny\begin{matrix}{i+ j = k}\\{i,j \geq 0}\end{matrix}} hol_{2i} (\nabla) \ot  hol_{2j}(\bar \nabla)
\]
Then \eqref{holtensor1} follows by taking trace of both sides, since $Tr(X \ot Y) = Tr(X)Tr(Y)$.
\end{proof}

\section{The Bismut-Chern-Simons Form on $LM$}\label{SEC:CS-on-LM}

Using a similar setup and collection of ideas as in the previous section, we construct for each path of connections on a complex 
vector bundle $E \to M$, an odd form on $LM$ which interpolates between the two Bismut-Chern forms of the endpoints of the path. Similarly to the presentation for $BCh$ above, we begin with a local discussion. 

Let $A_s$ with $s\in [0,1]$ be a path of connections on a single chart $U$ of $M$, with curvature $R_s$.
We let $A'_s=\frac{\partial A_s}{\partial s}$ and $R'_s=\frac{\partial R_s}{\partial s}$. For each $k \geq 0$, we define the following degree $2k+1$ differential form on $LU$, 
\begin{multline} \label{eq:BCS^U_{2k+1}}
BCS^{U}_{2k+1} (A_s) = Tr\Bigg( \sum_{n\geq k+1}\,\,\sum_{1\leq j_1<\dots<j_k\leq n}\,\,\sum_{\tiny\begin{matrix}{r=1}\\{r\neq j_1,\dots,j_k}\end{matrix}}^n
\\
 \int_0^1 \int_{\Delta^n}
 \iota A_s(t_1)\dots R_s(t_{j_1})
 \dots A'_s(t_r)\dots R_s (t_{j_k})\dots \iota A_s(t_n) \quad dt_1\dots dt_n ds 
  \Bigg)
 \end{multline}
 Here there is exactly one $A'_s$ at $t_r$, and there are exactly $k$ wedge products of $R_s$ at positions $t_{j_1},\dots,t_{j_k}\neq t_r$, and the remaining factors are $\iota A_s$. 
Heuristically, \eqref{eq:BCS^U_{2k+1}} is similar to \eqref{eq:BCh^U_{2k}}, except there is exactly one $A'_s$, summed over all possible times $t_r$, and integrated over $s=0$ to $s=1$. This formula can be understood in terms of iterated integrals, just as $BCh(\nabla)$ was understood in \cite{GJP} and \cite{TWZ}. 
 It is evident that the restriction of this form to $U$ equals the degree $2k+1$ part of the Chern-Simons form on $U$ since $\iota A$ vanishes on  constant loops, and the volume of the $n$-simplex is $1/n!$.
 
 More generally, we define an odd form on $LM$ as follows.
 Let $ \{U_i\}$ be a covering of $M$ over which we have trivialized $E|_{U_i} \to U_i$, with the connection given locally as a matrix valued $1$-form $A_i$ on $U_i$, with curvature $R_i$. Let $\{\NN(p,\U)\}_{p,i_1,\dots,i_p}$ be the induced cover of $LM$, as in the previous  section. For a given loop $\gamma \in LM$, we can choose sets $\U = \{U_1, \ldots , U_p\}$ that cover a subdivision of $\gamma$ into $p$ subintervals, and then use a formula like \eqref{eq:BCS^U_{2k+1}} on
the open sets $U_i$, and multiply these together (in order) by the transition functions $g_{i,j}: U_i \cap U_j \to Gl(n,\C)$.
Concretely, we have

\begin{defn} \label{defn:BCS}
Let $E \to M$ be a complex vector bundle. Let $\nabla_s$ be a path of connections on $E \to M$, and let $\U = \{U_i\}$ be a covering of $M$, with local trivializations of $E|_{U_i} \to U_i$. For these trivializations we write $\nabla_s$ locally as $A_{s,i}$ on $U_i$, with curvature $R_{s,i}$.
As before, we let $A'_{s,i}=\frac{\partial A_{s,i}}{\partial s}$, and $R'_{s,i}=\frac{\partial R_{s,i}}{\partial s}$.

For each $k \geq 0$, we define the following degree $2k+1$ differential form on $LM$, 
\begin{multline} \label{eq:BCS}
BCS^{(p,\U)}_{2k+1} 
= Tr \Bigg ( \int_0^1 \sum_{n_1,\dots,n_p\geq 0} \quad \sum_{\scriptsize
\begin{matrix}
J\subset S, |J| = k \\
(i_q,m) \in S - J
\end{matrix}
}\quad g_{i_p,i_1} 
\\
\wedge \bigg( \int_{\Delta^{n_1}} X^1_{s, i_1}\left(\frac {t_1} p\right) \cdots X^{n_1}_{s, i_1}\left(\frac {t_{n_1}} p\right) dt_1 \cdots dt_{n_1} \bigg)
\wedge  g_{i_1,i_2}
\\\cdots   g_{i_{p-1},i_p}
\wedge \bigg(
\int_{\Delta^{n_p}} X^1_{s, i_p}\left(\frac {p- 1 + t_1} p \right) \cdots X^{ n_p}_{s,i_p}\left(\frac {p- 1+t_{n_p}} p\right) dt_1 \cdots dt_{n_p} 
\bigg) ds \Bigg)
\end{multline}
where $g_{i_{k-1},i_k}$ is evaluated at $\gamma((k-1)/p)$, and the second sum is a sum over all $k$-element index sets $J\subset S$ of the sets
$S= \{(i_r,j): r=1,\dots, p,\text{ and } 1\leq j\leq n_r\}$, and singleton $(i_q,m) \in S - J$, and
\[
X^j_{s,i} = \left\{
\begin{array}{rl}
R_{s,i} & \text{if  } (i,j) \in J\\
A'_{s,i} & \text{if  } (i,j) = (i_q,m) \\
\iota A_{s,i} & \text{otherwise.}
\end{array} \right.
\]

Furthermore, we define the 
\emph{Bismut-Chern-Simons form}, associated to the choice $(p,\U)$, as
\[
BCS^{(p,\U)}(\nabla_s):=\sum_{k\geq 0} BCS^{(p,\U)}_{2k+1} \quad \in \Omega^{odd}(LM).
\]
\end{defn}

Heuristically, \eqref{eq:BCS} is much like formula \eqref{eq:BCh} for $BCh(\nabla_s)$, but with one copy of $A'_s$ shuffled throughout, and integrated over $s=0$ to $s=1$.

In appendix \ref{app:A} we show that $BCS_{2k+1}^{(p,\U)}$ is independent of subdivision integer $p$, and covering $\U$ of local trivializations of $E \to M$, and so it defines a global form $BCS_{2k+1}(\nabla_s)$ on $LM$. Hence, the total form 
\[
BCS(\nabla_s):=\sum_{k\geq 0} BCS_{2k+1}(\nabla_s) \quad \in \Omega^{odd}(LM)
\]
is also well defined. This form respects composition of paths of connections on $E\to M$, in the sense that for two paths of connections $\nabla_s$ and $\bar\nabla_s$ with $\nabla_1=\bar\nabla_0$, we have
\begin{equation}\label{EQU:BCS-composition}
 BCS(\bar\nabla_s\circ\nabla_s)=BCS(\bar\nabla_s)+BCS(\nabla_s),
\end{equation}
since the integral for $BCS(\nabla_s\circ\nabla_s)$ breaks into a sum of two integrals. It furthermore satisfies the following property.
  \begin{prop} \label{prop:restBCS}
 For any path $\nabla_s$ of connections on a complex vector bundle $E \to M$, the restriction of the Bismut-Chern-Simons form on $LM$ to $M$ equals the  Chern-Simons form,
\[
\rho^* BCS(\nabla_s) = CS(\nabla_s),
\]
where $\rho^* :\Om_{S^1} (LM) \to \Om(M)$ is the restriction to constant loops.
\end{prop}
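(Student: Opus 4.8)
The plan is to reduce the computation of $\rho^* BCS(\nabla_s)$ to the local formula \eqref{eq:BCS^U_{2k+1}} and then observe, as was already noted right after that formula, that restriction to constant loops kills every term containing a factor of $\iota A$. First I would recall that restriction to constant loops is a morphism of complexes $\rho^* : \Omega_{S^1}(LM) \to \Omega(M)$, and that $BCS(\nabla_s) = \sum_{k \ge 0} BCS_{2k+1}(\nabla_s)$ is globally well-defined (appendix \ref{app:A}), so it suffices to compute $\rho^* BCS_{2k+1}(\nabla_s)$ degree by degree. Because the form is independent of the choice $(p,\U)$, I may evaluate it using a convenient choice. The key point is that $R_{s,i}$, and hence $R_s$, is the local expression of the globally defined $\mathrm{End}(E)$-valued curvature $2$-form, and $A'_{s,i} = \partial_s A_{s,i}$ is the local expression of the globally defined $\Omega^1(M;\mathrm{End}(E))$-valued $1$-form $\nabla'_s$; the only locally-defined (non-tensorial) ingredients are the $\iota A_{s,i}$ factors and the transition functions $g_{i,j}$.

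The main step is then the choice $p = 1$ and $\U = \{M\}$ — legitimate because a constant loop lies entirely in any single chart containing its image, and the global form does not depend on the subdivision. With $p=1$ the transition-function factors $g_{i_p,i_1}, \dots$ all become the identity (they only appear on genuine overlaps across the $p$ subintervals), and the formula \eqref{eq:BCS} collapses to the single-chart formula \eqref{eq:BCS^U_{2k+1}} with $U$ replaced by $M$, $A$ by the (still only locally-defined in general, but now globally meaningful on the restriction) data. Upon restricting to constant loops, every $\iota A_s(t_j) = A_s(\gamma'(t_j))$ vanishes since $\gamma' \equiv 0$, so the only surviving terms in \eqref{eq:BCS^U_{2k+1}} are those with $n = k+1$: exactly one $A'_s$ (at some $t_r$) and exactly $k$ curvature factors $R_s$, with no $\iota A_s$ at all. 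The iterated integral $\int_{\Delta^{k+1}} dt_1 \cdots dt_{k+1}$ of the constant integrand over the $(k+1)$-simplex contributes the factor $1/(k+1)!$. Summing over the $k+1$ cyclic placements of $A'_s$ among the $k$ copies of $R_s$, and taking trace, this reproduces exactly the degree-$(2k+1)$ part of \eqref{eq:CS}: the inner sum $\sum_{i=1}^{n}$ over the position of $\nabla'_s$ with $n = k+1$, the normalization $\frac{1}{n!} = \frac{1}{(k+1)!}$, and the $s$-integral $\int_0^1 \cdots ds$. Summing over $k \ge 0$ gives $\rho^* BCS(\nabla_s) = CS(\nabla_s)$.

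I do not anticipate a serious obstacle, but the one point requiring care is the bookkeeping of which factors in \eqref{eq:BCS} survive restriction and that the combinatorics match \eqref{eq:CS} exactly. In \eqref{eq:BCS} the placement of the single $A'_s$ is recorded by the singleton $(i_q, m) \in S - J$ while the $k$ curvatures are recorded by $J$; after setting $p=1$ and dropping all $\iota A$-terms one is left with $n_1 = k+1$ slots of which one holds $A'_s$ and $k$ hold $R_s$, matched against the index $i$ in $\sum_{i=1}^n$ and the $k$ remaining slots of $R_s$ in \eqref{eq:CS} — these are visibly the same sum. One should also double-check the orientation/ordering conventions on $\Delta^{k+1}$ so that "$\frac{1}{n!}$ is the volume of the $n$-simplex" is applied with the correct normalization, exactly as in the proof of Proposition \ref{prop:restBCh}; indeed the argument here is the direct analogue of that proof, with the Chern form replaced by the Chern-Simons form and one extra $\partial_s$-derivative inserted and integrated over $s \in [0,1]$. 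Finally, one remarks that the restriction map is well-defined on $\Omega_{S^1}(LM)$ and compatible with taking the total sum over $k$, so no convergence issue arises since on a fixed finite-dimensional $M$ only finitely many degrees are nonzero.
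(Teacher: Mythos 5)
Your proposal is correct and follows essentially the same route as the paper's own proof: restrict to constant loops so that every $\iota A$ factor vanishes, note that the surviving ingredients $R_s$ and $A'_s=\nabla'_s$ are globally defined $\End(E)$-valued forms so one may take $p=1$ and $\U=\{M\}$, and then match the surviving $n=k+1$ terms with the degree-$(2k+1)$ part of \eqref{eq:CS} using the fact that $1/n!$ is the volume of the $n$-simplex. The only cosmetic quibble is that the $k+1$ placements of $A'_s$ are not ``cyclic'' placements, just the ordinary choices of position, but this does not affect the argument.
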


\begin{proof} Consider the restriction of formula \eqref{eq:BCS}  to $M$, for any $p$ and $\U$ . Since the local forms $\iota A$ vanish on constant loops, the only non-zero integrands are those that contain only $R_s$ and $A'_s$. Now, $R_s$ is globally defined on $M$, as a form with values in $End(E)$, and $A'_s$ is a globally defined $1$-form on $M$, so we may take $p=1$ and $\U = \{M\}$ for the definition of $BCS_{2k+1}(\nabla_s)$. In this case, the formula for $BCS_{2k+1}^{(p,\U)}$ agrees with
the Chern-Simons form in \eqref{eq:CS} since $1/n!$ is the volume of the $n$-simplex.
\end{proof}

The fundamental homotopy formula relating the Bismut-Chern-Simons form and Bismut-Chern forms is the following.

\begin{thm} \label{thm:dBCS} Let $\nabla_s$ be a path of connections on $E \to M$. Then 
\[
(d+\iota)(BCS(\nabla_s))=BCh(\nabla_1)-BCh(\nabla_0).
\]
\end{thm}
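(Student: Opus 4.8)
The plan is to reduce the identity to a computation on a single chart and then carry out a Stokes-type calculation on iterated integrals, in the spirit of the classical proof of Proposition~\ref{prop:dCS} and of the proof in \cite{TWZ} that $d\,Tr(hol_{2k}) = -\iota\,Tr(hol_{2(k+1)})$. Conceptually I would write $BCh(\nabla_1) - BCh(\nabla_0) = \int_0^1 \frac{\partial}{\partial s} BCh(\nabla_s)\,ds$; since $BCS(\nabla_s)$ is by construction the integral over $s\in[0,1]$ of an iterated-integral form $\omega_s\in\Omega^{odd}(LM)$, and since $d$ and $\iota$ act in the $LM$-directions and hence commute with $\frac{\partial}{\partial s}$, it suffices to establish the \emph{transgression identity} $(d+\iota)\omega_s = \frac{\partial}{\partial s} BCh(\nabla_s)$ for each fixed $s$ and then integrate in $s$. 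As $d$, $\iota$, $BCh$, $BCS$ are all $\R$-linear and globally defined and the sets $\NN(p,\U)$ cover $LM$, this may be verified on each $\NN(p,\U)$; the bookkeeping showing that the $dg_{i_k,i_{k+1}}$ contributions combine correctly across adjacent blocks mirrors that in Appendix~\ref{app:A}, so I would dispose of this first and reduce to the case $p=1$, $\U=\{U\}$ of a single path $A_s$ of connection matrices with curvature $R_s$.

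On such a chart I would compute $d\,BCS^U_{2k+1}(A_s)$ and $\iota_{d/dt}\,BCS^U_{2k+1}(A_s)$ separately, using Chen's formulas for the exterior derivative and contraction of an iterated integral $\int_{\Delta^n} X_1(t_1)\cdots X_n(t_n)$. Applying $d$ yields the internal terms $\int_{\Delta^n}X_1\cdots(dX_j)\cdots X_n$, the face terms from $t_j=t_{j+1}$ which merge $X_j\wedge X_{j+1}$ into one slot, and the extreme-face terms from $t_1=0$ and $t_n=1$, which on a loop are evaluated at the common basepoint. The three structural inputs are the structure equation $R_s=dA_s+A_s\wedge A_s$, which governs how $d$ acts on the $\iota A_s$-slots; the Bianchi identity $dR_s=R_sA_s-A_sR_s$; and the variational identity $dA'_s=R'_s-A'_s\wedge A_s-A_s\wedge A'_s$, i.e.\ $R'_s=\nabla_s(A'_s)$, which converts the unique $A'_s$-slot into an $R'_s$-slot plus correction terms. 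Contraction by the loop rotation field $d/dt$ annihilates the function-slots $\iota A_s$ and sends $R_s(t_j)$ to $R_s(\gamma'(t_j),-)$, producing merging and evaluation contributions of the same shape as those coming from $d$.

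The heart of the argument is then the combinatorial reassembly. After the substitutions, the $R'_s$-slot terms should, upon recombining the sums over $n$ and over positions, be precisely $\frac{\partial}{\partial s}$ of the iterated-integral formula \eqref{eq:BCh} for $BCh(\nabla_s)$, since differentiating that formula in $s$ replaces, summed over all positions, one $R_s$ by $R'_s$ or one $\iota A_s$ by $\iota A'_s$; integrating in $s$ then produces the right-hand side. Everything else must cancel: the internal $dR_s$ terms against each other via Bianchi and the vanishing of the trace of a commutator; the correction terms $A'_sA_s$ and $A_sA'_s$ against the $A_s\wedge A_s$ pieces of the structure equation and against the face-merging terms; and the $d$-endpoint terms against the $\iota$-contraction terms, the last cancellation relying precisely on $t=0$ and $t=1$ being the same point of the loop --- the same mechanism that makes $BCh(\nabla)$ equivariantly closed. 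I expect the main obstacle to be exactly this bookkeeping: tracking all signs and the way the three summations (over the degree index $k$, over the length $n$ of each iterated integral, and over the positions of the $k$ copies of $R_s$ and the single $A'_s$, with cross-degree interaction between $BCS_{2k+1}$ and $BCS_{2k\pm1}$) interlock. Once the local identity $\sum_{k\geq0}(d+\iota)BCS^U_{2k+1} = BCh^U(\nabla_1)-BCh^U(\nabla_0)$ is in hand, summing over $k$ and reinstating the cover finishes the proof.
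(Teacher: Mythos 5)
Your proposal follows essentially the same route as the paper's proof: reduce to the transgression identity at fixed $s$ on a single chart, compute $(d+\iota)$ on the iterated integrals using the structure equation, the Bianchi identity, and $R'_s = dA'_s + A'_sA_s + A_sA'_s$, observe that the surviving $R'_s$ and $\iota A'_s$ terms assemble into $\frac{\partial}{\partial s}$ of the $BCh$ integrand while the endpoint contributions cancel because $A_s(0)=A_s(1)$ on a loop (and the residual commutator dies under the trace), and finally handle the transition functions via $dg_{ij}=A_ig_{ij}+g_{ij}A_j$. All the key cancellation mechanisms you identify, including the cross-degree cancellation between $dI_{2k+1}$ and $\iota I_{2k+3}$, are exactly those used in the paper.
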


\begin{proof} We'll first give the proof for the local expressions in \eqref{eq:BCS^U_{2k+1}} and \eqref{eq:BCh^U_{2k}}, and then 
explain how the same argument applies to the general global expressions \eqref{eq:BCS} and \eqref{eq:BCh}.
Let
\begin{multline*}
I_{2k+1}=
\sum_{n\geq k+1}\Bigg(
\sum_{1\leq j_1<\dots<j_k\leq n} \,\,
\sum_{\tiny\begin{matrix}r=1 \\ r\neq j_1,\dots, j_k \end{matrix}}^n
\\ \int_{\Delta^n}
 \iota A_s(t_1)\dots R_s(t_{j_1})  \dots A'_s(t_r)\dots R_s (t_{j_k})\dots \iota A_s(t_n)dt_1\dots dt_n\Bigg)
 \end{multline*}
be the integrand appearing in \eqref{eq:BCS^U_{2k+1}} so that 
\[
BCS^U_{2k+1}(\nabla_s)= Tr \int_0^1 I_{2k+1} ds 
\]
We first show that for each $s$ we have
\begin{multline} \label{d+[A_s,-]+i I_{2k+1}}
(d + \iota+[A_s(0),-]) \Bigg( \sum_{k \geq 0}  I_{2k+1} \Bigg) 
 =\sum_{k\geq 0}\Bigg(\sum_{n\geq k}
 \\
 \sum_{1\leq j_1<\dots<j_{k}\leq n}
  \int_{\Delta^n}\frac{\partial}{\partial s}\Bigg( \iota A_s (t_1)\dots R_s(t_{j_1})\dots R_s(t_{j_{k}})\dots \iota A_s(t_n) \Bigg) \,\, dt_1\dots dt_n  \Bigg)
 \end{multline}
The statement of the theorem (for the local case) will then follow from this by taking trace of both sides, integrating from $s=0$ to $s=1$, and using the fundamental theorem of calculus. Note also, that taking the bracket with $A_s(0)$ vanishes when taking the trace.

To prove \eqref{d+[A_s,-]+i I_{2k+1}} we evaluate $\frac{\partial}{\partial s}$ on the right-hand side of \eqref{d+[A_s,-]+i I_{2k+1}} as
\[
  \sum_{k\geq 0}   \,\,\sum_{n\geq k} \,\,
\sum_{1\leq j_1<\dots<j_{k}\leq n} \,\,
  \int_{\Delta^n}\Big(\omega+\eta\Big)\,\, dt_1\dots dt_n  
\]
where
\begin{eqnarray}\label{term-one}
\omega &=& \sum_{\tiny\begin{matrix}{ \ell=1}\\{\ell \neq j_1,\dots,j_{k}}\end{matrix}}^n 
 \iota A_s (t_1)\dots R_s(t_{j_1}) \dots \iota A'_s(t_\ell) \dots R_s(t_{j_{k}})\dots \iota A_s(t_n),
 \\ \label{term-two}
 \eta&=&\sum_{i=1}^k   \,\,
 \iota A_s (t_1)\dots R_s(t_{j_1})\dots R'_s(t_{j_i}) \dots R_s(t_{j_{k}})\dots \iota A_s(t_n).  
\end{eqnarray}
Thus, we need to show that the left-hand side of \eqref{d+[A_s,-]+i I_{2k+1}} consists of exactly the two kinds of terms given in \eqref{term-one} and \eqref{term-two}.

For the left-hand side of \eqref{d+[A_s,-]+i I_{2k+1}}, we first apply $\iota$ to $\sum_{k\geq 0} I_{2k+1}$. Since $\iota$ acts as a derivation and $\iota^2 = 0$, we have $\iota \iota A_s = 0$, so that we only obtain terms with exactly one $\iota R_s$ or $\iota A'_s$, \emph{i.e.} we get the following integrands (suppressing the variables $t_i$ for better readability):
\begin{eqnarray}
\label{iota-one}
&&  \pm \iota A_s \dots R_s \dots \iota R_s \dots A'_s \dots R_s \dots \iota A_s
\\
\label{iota-two}
&& \,\,\,\,\,\, \iota A_s \dots R_s \dots \dots \dots \iota A'_s \dots R_s \dots \iota A_s
\end{eqnarray}
In \eqref{iota-one} the factor $\iota R_s$ may appear anywhere in this product; in particular it may appear before the factor $A'_s$ or after that factor. Since $\iota A_s$ and $R_s$ are even, and $A'_s$ is odd, the sign ``$\pm$'' in \eqref{iota-one} is ``$+$'' if $\iota R_s$ appears before $A'_s$, and ``$-$'' if $\iota R_s$ appears after $A'_s$. Note, that \eqref{iota-two} is precisely the term \eqref{term-one} on the right-hand side of \eqref{d+[A_s,-]+i I_{2k+1}}.

Next, we apply the derivation $d$ to $\sum_{k\geq 0} I_{2k+1}$. We now obtain terms containing exactly one $d\iota A_s$, $d R_s$, or $dA'_s$, \emph{i.e.} (suppressing again the variables $t_i$):
\begin{eqnarray}
\label{d-one}
&&  \pm \iota A_s \dots R_s \dots d \iota A_s \dots A'_s \dots R_s \dots \iota A_s
\\
\label{d-two}
&&  \pm \iota A_s \dots R_s \dots d R_s \dots A'_s \dots R_s \dots \iota A_s
\\
\label{d-three}
&& \,\,\,\,\,\, \iota A_s \dots R_s \dots \dots \dots d A'_s \dots R_s \dots \iota A_s
\end{eqnarray}
Again, the sign is ``$+$'' if the $d$ term appears before $A'_s$, and ``$-$'' otherwise.
To evaluate \eqref{d-one}, we use the relation
\begin{equation}\label{diotaAs}
d(\iota A_s)= [d,\iota] A_s - \iota (d A_s)=\frac{\partial}{\partial t} A_s - \iota(d A_s).
\end{equation}
By the fundamental theorem of calculus, the integral over $\frac{\partial}{\partial t} A_s$ is given by evaluation at the endpoints of integration, \emph{i.e.} $\int_{t_{i-1}}^{t_{i+1}} \frac{\partial}{\partial t_i} A_s(t_i) \, dt_i=A_s(t_{i+1})-A_s(t_{i-1})$. Thus the variable $t_i$ has been removed, and either $A_s$ is being multiplied to its adjacent term on the right, or $(-A_s)$ is being multiplied to its adjacent term on the left. This can be further analyzed by considering the following four cases.
\begin{enumerate}
\item
 If $d\iota A_s$ is the first or last factor in a summand of $I_{2k+1}$, we obtain terms $-A_s(0)$ and $-A_s(1)$ from the evaluation at the endpoints. These two terms are precisely $-A_s(0) I_{2k+1}  - I_{2k+1} A_s(1)=-[A_s(0),I_{2k+1}]$ since $A_s(0) = A_s(1)$. Thus, this cancels with the bracket $[A_s(0),-]$ in \eqref{d+[A_s,-]+i I_{2k+1}}.
\item 
If $d\iota A_s$ is adjacent to $\iota A_s$, we obtain $ -\iota A_s A_s + A_s \iota A_s=- \iota(A_s \wedge A_s)$ which, when combined with $-\iota (d A_s)$ from \eqref{diotaAs} above, equals $-\iota(dA_s+A_s\wedge A_s)=-\iota R_s$. Each such term appearing in $d I_{2k+1}$ cancels with the corresponding term \eqref{iota-one} coming from $\iota I_{2k+3}$.
\item 
If $d\iota A_s$ is adjacent to $R_s$, we obtain terms $A_s R_s - R_s A_s$, which cancel with the corresponding term \eqref{d-two} in $d I_{2k+1}$ containing $d R_s$, since $dR_s + [A_s , R_s]= 0$ by the Bianchi identity.
\item
Finally, if $d\iota A_s$ is adjacent to $A'_s$, we get terms $A'_s A_s + A_s A'_s$ (both with a ``$+$'' sign, since $d$ has moved across the $1$-form $A'_s$). This combines with $dA'_s$ from \eqref{d-three} to give \eqref{term-two}, since $dA'_s+A'_s A_s + A_s A'_s=(dA_s+A_s\wedge A_s)'=R'_s$.
\end{enumerate}
Thus, we have shown identity \eqref{d+[A_s,-]+i I_{2k+1}}, and with this the claim of the theorem in the local case.

For the general case, using multi-linearity and a similar calculation shows that 
\[
(d+\iota)(BCS(\nabla_s))=BCh(\nabla_1)-BCh(\nabla_0).
\]
 The only new feature comes from the apparent terms $g_{ij}$ in \eqref{eq:BCS}, which are not in \eqref{eq:BCS^U_{2k+1}}.
For these, note that all the terms $ g_{ij} A_j$ and $A_i g_{ij}$ which appear from the fundamental theorem of calculus applied to $\frac{\partial}{\partial t} A_s$, cancel with  $d g_{ij}$, since  $ g_{ij} A_j - A_i g_{ij} = dg_{ij}$.
\end{proof}

\begin{cor}
For any two connections $\nabla_0$ and $\nabla_1$ on a complex vector bundle, the difference $Tr(hol(\nabla_1)) - Tr(hol(\nabla_0))$ of the trace of the holonomies is a function on $LM$ which is given by the contraction of a $1$-form on $LM$.
\end{cor}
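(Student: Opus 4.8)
The plan is to extract the degree-zero and degree-one parts from the homotopy formula of Theorem~\ref{thm:dBCS}. Recall that $BCh(\nabla) = \sum_{k\geq 0} Tr(hol_{2k})$ with $Tr(hol_{2k}) \in \Om^{2k}_{S^1}(LM)$, so $BCh(\nabla_1) - BCh(\nabla_0)$ has no component in odd degree, and its degree-zero component is exactly $Tr(hol_0(\nabla_1)) - Tr(hol_0(\nabla_0)) = Tr(hol(\nabla_1)) - Tr(hol(\nabla_0))$. On the other side, $BCS(\nabla_s) = \sum_{k\geq 0} BCS_{2k+1}(\nabla_s)$ is a sum of forms of odd degree $2k+1$, with the lowest term $BCS_1(\nabla_s) \in \Om^1(LM)$.

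The key step is then to isolate the degree-zero part of the identity $(d+\iota)(BCS(\nabla_s)) = BCh(\nabla_1) - BCh(\nabla_0)$. The operator $d$ raises degree by one and $\iota$ lowers degree by one, so the only contributions to degree zero on the left come from $\iota(BCS_1(\nabla_s))$, since $d$ applied to any $BCS_{2k+1}$ lands in degree $\geq 2$, and $\iota$ applied to $BCS_{2k+1}$ for $k\geq 1$ lands in degree $\geq 2$. Matching degree-zero components therefore yields
\[
\iota\bigl(BCS_1(\nabla_s)\bigr) = Tr(hol(\nabla_1)) - Tr(hol(\nabla_0)).
\]
Since $BCS_1(\nabla_s)$ is a well-defined global $1$-form on $LM$ (by the independence-of-covering statement and Definition~\ref{defn:BCS}), this exhibits $Tr(hol(\nabla_1)) - Tr(hol(\nabla_0))$ as the contraction of the $1$-form $\omega := BCS_1(\nabla_s)$ by the canonical vector field $d/dt$, which is the claim.

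I do not expect a genuine obstacle here: the whole content is packaged into Theorem~\ref{thm:dBCS}, and the corollary is just reading off the appropriate graded piece together with the observation that $BCh$ is concentrated in even degrees. The only point requiring a word of care is to confirm the degree bookkeeping, namely that no other term of $(d+\iota)(BCS(\nabla_s))$ contributes in degree zero --- which follows immediately since $d$ strictly raises degree and $\iota(BCS_{2k+1})$ has degree $2k \geq 2$ for $k\geq 1$. One could alternatively write $\omega$ explicitly from \eqref{eq:BCS} by taking $k=0$ (so $J=\emptyset$, a single $A'_s$, and all remaining factors $\iota A_s$) and integrating over $s$, but this is not needed for the statement.
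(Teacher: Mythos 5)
Your proposal is correct and is exactly the paper's argument: the paper's one-line proof simply names $BCS_1(\nabla_s)$ as the required $1$-form, which amounts to the same degree-zero extraction from Theorem \ref{thm:dBCS} that you carry out explicitly. Your degree bookkeeping fills in precisely the details the paper leaves implicit.
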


\begin{proof}
For any path $\nabla_s$ from $\nabla_1$ to $\nabla_0$, the degree zero part of $(d+i)BCS(\nabla_s)$ is $i (BCS_1)(\nabla_s)$, which is the difference of the traces of the holonomies.
\end{proof}

\begin{cor} \label{Cor:BChcharacter}
For any complex vector bundles $E \to M$, there is a well defined Bismut-Chern class $[BCh(E)] = [BCh(E,\nabla)] \in H^{even}_{S^1}(LM)$, independent of the connection $\nabla$. 
\end{cor}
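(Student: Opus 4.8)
The plan is to derive this immediately from Theorem~\ref{thm:dBCS}. Given two connections $\nabla_0$ and $\nabla_1$ on $E \to M$, the space of connections is affine (modeled on $\Omega^1(M;\End E)$), so the straight-line path $\nabla_s = (1-s)\nabla_0 + s\nabla_1$ is a path of connections from $\nabla_0$ to $\nabla_1$. Apply Theorem~\ref{thm:dBCS} to this path to obtain
\[
(d+\iota)\bigl(BCS(\nabla_s)\bigr) = BCh(\nabla_1) - BCh(\nabla_0),
\]
and since $BCS(\nabla_s) \in \Omega^{odd}(LM)$ is by construction a finite-degree-truncated sum that lies in the kernel of $(d+\iota)^2$ (one should note that $BCS(\nabla_s)$, like $BCh(\nabla)$, satisfies $(d\iota+\iota d)BCS(\nabla_s)=0$, which follows from the local degree-shift identities $d\,Tr(hol_{2k}) = -\iota\,Tr(hol_{2(k+1)})$ together with the analogous relation for the $BCS_{2k+1}$ pieces used in the proof of Theorem~\ref{thm:dBCS}), the form $BCS(\nabla_s)$ represents a class in $\Omega^{odd}_{S^1}(LM)$. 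Hence $BCh(\nabla_1) - BCh(\nabla_0)$ is $(d+\iota)$-exact in the complex $\Omega_{S^1}(LM)$, so $[BCh(\nabla_1)] = [BCh(\nabla_0)]$ in $H^{even}_{S^1}(LM)$.

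Next I would observe that since any two connections are connected by such a path, this shows the class $[BCh(E,\nabla)]$ is independent of $\nabla$; therefore we may unambiguously write $[BCh(E)]$. One small point worth spelling out is well-definedness with respect to isomorphism of bundles: if $\phi : (E,\nabla) \to (E',\nabla')$ is a bundle isomorphism carrying $\nabla$ to $\nabla'$, then the local matrix data transform by conjugation and the trace in Definition~\ref{defn:BCh} is conjugation-invariant, so $BCh(E,\nabla) = BCh(E',\nabla')$ on the nose; combined with the connection-independence just established, $[BCh(E)]$ depends only on the isomorphism class of $E$.

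There is essentially no obstacle here: the entire content has already been carried out in Theorem~\ref{thm:dBCS}, and the corollary is the standard ``difference is exact $\Rightarrow$ class is independent'' argument. The only thing requiring a line of care is confirming that $BCS(\nabla_s)$ genuinely lands in the subcomplex $\Omega_{S^1}(LM)$ (i.e.\ is annihilated by $(d+\iota)^2$) rather than merely in $\Omega^{odd}(LM)$, so that ``exact'' makes sense in the equivariant complex; this is immediate from the bidegree bookkeeping in the proof of Theorem~\ref{thm:dBCS}, where $(d+\iota)$ applied to $\sum_k I_{2k+1}$ was already computed degree by degree. Granting that, the proof is three lines.
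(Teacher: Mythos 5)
Your proof is correct and follows essentially the same route as the paper: join the two connections by a path (the paper only invokes path-connectedness of the space of connections; your straight-line path is the standard instance of this), apply Theorem~\ref{thm:dBCS}, and conclude that the difference of Bismut--Chern forms is $(d+\iota)$-exact in $\Omega_{S^1}(LM)$. The one place where the paper is cleaner is the verification that $BCS(\nabla_s)$ lies in $\ker(d\iota+\iota d)$, which you attribute to ``bidegree bookkeeping'': the paper instead just applies $(d+\iota)$ to the homotopy formula and uses that $BCh(\nabla_0)$ and $BCh(\nabla_1)$ are $(d+\iota)$-closed, so that $(d\iota+\iota d)BCS(\nabla_s)=(d+\iota)\bigl(BCh(\nabla_1)-BCh(\nabla_0)\bigr)=0$, which you may want to adopt in place of your sketch.
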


We remark that this corollary, and also Corollary \ref{cor:BChfromK} below, were first proven by Zamboni using completely different methods in \cite{Z}.

\begin{proof}
First, for any path of connections $\nabla_s$ from $\nabla_1$ to $\nabla_0$, $BCS(\nabla_s)$ is in the kernel of $d \iota + \iota d$ since 
$BCH$ is $(d+i)$-closed:
\[
(d \iota + \iota d) BCS( \nabla_s) = (d+\iota) (BCh(E,\nabla_1)-BCh(E,\nabla_0) ) = 0.
\]
The corollary now follows from Theorem \ref{thm:dBCS} since the space of connections is path connected.
\end{proof}

Let $K^0(M)$ be the even K-theory of complex vector bundles over $M$, i.e. the Grothendieck group associated to the 
semi-group of all complex vector bundles under direct sum. 
Elements in $K^0(M)$ are given by pairs $(E,E')$, thought of as the formal difference $E -E'$.
This is a ring under tensor product.
Using Corollary \ref{Cor:BChcharacter}, Proposition \ref{thm:BChsumtensor},  and Proposition \ref{prop:restBCh}, we have the following:

\begin{cor} \label{cor:BChfromK} There is a well defined ring homomorphism 
\[
[BCh]: K^0(M) \to H^{even}_{S^1}(LM)\
\] 
defined by $(E,\bar E) \mapsto BCh(E)-BCh(\bar E)$. Moreover, the following diagram commutes
\[
\xymatrix{
 &H^{\textrm{even}}_{S^1}(LM) \ar[d]^{\rho^*} \\
 K^0(M)  \ar [ru]^{[BCh]} \ar[r]^{[Ch]} & H^{even} (M) 
}
\]
where $[Ch]: K(M) \to H^{even}(M)$ is the ordinary Chern character to deRham cohomology,  and $\rho^*$ is the restriction to constant loops.
\end{cor}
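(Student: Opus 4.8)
The plan is to build the ring homomorphism $[BCh]$ by first checking that $BCh(E) \in H^{\textrm{even}}_{S^1}(LM)$ is a well-defined invariant of the bundle $E$ (not just of the pair $(E,\nabla)$), then verifying additivity and multiplicativity on the level of isomorphism classes, and finally invoking the universal property of the Grothendieck group. The first point is exactly Corollary \ref{Cor:BChcharacter}: since the space of connections on $E$ is an affine space, hence path connected, any two connections $\nabla_0, \nabla_1$ are joined by a path $\nabla_s$, and Theorem \ref{thm:dBCS} gives $[BCh(E,\nabla_1)] - [BCh(E,\nabla_0)] = [(d+\iota)BCS(\nabla_s)] = 0$ in $H^{\textrm{even}}_{S^1}(LM)$, so I may write $[BCh(E)]$ unambiguously. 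It is also immediate that an isomorphism $E \cong E'$ of bundles carries a connection on $E$ to a connection on $E'$ with the same local expressions up to the isomorphism, so $[BCh(E)] = [BCh(E')]$; this makes $E \mapsto [BCh(E)]$ a well-defined map from isomorphism classes of bundles to $H^{\textrm{even}}_{S^1}(LM)$.

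Next I would assemble the monoid homomorphism. Given bundles $E, \bar E$, pick connections $\nabla, \bar\nabla$ and use $\nabla \oplus \bar\nabla$ on $E \oplus \bar E$ and $\nabla \otimes \bar\nabla$ on $E \otimes \bar E$. Theorem \ref{thm:BChsumtensor} gives, at the level of forms, $BCh(\nabla \oplus \bar\nabla) = BCh(\nabla) + BCh(\bar\nabla)$ and $BCh(\nabla \otimes \bar\nabla) = BCh(\nabla) \wedge BCh(\bar\nabla)$, and passing to classes in $H^{\textrm{even}}_{S^1}(LM)$ yields $[BCh(E \oplus \bar E)] = [BCh(E)] + [BCh(\bar E)]$ and $[BCh(E \otimes \bar E)] = [BCh(E)] \cup [BCh(\bar E)]$, where $\cup$ is the product on $H_{S^1}(LM)$ induced by $\wedge$. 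Thus $E \mapsto [BCh(E)]$ is a homomorphism from the semiring of isomorphism classes of bundles (under $\oplus$ and $\otimes$) to $H^{\textrm{even}}_{S^1}(LM)$. By the universal property of the Grothendieck group, this extends uniquely to a group homomorphism $[BCh]: K(M) \to H^{\textrm{even}}_{S^1}(LM)$, $(E,\bar E) \mapsto [BCh(E)] - [BCh(\bar E)]$; the multiplicativity on representatives just checked, together with bilinearity of $\cup$, shows the extension respects the ring structure (one checks $(E,\bar E)\cdot(F,\bar F) = (E\otimes F \oplus \bar E \otimes \bar F, \, E\otimes \bar F \oplus \bar E \otimes F)$ maps correctly by expanding). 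The unit $(\underline{\C},0)$ maps to $[BCh(\underline{\C})] = 1$, the class of the constant function $1$, since the trivial bundle with trivial connection has holonomy the identity and all higher $Tr(hol_{2k})$ vanish.

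For the commuting triangle, I would simply apply the restriction-to-constant-loops map $\rho^*: \Omega_{S^1}(LM) \to \Omega(M)$, which is a chain map for the respective differentials $(d+\iota)$ and $d$ and is multiplicative, hence descends to a ring map $\rho^*: H^{\textrm{even}}_{S^1}(LM) \to H^{\textrm{even}}(M)$. Proposition \ref{prop:restBCh} gives $\rho^* BCh(E,\nabla) = Ch(\nabla)$ at the level of forms, so on cohomology $\rho^*[BCh(E)] = [Ch(\nabla)] = [Ch](E)$, the ordinary Chern character of $E$; extending $\C$-linearly over the Grothendieck group gives $\rho^* \circ [BCh] = [Ch]$ as ring homomorphisms $K(M) \to H^{\textrm{even}}(M)$, which is the asserted commutativity.

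I do not expect a serious obstacle here: every ingredient has already been established (Theorem \ref{thm:dBCS}, Corollary \ref{Cor:BChcharacter}, Theorem \ref{thm:BChsumtensor}, Proposition \ref{prop:restBCh}), and what remains is the standard bookkeeping of promoting a semiring homomorphism to a ring homomorphism on the Grothendieck group. The only point requiring a little care is confirming that $\rho^*$ is genuinely well defined as a map on equivariant cohomology — i.e. that it sends $\ker(d+\iota)$ into $\ker d$ and $\operatorname{im}(d+\iota)$ into $\operatorname{im} d$ — which follows because $\iota$ vanishes identically on the image of the inclusion of constant loops (the rotation vector field is zero there), so $\rho^*(d+\iota) = d\rho^*$.
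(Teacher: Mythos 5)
Your proposal is correct and follows exactly the route the paper intends: the corollary is stated as a direct consequence of Corollary \ref{Cor:BChcharacter} (well-definedness of the class independent of connection), Theorem \ref{thm:BChsumtensor} (additivity and multiplicativity at the level of forms), and Proposition \ref{prop:restBCh} (compatibility with restriction to constant loops), and your write-up supplies precisely the standard Grothendieck-group bookkeeping the paper leaves implicit. Your closing observation that $\rho^*$ intertwines $d+\iota$ with $d$ because the rotation vector field vanishes on constant loops is the same justification the paper uses elsewhere for treating $\rho^*$ as a chain map.
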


\section{Further properties of the Bismut-Chern-Simons Form}

We now show that, up to $(d+\iota)$-exactness, $BCS(\nabla_s)$ depends only on the endpoints of the path $\nabla_s$.

\begin{prop}    \label{prop:bigon}
Let $\nabla_s^0$ and $\nabla_s^1$, for $0 \leq s \leq 1$ be two paths of connections on a complex vector bundle $E \to M$ 
with the same endpoints, i.e. $\nabla_0^0 = \nabla_0^1$ and $\nabla_1^0 = \nabla_1^1$.
Then 
\[
BCS(\nabla_s^1)-BCS(\nabla_s^0)\in \Omega^{odd}_{exact}(LM),
\]
 i.e. there is an even form $H \in \Omega^{even}_{S^1}(LM)$ such that 
 \[
 (d + \iota)H = BCS(\nabla_s^1)-BCS(\nabla_s^0).
 \]
\end{prop}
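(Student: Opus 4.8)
The plan is to realize the two paths $\nabla^0_s$ and $\nabla^1_s$ as the two ``ends'' of a two-parameter family of connections and then apply a Stokes-type argument to the square. Concretely, since the space of connections on $E$ is affine, it is convex, so I can choose a smooth family $\nabla_{s,u}$, for $(s,u) \in [0,1]\times[0,1]$, with $\nabla_{s,0} = \nabla^0_s$ and $\nabla_{s,1}=\nabla^1_s$; because the two paths share endpoints, I can moreover arrange $\nabla_{0,u}=\nabla^0_0=\nabla^1_0$ and $\nabla_{1,u}=\nabla^0_1=\nabla^1_1$ for all $u$, i.e. the family is constant on the two vertical edges of the square. (A linear interpolation $\nabla_{s,u}=(1-u)\nabla^0_s+u\nabla^1_s$ does exactly this.) The form $H$ will be defined by the same local iterated-integral recipe as $BCS$ in \eqref{eq:BCS^U_{2k+1}}, but integrating over the \emph{two} simplices/parameters: insert exactly one factor $A'_{s,u}:=\tfrac{\partial}{\partial s}A_{s,u}$ and exactly one factor $\dot A_{s,u}:=\tfrac{\partial}{\partial u}A_{s,u}$ (summed over their two distinct slots and over the number and positions of $R_{s,u}$ factors among the remaining $\iota A_{s,u}$'s), then $\int_0^1\!\int_0^1 \cdots\, ds\,du$ and take the trace; globally one patches as in Definition \ref{defn:BCS} using the same transition functions $g_{ij}$. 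This $H$ lies in $\Omega^{\text{even}}_{S^1}(LM)$ for degree reasons (two extra $1$-form insertions turn an odd form even), and the independence-of-$(p,\U)$ argument of Appendix \ref{app:A} applies verbatim.

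The key computation is that $(d+\iota)H$ telescopes. Running the exact same argument as in the proof of Theorem \ref{thm:dBCS} — using $d(\iota A)=\tfrac{\partial}{\partial t}A-\iota(dA)$ and the fundamental theorem of calculus in the loop variable $t$, the Bianchi identity $dR_{s,u}+[A_{s,u},R_{s,u}]=0$, the structure equations $R'_{s,u}=dA'_{s,u}+[A_{s,u},A'_{s,u}]$ and $\dot R_{s,u}=d\dot A_{s,u}+[A_{s,u},\dot A_{s,u}]$, and cancellation of the $\iota R$ terms against one another — collapses $(d+\iota)$ applied to the $t$-integrand to a total $s$-derivative plus a total $u$-derivative of the corresponding $BCh$-type integrand. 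Applying the fundamental theorem of calculus once more in $s$ and in $u$, and using that the family is constant (so $A'$ and $\dot A$ vanish) on the respective edges of the square, every boundary contribution either vanishes or reassembles into $BCS(\nabla^1_s)-BCS(\nabla^0_s)$: the $u$-direction boundary at $u=1$ and $u=0$ gives these two $BCS$ forms, while the $s$-direction boundary at $s=0,1$ contributes nothing because $\dot A_{s,u}=0$ there. The apparent $g_{ij}$ terms in the global formula cancel against $dg_{ij}$ exactly as at the end of the proof of Theorem \ref{thm:dBCS}, using $A_i g_{ij}+g_{ij}A_j=dg_{ij}$.

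I expect the main obstacle to be purely bookkeeping: keeping straight the two distinguished insertions ($A'$ for the $s$-variation, $\dot A$ for the $u$-variation) while running the Theorem \ref{thm:dBCS} cancellation, in particular making sure that the ``adjacent $A'$'' and ``adjacent $\dot A$'' cases of the $d(\iota A)$ computation produce precisely $\iota R'$ and $\iota \dot R$ with correct signs, and that no cross term (a slot adjacent to \emph{both} distinguished factors) is mishandled. A clean way to organize this is to note that $(d+[A,-]+\iota)$ acts as a derivation on the iterated-integral string, so it suffices to verify the formula term-by-term on each factor type; the $[A,-]$ pieces are absorbed by Bianchi/structure equations and the boundary-of-$\Delta^n$ pieces cancel telescopically, exactly as in Theorem \ref{thm:dBCS}. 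Everything else — convexity of the space of connections, the degree count placing $H$ in the even part, and $(d+\iota)$-closedness issues — is immediate.
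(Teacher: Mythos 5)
Your proposal is correct and follows essentially the same route as the paper: both construct a two-parameter family of connections filling the bigon (constant on the vertical edges), define $H$ by the $BCS$-type iterated integral with one $\partial_s A$ and one $\partial_u A$ insertion integrated over the square, and show $(d+\iota)H$ reduces to a total $s$-derivative plus a total $u$-derivative, of which one vanishes on the edges and the other telescopes to $BCS(\nabla^1_s)-BCS(\nabla^0_s)$.
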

\begin{proof} Since the space $S$ of connections on $E$ is simply connected, there is a continuous function $F: [0,1] \times [0,1] \to S$ such that
$F(s,0) = \nabla_s^0$,  and $F(s,1) = \nabla_s^1$ for all $s \in [0,1]$, and $F(0,r) = \nabla_0^0 = \nabla_0^1$ and $F(1,r) = \nabla_1^0 = \nabla_1^1$ for all $r \in [0,1]$. We let $\nabla_s^r = F(s,r)$. The idea is to define an even form on $LM$ using 
the formula similar to that for $BCS(\nabla_s^r)$, expect with an additional term $\frac{\partial }{\partial r} \nabla_{s}^r$ shuffled in, and integrated from $r=0$ to $r=1$.
Explicitly, we let $H (\nabla_s^r)  = \sum_{k \geq  0} H_{2k+1} (\nabla_s^r)$ where $H_{2k+1} (\nabla_s^r)$ is given by 
\begin{multline} \label{Def-H_(2k+1)}
H_{2k+1} (\nabla_s^r) 
= Tr \Bigg ( \int_{r=0}^{r=1} \int_{s=0}^{s=1} \sum_{n_1,\dots,n_p\geq 0} \quad \sum_{\scriptsize
\begin{matrix}
J\subset S, |J| = k \\
(i_{q_1},m_1), (i_{q_2},m_2) \in S - J \\
(i_{q_1},m_1)\neq (i_{q_2},m_2)
\end{matrix}
} \\
 g_{i_p,i_1}
\wedge \bigg( \int_{\Delta^{n_1}} X^{r,1}_{s, i_1}\left(\frac {t_1} p\right) \cdots X^{r, n_1}_{s, i_1}\left(\frac {t_{n_1}} p\right) dt_1 \cdots dt_{n_1} \bigg)
\wedge  g_{i_1,i_2}
\\\cdots   g_{i_{p-1},i_p}
\wedge \bigg(
\int_{\Delta^{n_p}} X^{r,1}_{s, i_p}\left(\frac {p- 1 + t_1} p \right) \cdots X^{r, n_p}_{s,i_p}\left(\frac {p- 1+t_{n_p}} p\right) dt_1 \cdots dt_{n_p} 
\bigg) ds dr \Bigg)
\end{multline}
where $g_{i_{k-1},i_k}$ is evaluated at $\gamma((k-1)/p)$, and the second sum is a sum over all $k$-element index sets $J\subset S$ of the sets
$S= \{(i_\alpha,j): \alpha=1,\dots, p,\text{ and } 1\leq j\leq n_\alpha\}$, and distinct singletons $(i_{q_1},m_1), (i_{q_2},m_2) \in S - J$, and
\[
X^{r,j}_{s,i} = \left\{
\begin{array}{rl}
R_{s,i}^r & \text{if  } (i,j) \in J\\
\frac{\partial }{\partial s} A_{s,i}^r & \text{if  } (i,j) = (i_{q_1},m_1) \\
\frac{\partial }{\partial r}  A_{s,i}^{r} & \text{if  } (i,j) = (i_{q_2},m_2) \\
\iota A_{s,i}^r & \text{otherwise.}
\end{array} \right.
\]
Here $A_{s,i}^r$ is the local expression of $\nabla_s^r$ in $U_i$, with curvature $R_{s,i}^r$. It is shown in Proposition \ref{PROP:H-well-def} that $H(\nabla_r^s)$ is independent of the local trivialization chosen in the above expression for \eqref{Def-H_(2k+1)}, and thus defines a well defined global form on $LM$. 

Using the same techniques as in Theorem \ref{thm:dBCS} to calculate $(d+\iota)BCS(\nabla)$, and the equality of mixed partial derivatives, we can calculate that
\[
(d + \iota)  H (\nabla_s^r) = Z_1(\nabla_s^r) - Z_2(\nabla_s^r)
\]
where
\begin{multline*}
Z_1(\nabla_s^r)
= Tr \Bigg ( \int_{r=0}^{r=1} \int_{s=0}^{s=1} \sum_{n_1,\dots,n_p\geq 0} \quad \sum_{\scriptsize
\begin{matrix}
J\subset S, |J| = k \\
(i_{q_1},m_1) \in S - J
\end{matrix}
}
\\ \frac{\partial}{\partial r} \Bigg[  g_{i_p,i_1} 
\wedge \bigg(   \int_{\Delta^{n_1}} X^{r,1}_{s, i_1}\left(\frac {t_1} p\right) \cdots X^{r, n_1}_{s, i_1}\left(\frac {t_{n_1}} p\right) dt_1 \cdots dt_{n_1} \bigg)
\wedge  g_{i_1,i_2}
\\ \cdots   g_{i_{p-1},i_p}
\wedge \left(
\int_{\Delta^{n_p}} X^{r,1}_{s, i_p}\left(\frac {p- 1 + t_1} p \right) \cdots X^{r, n_p}_{s,i_p}\left(\frac {p- 1+t_{n_p}} p \right) \right) dt_1 \cdots dt_{n_p} \Bigg]  ds  dr\Bigg)
\end{multline*}
where $g_{i_{k-1},i_k}$ is evaluated at $\gamma((k-1)/p)$, and the second sum is a sum over all $k$-element index sets $J\subset S$ of the sets
$S= \{(i_\alpha,j): \alpha=1,\dots, p,\text{ and } 1\leq j\leq n_\alpha\}$, and singletons $(i_{q_1},m_1) \in S - J$, and
\[
X^{r,j}_{s,i} = \left\{
\begin{array}{rl}
R_{s,i}^r & \text{if  } (i,j) \in J\\
\frac{\partial }{\partial s} A_{s,i}^r & \text{if  } (i,j) = (i_{q_1},m_1) \\
\iota A_{s,i}^r & \text{otherwise.}
\end{array} \right.
\]
and 
\begin{multline*}
Z_2(\nabla_s^r)
= Tr \Bigg ( \int_{r=0}^{r=1} \int_{s=0}^{s=1} \sum_{n_1,\dots,n_p\geq 0} \quad \sum_{\scriptsize
\begin{matrix}
J\subset S, |J| = k \\
(i_{q_1},m_1) \in S - J
\end{matrix}
}
\\ \frac{\partial}{\partial s} \Bigg[  g_{i_p,i_1} 
\wedge \bigg(   \int_{\Delta^{n_1}} X^{r,1}_{s, i_1}\left(\frac {t_1} p\right) \cdots X^{r, n_1}_{s, i_1}\left(\frac {t_{n_1}} p\right) dt_1 \cdots dt_{n_1} \bigg)
\wedge  g_{i_1,i_2}
\\ \cdots   g_{i_{p-1},i_p}
\wedge \left(
\int_{\Delta^{n_p}} X^{r,1}_{s, i_p}\left(\frac {p- 1 + t_1} p \right) \cdots X^{r, n_p}_{s,i_p}\left(\frac {p- 1+t_{n_p}} p \right) \right) dt_1 \cdots dt_{n_p} \Bigg]  ds  dr\Bigg)
\end{multline*}
where $g_{i_{k-1},i_k}$ is evaluated at $\gamma((k-1)/p)$, and the second sum is a sum over all $k$-element index sets $J\subset S$ of the sets
$S= \{(i_\alpha,j): \alpha=1,\dots, p,\text{ and } 1\leq j\leq n_\alpha\}$, and singletons $(i_{q_1},m_1) \in S - J$, and
\[
X^{r,j}_{s,i} = \left\{
\begin{array}{rl}
R_{s,i}^r & \text{if  } (i,j) \in J\\
\frac{\partial }{\partial r}  A_{s,i}^{r} & \text{if  } (i,j) = (i_{q_2},m_2) \\
\iota A_{s,i}^r & \text{otherwise.}
\end{array} \right.
\]
Now, using the fundamental theorem of calculus with respect to $s$, we see that $Z_2(\nabla_s^r) =0$, because 
$\frac{\partial }{\partial r}  A_{0,i}^{r} = \frac{\partial }{\partial r}  A_{1,i}^{r}=0$, as $\nabla_0^r$ and $\nabla_1^r$ are constant.
On the other hand, using the fundamental theorem of calculus with respect to $r$ we have
\[
Z_1(\nabla_s^r) = BCS(\nabla_s^1) - BCS(\nabla_s^0)
\]
which shows $(d + \iota)H(\nabla_s^r) = BCS(\nabla_s^1) - BCS(\nabla_s^0)$ and completes the proof.
\end{proof}

\begin{defn}[BCS-equivalence] \label{defn:BCSexactness}
Let $E \to M$ be a complex vector bundle. We say two connections $\nabla_0$ and $\nabla_1$ on $E$ are $BCS$-equivalent 
if $BCS(\nabla_s)$  is $(d+\iota)$-exact for some path of connections $\nabla_s$ from $\nabla_0$ to $\nabla_1$.
\end{defn}

By Proposition \ref{prop:bigon}, if $BCS(\nabla_s)$ is $(d+\iota)$-exact for some path of connections $\nabla_s$ from $\nabla_0$ to $\nabla_1$, then $BCS(\nabla_s)$  is $(d+\iota)$-exact for any path of connections $\nabla_s$ from $\nabla_0$ to $\nabla_1$. Moreover, given two connections $\nabla_0$ and $\nabla_1$ on $E$, there is a well defined element 
\[
[BCS(\nabla_0,\nabla_1)]=[BCS(\nabla_s)] \in \Om^{odd}_{S^1}(LM) \Big/ Im(d + \iota),
\]
which is independent of the path $\nabla_s$ between $\nabla_0$ and $\nabla_1$. Two connections $\nabla_0$ and $\nabla_1$ are $BCS$-equivalent if and only if $[BCS(\nabla_0, \nabla_1)]=0$.

We remark that $BCS$-equivalence is an equivalence relation on the set of connections on a fixed bundle $E \to M$. Only transitivity needs checking, but it follows from the fact that 
\begin{equation} \label{BCSsum}
[BCS(\nabla_0,\nabla_2)] = [BCS(\nabla_0,\nabla_1)] + [BCS(\nabla_1,\nabla_2)] 
\end{equation}
since we may choose a path $\nabla_s$ from $\nabla_0$ to $\nabla_2$ that passes through $\nabla_1$, and then the 
integral over $s$ defining $BCS(\nabla_s)$ breaks into a sum.

The Bismut-Chern-Simons forms satisfy the following relations regarding direct sum and tensor product, which will be used to define loop differential K-theory.

\begin{thm} \label{prop:BCSsumtensor}
Let $E\to M$ and $\bar E \to M$ be two complex vector bundles, each with a path of connections $(E,\nabla_s) $ and $(\bar E,\bar \nabla_s)$ with $s \in [0,1]$, respectively. 
Let $\nabla_s \oplus \bar \nabla_s$ be the induced path of connections on $E \oplus \bar E$, and let 
$\nabla_r \otimes \bar \nabla_s:= \nabla_r \otimes  Id + Id \otimes \bar \nabla_s$ be the induced connections on $E \otimes \bar E$ for any $r,s\in [0,1]$. Then
\[
BCS(\nabla_s \oplus \bar \nabla_s) = BCS(\nabla_s) + BCS(\bar \nabla_s)
\]
and
\[
BCS(\nabla_0 \ot  \bar \nabla_s) = BCh(\nabla_0) \wedge BCS( \bar \nabla_s) \quad \quad 
BCS(\nabla_s \ot  \bar \nabla_1) = BCS(\nabla_s ) \wedge BCh( \bar  \nabla_1) 
\]
and so
\[
[BCS(\nabla_0 \ot  \bar  \nabla_0,\nabla_1 \ot  \bar  \nabla_1 )] = 
BCh(\nabla_0) \wedge [BCS( \bar  \nabla_0, \bar  \nabla_1)]+ [BCS(\nabla_0, \nabla_1)] \wedge BCh( \bar  \nabla_1) 
\]
\end{thm}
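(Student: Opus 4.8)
The plan is to prove the three displayed formulas in order, since the last one follows formally from the middle two together with the additivity of $BCS$ over concatenated paths (equation \eqref{BCSsum}) and the direct-sum formula for $BCh$ (Theorem \ref{thm:BChsumtensor}). For the direct-sum statement, I would argue exactly as in the proof of Theorem \ref{thm:BChsumtensor}: choose a common cover $\{U_i\}$ trivializing both $E$ and $\bar E$, so that $\nabla_s \oplus \bar\nabla_s$ is represented locally by the block-diagonal matrices with blocks $A_{s,i}$ and $B_{s,i}$, with block-diagonal transition functions, curvatures, and derivatives $A'_{s,i}, B'_{s,i}$. Since block-diagonal matrices form a subalgebra and trace is additive over blocks, every summand in Definition \ref{defn:BCS} for $E \oplus \bar E$ splits as the corresponding summand for $E$ plus that for $\bar E$ — in particular the single distinguished $A'_{s,i}$-slot either sits in the top block (contributing to $BCS(\nabla_s)$, with all other slots also top-block $R$ or $\iota A$) or the bottom block — giving $BCS(\nabla_s \oplus \bar\nabla_s) = BCS(\nabla_s) + BCS(\bar\nabla_s)$.

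For the tensor-product formulas, I would focus on $BCS(\nabla_0 \otimes \bar\nabla_s) = BCh(\nabla_0) \wedge BCS(\bar\nabla_s)$; the other is symmetric. Locally $\nabla_0 \otimes \bar\nabla_s$ is $A_{0,i} \otimes \mathrm{Id} + \mathrm{Id} \otimes B_{s,i}$, with curvature $R_{0,i} \otimes \mathrm{Id} + \mathrm{Id} \otimes S_{s,i}$ and transition functions $g_{ij} \otimes h_{ij}$; crucially $\frac{\partial}{\partial s}(A_{0,i} \otimes \mathrm{Id} + \mathrm{Id} \otimes B_{s,i}) = \mathrm{Id} \otimes B'_{s,i}$, so the distinguished $A'$-slot is \emph{always} of the form $\mathrm{Id} \otimes (\cdot)$. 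Then I would invoke the same combinatorial identity used in the proof of Theorem \ref{thm:BChsumtensor}: on each subinterval, a product of $m$ factors each of the shape $P \otimes \mathrm{Id} + \mathrm{Id} \otimes Q$ (with $k$ of them curvatures, one of them the $\mathrm{Id}\otimes B'$ factor, the rest $\iota$-contracted connections), when expanded, splits as a sum over shuffles of the $m$ slots into a left block of size $m_1$ containing only $R_{0,i}$ and $\iota A_{0,i}$ factors, and a right block of size $m_2$ containing only $S_{s,i}$, $\iota B_{s,i}$, and the single $B'_{s,i}$ factor. Integrating over $\Delta^m = \bigcup_{m_1+m_2 = m} \Delta^{m_1} \times \Delta^{m_2}$, using that the pieces overlap only on lower-dimensional faces, turns each simplex integral into a product of a $\Delta^{m_1}$-integral (a $BCh$-type term built from $A_{0,i}$) tensored with a $\Delta^{m_2}$-integral (a $BCS$-type term built from $B_{s,i}$, carrying the unique $B'_{s,i}$). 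Threading these splittings through all $p$ subintervals and reassembling the transition functions $g_{ij}$ and $h_{ij}$ into two separate ordered products, then applying $\mathrm{Tr}(X \otimes Y) = \mathrm{Tr}(X)\mathrm{Tr}(Y)$ and integrating over $s$, yields $hol(\nabla_0 \otimes \bar\nabla_s)$-level identities that sum to $BCh(\nabla_0) \wedge BCS(\bar\nabla_s)$.

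The main obstacle is bookkeeping rather than conceptual: I must verify that the unique distinguished $A'$-slot lands consistently in the $\bar E$-factor across all the shuffle decompositions — this is exactly why one needs the endpoint $\nabla_0$ (resp. $\bar\nabla_1$) fixed, so that $\frac{\partial}{\partial s}$ annihilates that tensor leg — and that the grading degrees add correctly so that a degree-$2i$ $BCh$-piece wedges with a degree-$(2j+1)$ $BCS$-piece to give degree $2(i+j)+1$, matching $BCS_{2k+1}$ with $k = i+j$. I would record this as the analogue of \eqref{holtensor1}, namely $hol_{2k+1}(\nabla_0 \otimes \bar\nabla_s) = \sum_{i+j=k} hol_{2i}(\nabla_0) \otimes hol_{2j+1}(\bar\nabla_s)$ before taking traces.

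Finally, for the last displayed formula I would write a path $\nabla_s \otimes \bar\nabla_s$ from $\nabla_0 \otimes \bar\nabla_0$ to $\nabla_1 \otimes \bar\nabla_1$ by concatenating the path $s \mapsto \nabla_0 \otimes \bar\nabla_s$ (from $\nabla_0\otimes\bar\nabla_0$ to $\nabla_0\otimes\bar\nabla_1$) with the path $s \mapsto \nabla_s \otimes \bar\nabla_1$ (from $\nabla_0\otimes\bar\nabla_1$ to $\nabla_1\otimes\bar\nabla_1$). By \eqref{BCSsum} the class $[BCS(\nabla_0\otimes\bar\nabla_0, \nabla_1\otimes\bar\nabla_1)]$ equals the sum of the two corresponding classes, and applying the two tensor formulas just proven — together with the fact, from Theorem \ref{thm:dBCS} (or Corollary \ref{Cor:BChcharacter}), that $BCh(\nabla_0)$ and $BCh(\bar\nabla_1)$ are $(d+\iota)$-closed so wedging with them descends to the quotient by $\mathrm{Im}(d+\iota)$ — gives $BCh(\nabla_0) \wedge [BCS(\bar\nabla_0, \bar\nabla_1)] + [BCS(\nabla_0,\nabla_1)] \wedge BCh(\bar\nabla_1)$, as claimed.
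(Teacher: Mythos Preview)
Your proposal is correct and follows essentially the same approach as the paper: block-diagonal matrices for the direct-sum statement, the shuffle/simplex decomposition of Theorem~\ref{thm:BChsumtensor} together with $\frac{\partial}{\partial s}(\nabla_0\otimes\bar\nabla_s)=Id\otimes\frac{\partial}{\partial s}\bar\nabla_s$ for the tensor formulas, and the concatenation of the two one-parameter paths via \eqref{BCSsum} plus $(d+\iota)$-closedness of $BCh$ for the final identity. If anything, you spell out the combinatorics in more detail than the paper, which simply refers back to Theorem~\ref{thm:BChsumtensor} and records the key observation about the derivative.
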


\begin{proof}
If $\nabla_s$ and $ \bar  \nabla_s$ are locally represented by $A_{s,i}$ and $B_{s,i}$, then $\nabla_s \oplus  \bar  \nabla_s$ is locally given
by the block matrixes with blocks  $A_{s,i}$ and $B_{s,i}$. Similarly this holds for transition functions, curvatures, and the derivatives $A'_{s,i}$ and $B'_{s,i}$. The result now follows from Definition \ref{defn:BCS}, since block matrices are a subalgebra, and trace and integral over $s$ are additive along blocks.

The proof that $BCS(\nabla_0 \ot  \bar  \nabla_s) = BCh(\nabla_0) \wedge BCS( \bar  \nabla_s)$ is almost identical to the calculation
in Theorem \ref{thm:BChsumtensor} that $BCh(\nabla \ot  \bar  \nabla) = BCh(\nabla) \wedge BCh( \bar  \nabla)$, using 
the additional fact that $\frac{\partial}{\partial s} (\nabla_0 \ot  \bar \nabla_s) = Id \ot \frac{\partial}{\partial s} ( \bar \nabla_s)$.
The claim $BCS(\nabla_s \ot  \bar  \nabla_1) = BCS(\nabla_s ) \wedge BCh( \bar  \nabla_1) $ is proved similarly.

For the last claim, we use \eqref{BCSsum} and the composition of paths of connections $\nabla_0\ot\bar\nabla_s$ (for $s\in[0,1]$) with $\nabla_s\ot\bar\nabla_1$ (for $s\in[0,1]$), to conclude
\begin{eqnarray*}
[BCS(\nabla_0 \ot  \bar \nabla_0,\nabla_1 \ot  \bar  \nabla_1 )]  &=& [BCS(\nabla_0 \ot  \bar  \nabla_s)] + [BCS(\nabla_s \ot  \bar  \nabla_1)] \\
&=& [BCh(\nabla_0) \wedge BCS( \bar  \nabla_s)]+ [BCS(\nabla_s) \wedge BCh( \bar  \nabla_1)] \\
&=& BCh(\nabla_0) \wedge [BCS( \bar  \nabla_s)]+ [BCS(\nabla_s)] \wedge BCh( \bar  \nabla_1) 
\end{eqnarray*}
where in the last step we have used that $(d + \iota)$ is a derivation of $\wedge$, and $BCh$ is $(d + \iota)$-closed.
\end{proof}

\begin{cor} [Cancellation law] \label{cor:cancel}
Let $E \to M$ be a complex vector bundle with a pair of connections $\nabla_0$ and $\nabla_1$, and let
and $(\bar E,\bar \nabla) \to M$ be a bundle with fixed connection.  
Then $[BCS(\nabla_0 \oplus \bar\nabla, \nabla_1 \oplus \bar\nabla)] = [BCS(\nabla_0, \nabla_1)]$.
\end{cor}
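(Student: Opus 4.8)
The plan is to deduce the Cancellation Law directly from Theorem \ref{prop:BCSsumtensor} together with the additivity \eqref{BCSsum} of the $BCS$-classes under composition of paths. The key observation is that the fixed connection $\bar\nabla$ on $\bar E$ can be viewed as the constant path $\bar\nabla_s = \bar\nabla$ for all $s\in[0,1]$, and that the Bismut-Chern-Simons form of a constant path vanishes: indeed, in Definition \ref{defn:BCS} every summand of $BCS^{(p,\U)}_{2k+1}$ contains exactly one factor $A'_{s,i} = \frac{\partial}{\partial s}A_{s,i}$, which is identically zero when the path of connections is constant. Hence $BCS(\bar\nabla_s) = 0$ when $\bar\nabla_s$ is the constant path at $\bar\nabla$, and in particular $[BCS(\bar\nabla,\bar\nabla)] = 0$.

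First I would apply the first identity of Theorem \ref{prop:BCSsumtensor} to the path $\nabla_s \oplus \bar\nabla_s$ on $E \oplus \bar E$, where $\nabla_s$ is any path from $\nabla_0$ to $\nabla_1$ and $\bar\nabla_s \equiv \bar\nabla$ is the constant path. This gives
\[
BCS(\nabla_s \oplus \bar\nabla) = BCS(\nabla_s) + BCS(\bar\nabla_s) = BCS(\nabla_s) + 0 = BCS(\nabla_s).
\]
Passing to classes in $\Om^{odd}_{S^1}(LM)/\mathrm{Im}(d+\iota)$, and noting that the path $\nabla_s \oplus \bar\nabla$ runs from $\nabla_0 \oplus \bar\nabla$ to $\nabla_1 \oplus \bar\nabla$, we obtain immediately
\[
[BCS(\nabla_0 \oplus \bar\nabla, \nabla_1 \oplus \bar\nabla)] = [BCS(\nabla_s \oplus \bar\nabla)] = [BCS(\nabla_s)] = [BCS(\nabla_0,\nabla_1)],
\]
which is exactly the claim. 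By Proposition \ref{prop:bigon} (or the remark following Definition \ref{defn:BCSexactness}), the class $[BCS(\nabla_0,\nabla_1)]$ is independent of the chosen path $\nabla_s$, so no care about the choice of path is needed.

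There is essentially no obstacle here — the proof is a one-line consequence of the additivity of $BCS$ under direct sums combined with the vanishing of $BCS$ for a constant path. The only point worth spelling out carefully is the latter vanishing statement, since it is the content that makes "adding a fixed connection" harmless; once that is observed, the rest is formal bookkeeping with the equivalence classes. One could alternatively phrase the argument using \eqref{BCSsum} directly, but the direct-sum formula of Theorem \ref{prop:BCSsumtensor} is the cleanest route.
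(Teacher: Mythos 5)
Your proof is correct and follows essentially the same route as the paper: apply the direct-sum formula of Theorem \ref{prop:BCSsumtensor} with $\bar\nabla$ viewed as a constant path, and use that $BCS$ of a constant path vanishes since every summand contains a factor $A'_{s,i}=0$. You spell out the vanishing step that the paper leaves implicit, which is a helpful clarification but not a different argument.
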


\begin{proof} By the previous theorem, for any path of connection $\nabla_s$ from $\nabla_0$ to $\nabla_1$, 
\[
BCS(\nabla_s \oplus \bar\nabla) = BCS(\nabla_s) + BCS(\bar\nabla) = BCS(\nabla_s).
\]
\end{proof}

\section{Gauge Equivalence, BCS equivalence, CS equivalence} \label{sec:implications}

In this section we clarify how the condition of $BCS$-equivalence, defined in the previous section, is related to the notions of gauge equivalence, and to Chern-Simons equivalence, the latter defined in \cite{SS}. The definitions we need are as follows.

\begin{defn} Let $E \to M$ be a complex vector bundle. Two connections $\nabla_0$ and $\nabla_1$ on $E$ are:
\begin{enumerate}
\item \emph{gauge equivalent} if there is a vector bundle automorphism $f: E \to E$ covering $id: M \to M$ such that $f^* \nabla_1 = \nabla_0$.
\item \emph{gauge-path equivalent} if there is a path 
$h_s: E \to E$ of vector bundle automorphisms covering $id: M \to M$ such that $h_0=id$ and $h_1^* \nabla_0 = \nabla_1$.
\item \emph{CS-equivalent} if there is a path of connections $\nabla_s$ from $\nabla_0$ to $\nabla_1$ such that $CS(\nabla_s)$ is $d$-exact. 
\end{enumerate}
\end{defn}
 
In general, gauge equivalence does not imply gauge path equivalence, but if the gauge group consisting of bundle automorphisms $f: E \to E$ covering $id: M \to M$ is path connected, then gauge-path equivalence and gauge equivalence coincide. It is shown in \cite{SS} that $CS$-equivalence is independent of path $\nabla_s$. This also follows from Propositions \ref{prop:bigon} and \ref{prop:restBCS}. 

All three of these are equivalence relations, and Figure \ref{fig} describes how these are related to $BCS$-equivalence. 
The entries in the diagram are each conditions on a pair of connections $\nabla_0$ and $\nabla_1$ on a fixed bundle. 
Note the four entries labeled $BCS$ or $CS$, is exact or closed, mean that $BCS(\nabla_s), CS(\nabla_s)$  
is exact or closed for some path of connections $\nabla_s$ from $\nabla_0$ to $\nabla_1$, and this is 
well defined independent of path $\nabla_s$, by Propositions \ref{prop:bigon} and \ref{prop:restBCS}.

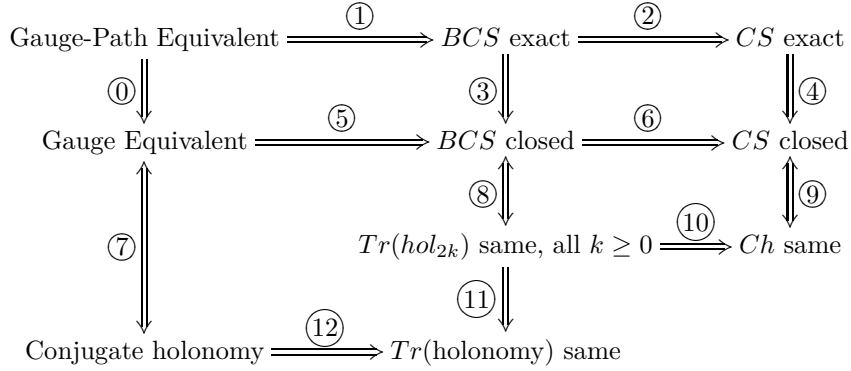
\begin{figure}
\[
\xymatrix{
\textrm{Gauge-Path Equivalent} \ar@{=>}[r]^-*+[o][F]{1} \ar@{=>}[d]_-*+[o][F]{0}  & \textrm{$BCS$ exact} \ar@{=>}[r]^*+[o][F]{2} \ar@{=>}[d]_*+[o][F]{3} & \textrm{$CS$ exact} \ar@{=>}[d]^*+[o][F]{4}  \\
  \textrm{Gauge Equivalent}  \ar@{<=>}[dd]_*+[o][F]{7} \ar@{=>}[r]^-*+[o][F]{5} & \textrm{$BCS$ closed} \ar@{=>}[r]^*+[o][F]{6} \ar@{<=>}[d]_*+[o][F]{8} & \textrm{$CS$ closed}  \ar@{<=>}[d]^*+[o][F]{9} \\
 & \textrm{$Tr(hol_{2k})$ same, all $k \geq 0$} \ar@{=>}[r]^-*+[o][F]{10} \ar@{=>}[d]_*+[o][F]{11} & \textrm{$Ch$ same} \\
 \textrm{Conjugate holonomy} \ar@{=>}[r]^*+[o][F]{12} & \textrm{$Tr($holonomy$)$ same}
}
\]
\caption{Diagram of implications for two connections on a bundle.}\label{fig}
\end{figure}

Clearly, gauge-path equivalence implies gauge equivalence, i.e. $\mycirc 0$ holds in Figure \ref{fig}. 
Implications $\mycirc{2}$ and $\mycirc 6$ follow from Proposition \ref{prop:restBCS} since the restriction map $\rho^* :(\Om^{S^1}(LM), d+ \iota)  \to (\Om(M), d)$  is a chain map sending $BCS(\nabla_s)$ to $CS(\nabla_s)$.
Similarly, implication $\mycirc {10}$ follows by Proposition \ref{prop:restBCh} since the restriction map 
$\rho^*$  sends $BCh(\nabla_i) = \sum_{k \geq 0} Tr(hol_{2k}(\nabla))$ to $Ch(\nabla)$.

Implications $\mycirc3$ and $\mycirc4$ follow since $(d+i)^2 BCS=0$ and $d^2 CS=0$, respectively,
while the  bi-conditional $\mycirc 7$ is standard from bundle theory: gauge equivalent connections have conjugate holonomy and the converse follows since if two connections have the same holonomy along all based loops (even at one point) then the connections are gauge equivalent (and the gauge equivalence can be constructed using the connections).

Implication $\mycirc5$ is Corollary \ref{cor:BCSclosedforGE}. Implications $\mycirc8$ and $\mycirc9$ follow from Theorem \ref{thm:dBCS} and Proposition \ref{prop:dCS}, respectively, while $\mycirc{11}$ follows since $hol_0$ is holonomy, by definition. 
Furthermore, $\mycirc{12}$ is true since trace is invariant under conjugation.

Only $\mycirc 1$ remains. We'll give only a sketch of the proof, since we won't need the result in what follows. If $\nabla_0$ and $\nabla_1$ are gauge-path equivalent then there is a path $h_s$ in the gauge group such that $h_0 = id$ and 
$h_s \nabla_0 = \nabla_s$ for all $s$. Then, on any local chart $U \subset M$ we can write $h^U_s: U \to G$, uniquely up to a choice of gauge, and have $(A^U_s)' = (d + [A^U_s, -]) ((h^U_s)^{-1} \frac{\partial}{\partial s} (h^U_s))$ for all $s$, where $A^U_s$ is the local expression of $\nabla_s$ on $U$.  

We define an even form $\omega_{h^U_s}$ on $LM$ by the same formula as for $BCS(\nabla_s)$ except we replace on each coordinate chart $U$ the $1$-form $A'_s$ by the function $(h^U_s)^{-1} \frac{\partial}{\partial s} (h^U_s)$. One then checks that $\omega_{h^U_s}$ determines a  well defined global even form $\omega_{h_s}$ on $LM$, independent of choices, using the same methods as in Appendix \ref{app:A} which show $BCS(\nabla_s)$ is well defined. Then we calculate that 
\[
(d+\iota) \omega_{h_s} = BCS(\nabla_s),
\]
by the same argument as used to compute $(d+\iota)BCS(\nabla_s)$, except at the new terms
$(h^U_s)^{-1} \frac{\partial}{\partial s} (h^U_s)$, where we use the relation $(d + [A^U_s, -]) ((h^U_s)^{-1} \frac{\partial}{\partial s} (h^U_s)) = (A^U_s)'$ and the facts that $h_s R_s = R_s h_s$ and $g_{ij} h_s^{U_i} = h^{U_j}_s g_{ij}$ on $U_i \cap U_j$, because $h_s$ is a gauge transformation.

\subsection{Counterexamples to converses} \label{sub:counterexs}

We give a single counterexample to the converses of implications $\mycirc 1$, $\mycirc 5$,  and $\mycirc{12}$ by constructing a bundle with a pair of connections that are $BCS$-equivalent, but do not have conjugate holonomy, as follows. 

Consider the trivial complex $2$-plane bundle  $\C^2 \times S^1 \to S^1$ over the circle. 
There is a path of flat connections given by 
\[
A_s = s 
\begin{bmatrix}
0 & dt \\
0 & 0 
\end{bmatrix},
\]
 so that $BCS(A_s)$ is a $1$-form on $LM$. Since
$A'_s = \begin{bmatrix}
0 & dt \\
0 & 0 
\end{bmatrix}$
and $A_s$ are upper triangular, those integrands in $BCS(A_s)$ containing $\iota A_s$ are zero, and we have
\[
BCS(A_s) = Tr \left( \int_0^1 A'_s ds \right) = 0.
\]
In particular $BCS(A_s)$ is exact. On the other hand, $A_0$ has holonomy along $S^1$ given by
$e^{\int A_0} = \begin{bmatrix}
1 & 0 \\
0 & 1
\end{bmatrix}$
while $A_1$ has holonomy along $S^1$ given by
$e^{\int A_1} = \begin{bmatrix}
1 & 1 \\
0 & 1
\end{bmatrix}
$, which are not conjugate. This shows the converse to both $\mycirc 1$, $\mycirc 5$ and $\mycirc{12}$ are false in general.

A counterexample to the converses of $\mycirc 2$ and $\mycirc 6$, and therefore also $\mycirc{10}$, is constructed as follows. Consider the trivial complex $2$-plane bundle $\C^2 \times S^1 \to S^1$ over the circle. 
For any $\alpha \in \R$ with $\alpha \neq 2k\pi$ for $k\in\Z$, consider the path of flat connections given by 
\[
A_s = s 
\begin{bmatrix}
0 & - \alpha dt \\
\alpha dt & 0 
\end{bmatrix},
\]
Then $CS(A_s)$ is a $1$-form on $M$, and is exact since
\[
CS(A_s) = Tr \left( \int_0^1 A'_s ds \right)  = 0.
\]
Similarly, $BCS(A_s)$ is a 1-form on $LM$, but $BCS(A_s)$ is not $(d+\iota)$-closed. Along the fundamental loop $\gamma$ of 
$S^1$ we have
\begin{eqnarray*}
(d + \iota) BCS(A_s)(\gamma) &=& BCh(A_1)(\gamma)  - BCh(A_0)(\gamma) \\
& =& Tr \left(
 \begin{bmatrix}
\cos \alpha & -\sin \alpha \\
\sin \alpha & \cos \alpha
\end{bmatrix}
-
 \begin{bmatrix}
1 & 0 \\
0  &1
\end{bmatrix}
\right)
\end{eqnarray*}
which is non-zero for $\alpha \neq 2k\pi$. This shows the converse to  $\mycirc 2$ and $\mycirc 6$ are both false.
We remark that since $BCS(A_s)$ is not closed, the endpoint connections are not gauge equivalent.

A counterexample to the converse of implication $\mycirc 3$ is given as follows.
Consider the trivial complex $2$-plane bundle $\C^2 \times S^1 \to S^1$ over the circle, with the path of flat connections given by 
\[
A_s = s 
\begin{bmatrix}
2 \pi i  \, dt &  dt \\
0 & 2 \pi i  \, dt 
\end{bmatrix},
\]
where $i=\sqrt{-1}$. Then $CS(A_s)$ is a non-exact $1$-form on $M$ since
\[
CS(A_s) = Tr \left( \int_0^1 A'_s ds \right)  = 4 \pi i \, dt.
\]
Therefore $BCS(A_s)$ is not exact. But we claim $BCS(A_s)$ is closed. To see this, note that for any loop $\gamma$ we have 
$(d + \iota) BCS(A_s)(\gamma) = BCh(A_1)(\gamma)  - BCh(A_0)(\gamma)$, and since curvature vanishes on the circle, the differential forms $BCh(A_i)(\gamma)$ are concentrated in degree zero and equal to the holonomy along $\gamma$. Since the holonomy depends only on the homotopy class of $\gamma$, it suffices to check that this expression vanishes on $\gamma^k$, for each $k \in \Z$, where 
$\gamma^k$ is $k$ times the fundamental loop in $S^1$. For this we have 
\[
BCh(A_1)(\gamma^k)  - BCh(A_0)(\gamma^k)  = Tr \left(
 \begin{bmatrix}
1 & 1 \\
0 & 1
\end{bmatrix}^k
-
 \begin{bmatrix}
1 & 0 \\
0  &1
\end{bmatrix}^k
\right)
 =0.
\]
Finally, we remark that the endpoint connections $A_1$ and $A_0$ are not gauge equivalent since the holonomies are not conjugate.

\section{Loop differential K-theory}
In this section we gather the previous results to define loop differential K-theory, and give some useful properties.
This definition given here is similar to the definition of (even) differential $K$-theory given in \cite{SS}, which uses $CS$-equivalence classes.

For any smooth manifold $M$ we can consider the collection of complex vector bundles $E \to M$ with connection $\nabla$.
Definition \ref{defn:BCSexactness} provides an equivalence relation on this set, $BCS$-equivalence, whose equivalence classes will be denoted by $\{(E,\nabla)\}$. We say $(E,\nabla)$ and $(\bar E, \bar \nabla)$ are isomorphic if there is a bundle 
isomorphism $\phi: \bar E \to E$ such that $\phi^*( \nabla) = \bar\nabla$. By Proposition \ref{prop:BCSindepchoices}, 
$\phi^*\{(E,\nabla)\} = \{\phi^*(E,\nabla)\}$, so we may consider the set of isomorphism classes of $BCS$-equivalence classes of bundles. 

By Theorem \ref{prop:BCSsumtensor} this set forms a commutative monoid $\mathcal{M}$ under direct sum, and the tensor product is well defined, commutative, and satisfies the distributive law. This assignment $M \mapsto \mathcal{M}(M)$ is contravariantly functorial in $M$.

\begin{defn} Let $M$ be a compact smooth manifold. Loop differential K-theory of $M$, denoted $L \widehat K^0(M)$, is the Grothendieck group of the commutative monoid $\mathcal{M}(M)$ of isomorphism classes of $BCS$-equivalences classes of finite rank complex vector bundles with connection over $M$.
This defines a contravariant functor from the category of smooth manifolds to the category of commutative 
rings.
\end{defn}

The Grothendieck functor $\mathcal{L}$ can be constructed by considering equivalence classes of pairs $(w,x) \in N \times N$, where
$(w,x) \cong (y,z)$ if and only if $w + z + k = y + x + k$ for some $k \in N$, and defining addition by $[w,x] + [y,z] = [w+y,x+z]$.
In this case, the identity element is represented by $(x,x)$ for any $x \in N$, and the monoid map $N \to \mathcal{L}N$ is given by $x \mapsto (x,0)$. A sufficient though not necessary condition that the map $N\to \mathcal L N$ is injective is that the monoid satisfies the cancellation law ($w+k=y+k \implies w=y$). For loop differential K-theory, the map $\mathcal{M}(M) \to L \widehat K(M)$ is injective since the monoid $\mathcal{M}(M)$ satisfies the cancellation law, by Corollary \ref{cor:cancel}.

\subsection{Relation to (differential) K-theory}

We have the following commutative diagram of ring homomorphisms
 \[
\xymatrix{
 & K^0(M) \ar[rd]^{[BCh]} & \\
 L \widehat K^0(M) \ar[ru]^f \ar [rd]^{BCh} & &H^{\textrm{even}}_{S^1}(LM) \\
 & \Omega^{\textrm{even}}_{(d+\iota)-cl} (LM) \ar[ru] &
}
\]
where $f$ is the map which forgets the equivalence class of connections, 
\[
BCh( \{(E,\nabla)\}, \{(\bar E, \bar \nabla)\} ) = BCh( \{(E,\nabla)\})- BCh( \{(\bar E, \bar \nabla)\})
\]
is well defined by Theorem \ref{thm:dBCS},
the map $[BCh]$ comes from Corollary \ref{cor:BChfromK}, and the map $\Omega^{\textrm{even}}_{(d+\iota)-cl} (LM) \to 
H^{\textrm{even}}_{S^1}(LM)$ is the natural map from the space of $(d+\iota)$-closed even forms given by the quotient by the image of $(d+\iota)$.

The analogous commutative diagram for differential K-theory was established in \cite{SS}, and in fact 
the commutative diagram above maps to this analogous square for differential K-theory, making the following commute.
\[
\xymatrix{
 &  K^0(M) \ar[rd]_{[BCh]}  \ar@{=}[rrd]^{id}  & & & \\
L \widehat K^0(M) \ar[ru]^f \ar [rd]_{BCh} \ar[rrd]^{\pi} & & H^{\textrm{even}}_{S^1}(LM) \ar[rrd]_{\rho^*}  &  K^0(M) \ar[rd]^{[Ch]}  & \\
& \Omega^{\textrm{even}}_{(d+\iota)-cl} (LM)  \ar[ru]  \ar[rrd]_{\rho^*}  & \widehat K^0(M)  \ar[ru]^g \ar [rd]^{Ch}  & & H^{\textrm{even}}(M)\\
& & & \Omega^{\textrm{even}}_{d-cl} (M)  \ar[ru] &
}
\]
Here $\rho^*$ is the restriction to constant loops, $\pi$ is well defined by Proposition \ref{prop:restBCS}, and $g$ is the forgetful map.

\begin{cor} \label{cor:LoopdKrefines}
The natural map $\pi: L \widehat K^0(M) \to \widehat K^0(M)$ from loop differential K-theory to differential $K$-theory is surjective, and in the case of $M = S^1$ has non-zero kernel. That is, the functor $M \mapsto L \widehat K^0(M)$ yields a refinement of differential $K$-theory.
\end{cor}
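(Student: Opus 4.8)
The plan is to establish the two assertions separately: surjectivity of $\pi$, and non-triviality of its kernel for $M = S^1$. For surjectivity, note that differential K-theory $\widehat K(M)$ in the Simons--Sullivan model is the Grothendieck group of the monoid of isomorphism classes of $CS$-equivalence classes of bundles with connection. Since the restriction map $\rho^*$ is a chain map sending $BCS(\nabla_s)$ to $CS(\nabla_s)$ (Proposition \ref{prop:restBCS}), $BCS$-equivalence implies $CS$-equivalence (this is implication $\mycirc{2}$ in Figure \ref{fig}), so there is a well-defined monoid surjection $\mathcal{M}(M) \to$ (the corresponding monoid for $\widehat K(M)$) given by passing to coarser equivalence classes; every bundle with connection represents a class in the target. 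Passing to Grothendieck groups, $\pi$ is surjective because every generator of $\widehat K(M)$ is hit, and differences of generators are hit by the corresponding differences.

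For the kernel being non-zero when $M = S^1$, the idea is to exhibit a single bundle with two connections $\nabla_0, \nabla_1$ that are $CS$-equivalent but not $BCS$-equivalent; then $[(E,\nabla_0)] - [(E,\nabla_1)]$ lies in $\ker \pi$ but is non-zero in $L\widehat K(S^1)$. Precisely the needed example already appears in subsection \ref{sub:counterexs}: the trivial bundle $\C^2 \times S^1 \to S^1$ with the path $A_s = s\left[\begin{smallmatrix} 0 & -\alpha\,dt \\ \alpha\,dt & 0\end{smallmatrix}\right]$ for $\alpha \neq 2k\pi$. There one has $CS(A_s) = \Tr\left(\int_0^1 A'_s\, ds\right) = 0$, so $\nabla_0$ and $\nabla_1$ are $CS$-equivalent, hence map to the same class in $\widehat K(S^1)$; but $(d+\iota)BCS(A_s)(\gamma) = BCh(A_1)(\gamma) - BCh(A_0)(\gamma) \neq 0$ along the fundamental loop $\gamma$, so $BCS(A_s)$ is not even $(d+\iota)$-closed, let alone exact, and the two connections are not $BCS$-equivalent. (By Proposition \ref{prop:bigon}, this is independent of the chosen path.)

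The remaining point, and the place requiring the most care, is to verify that $[(E,\nabla_0)] - [(E,\nabla_1)]$ is actually nonzero in the Grothendieck group $L\widehat K(S^1)$, not merely that $\nabla_0$ and $\nabla_1$ are inequivalent on $E$ itself. For this I would use that the monoid $\mathcal{M}(S^1)$ satisfies the cancellation law (Corollary \ref{cor:cancel}), so the natural map $\mathcal{M}(S^1) \to L\widehat K(S^1)$ is injective; hence it suffices to show $\{(E,\nabla_0)\} \neq \{(E,\nabla_1)\}$ in $\mathcal{M}(S^1)$, i.e. that these are not isomorphic $BCS$-equivalence classes. Since both sit on the same trivial rank-$2$ bundle, an isomorphism of classes would have to come from a bundle automorphism $\phi$ of $\C^2 \times S^1$ with $\phi^*\nabla_1$ $BCS$-equivalent to $\nabla_0$; but $BCh$ is a $BCS$-invariant and gauge-invariant (trace of holonomy is conjugation-invariant), and $BCh(\nabla_0)(\gamma) = \Tr(\mathrm{Id}) = 2$ while $BCh(\nabla_1)(\gamma) = \Tr\left[\begin{smallmatrix}\cos\alpha & -\sin\alpha\\ \sin\alpha & \cos\alpha\end{smallmatrix}\right] = 2\cos\alpha \neq 2$ for $\alpha \neq 2k\pi$, so no such $\phi$ exists. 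Therefore the class is nonzero in $\mathcal{M}(S^1)$ and hence in $L\widehat K(S^1)$, giving $\ker\pi \neq 0$ and proving $L\widehat K$ refines $\widehat K$.
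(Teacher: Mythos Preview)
Your proposal is correct and follows essentially the same approach as the paper: surjectivity via Proposition~\ref{prop:restBCS}, the same rotation example from subsection~\ref{sub:counterexs} for the kernel, and the same appeal to injectivity of $\mathcal{M}(S^1) \to L\widehat K(S^1)$ together with the distinct trace of holonomy to conclude the class is nonzero. Your treatment is slightly more explicit than the paper's in ruling out an isomorphism via a bundle automorphism $\phi$, but this is an elaboration of the same idea rather than a different route.
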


\begin{proof} Surjectivity follows from Proposition \ref{prop:restBCS}, since $L \widehat K^0(M)$ and $\widehat K^0(M)$ are defined from the same set of 
bundles with connection.

The second example in subsection \ref{sub:counterexs} provides two connections $\nabla$ and $ \bar \nabla$ on a bundle over $S^1$ such that $BCS(\nabla, \bar \nabla)$ is not $(d+\iota)$-closed, since these connections have different holonomy and thus are not $BCS$-equivalent or gauge equivalent. Nevertheless these connections are $CS$-equivalent, so that the induced element 
$\{E,\nabla\} - \{E, \bar \nabla \}$ in $L \widehat K^0(S^1)$ maps to zero in $\widehat K^0(S^1)$. Finally $(\{E,\nabla\}, \{E, \bar \nabla \})$ is nonzero in $L \widehat K^0(S^1)$, or equivalently $(\{E,\nabla\},0) \neq ( \{E, \bar \nabla \},0 )$, since $\{E,\nabla\} $ and $\{E,\bar \nabla\} $ have different trace of holonomy, and the map $\mathcal{M}(S^1) \to L \widehat K^0(S^1)$ is injective.
\end{proof}

On the other hand, we have the following. Let $G(M)$ denote the Grothendieck group of the monoid of complex vector bundles with connection over $M$, up to gauge equivalence, under direct sum. This is a ring under tensor product, and although this ring is often difficult to compute, we do have by Corollary \ref{cor:BCSgaugeindep} a well defined ring homomorphism 
$\kappa: G(M) \to L \widehat K^0(M)$.

\begin{cor} \label{cor:LoopdKcoarser}
The natural ring homomorphism $\kappa: G(M) \to L \widehat K^0(M)$ is surjective, and in the case of $M = S^1$ has non-zero kernel. That is, the functor $M \mapsto L \widehat K^)(M)$ is a strictly coarser invariant than $M\mapsto G(M)$, the Grothendieck group of all vector bundles with connection up to gauge equivalence.
\end{cor}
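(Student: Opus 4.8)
The plan is to establish the two assertions separately: surjectivity of $\kappa\colon G(M)\to L\widehat K(M)$, and non-triviality of its kernel for $M=S^1$. Surjectivity is essentially formal. Every element of $L\widehat K(M)$ is a formal difference of isomorphism classes of $BCS$-equivalence classes $\{(E,\nabla)\}$, and every such class is the image under $\kappa$ of the gauge-equivalence class of $(E,\nabla)$ in $G(M)$; since $\kappa$ is a ring homomorphism by Corollary \ref{cor:BCSgaugeindep}, it carries the formal difference $[(E,\nabla)]-[(\bar E,\bar\nabla)]$ in $G(M)$ to $(\{(E,\nabla)\},\{(\bar E,\bar\nabla)\})$ in $L\widehat K(M)$. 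Hence $\kappa$ is onto. (One should note, as in Corollary \ref{cor:LoopdKrefines}, that both groups are built as Grothendieck groups from monoids receiving the \emph{same} underlying set of bundles-with-connection, so no new generators are needed downstairs.)

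For the kernel over $S^1$, the idea is to exhibit a pair of connections on a bundle over $S^1$ that are $BCS$-equivalent but \emph{not} gauge equivalent; then the difference of their classes in $G(S^1)$ is nonzero but maps to $0$ in $L\widehat K(S^1)$. The first counterexample in Subsection \ref{sub:counterexs} is exactly of this type: on the trivial $\C^2\times S^1\to S^1$ the path $A_s=s\bigl[\begin{smallmatrix}0&dt\\0&0\end{smallmatrix}\bigr]$ has $BCS(A_s)=0$, so $A_0$ and $A_1$ are $BCS$-equivalent, yet their holonomies $\bigl[\begin{smallmatrix}1&0\\0&1\end{smallmatrix}\bigr]$ and $\bigl[\begin{smallmatrix}1&1\\0&1\end{smallmatrix}\bigr]$ are not conjugate, so by implication $\mycirc{12}$ (read contrapositively, together with the biconditional $\mycirc{7}$) the connections are not gauge equivalent. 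Therefore $[(E,A_0)]-[(E,A_1)]$ is a nonzero element of $\ker\kappa$. To be careful I must also argue this element really is nonzero in $G(S^1)$, not merely that $A_0\not\sim A_1$: since $G(S^1)$ is a Grothendieck group, I need that the classes are distinct \emph{and} that cancellation does not collapse the difference; here one can invoke that the rank-$0$ part and the holonomy spectrum are gauge invariants separating $[(E,A_0)]$ from $[(E,A_1)]$, so the difference cannot be $0$ in $G(S^1)$. Conversely, $\kappa$ sends this difference to $(\{E,A_0\},\{E,A_1\})=(\{E,A_0\},\{E,A_0\})=0$ because $A_0$ and $A_1$ are $BCS$-equivalent.

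I expect the main obstacle to be the bookkeeping around Grothendieck groups rather than any geometry: one must verify that the monoid $G(M)$ of gauge classes under $\oplus$ embeds into (or at least maps with controlled kernel to) $L\widehat K(M)$, and in particular that the specific difference $[(E,A_0)]-[(E,A_1)]$ does not become trivial in $G(S^1)$ for some spurious cancellation reason. This is handled by producing a gauge invariant (the conjugacy class of holonomy, or just its trace/spectrum) that factors through $G(S^1)$ and separates the two classes; once such an invariant is in hand, the argument closes. All other ingredients — $BCS$-triviality of the path $A_s$, well-definedness of $\kappa$, and the ring-homomorphism property — are already available from Subsection \ref{sub:counterexs}, Corollary \ref{cor:BCSgaugeindep}, and Theorem \ref{prop:BCSsumtensor}.
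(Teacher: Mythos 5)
Your overall structure matches the paper's: surjectivity is formal, and the kernel element comes from the first counterexample of Subsection \ref{sub:counterexs}. However, the step where you certify that $[(E,A_0)]-[(E,A_1)]$ is nonzero in $G(S^1)$ contains a genuine gap. You propose to separate the two gauge classes by ``the rank-$0$ part and the holonomy spectrum,'' but in this example both invariants coincide: $hol(A_0)=I$ and $hol(A_1)=\bigl[\begin{smallmatrix}1&1\\0&1\end{smallmatrix}\bigr]$ both have rank $2$ and spectrum $\{1,1\}$ (equivalently, equal traces of all powers --- which is forced here, since the connections are $BCS$-equivalent and hence have equal Bismut--Chern forms). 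So neither the spectrum nor the trace can do the separating job; the only invariant distinguishing the two classes is the full conjugacy class of the holonomy, and that is precisely the invariant whose compatibility with Grothendieck-group cancellation is not obvious.

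This is exactly the point the paper's proof has to address: to show $((E,A_0),(E,A_1))\neq 0$ in $G(S^1)$ one must rule out the existence of some $(\tilde E,\tilde\nabla)$ with $A_0\oplus\tilde\nabla$ gauge equivalent to $A_1\oplus\tilde\nabla$. The paper does this by proving a cancellation law for conjugacy classes under block sum: if $hol(A_0)\oplus hol(\tilde\nabla)$ is conjugate to $hol(A_1)\oplus hol(\tilde\nabla)$, then comparing Jordan block multiplicities shows $hol(A_0)$ and $hol(A_1)$ are themselves conjugate, contradicting the choice of example. You correctly flag the cancellation issue as the main obstacle, but the invariant you offer to resolve it fails on this very example, and the Jordan-form cancellation argument is not supplied. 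Adding that lemma (or an equivalent argument that the monoid of conjugacy classes of $GL(n,\C)$ under block sum is cancellative) closes the gap; the rest of your proposal is sound.
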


\begin{proof} Surjectivity follows again from the definition. The first example in subsection \ref{sub:counterexs} provides two connections $\nabla$ and $ \bar \nabla$ on a trivial bundle $E$ over $S^1$ that do not have conjugate holonomy, 
and so are not gauge equivalent, but are $BCS$-equivalent. Therefore, the induced element $((E,\nabla), (E, \bar \nabla ))$ in $G(S^1)$ maps to zero in $L \widehat K^0(S^1)$.  It remains to show that $((E,\nabla), (E, \bar \nabla ))$ is non-zero in $G(S^1)$,
i.e. that $( (E,\nabla ),0)$ and $( (E, \bar \nabla ),0)$ are not equal. 

This follows from a more general fact: if for some point $x \in M$, the holonomies of $\nabla$ and $\bar \nabla$ for loops based as $x$ are not related by conjugation by any automorphism of the fiber of $E$ over $x$, then $\nabla \oplus \tilde \nabla$ and 
$\bar \nabla  \oplus \tilde \nabla$ are not gauge equivalent for any $(\tilde E,\tilde \nabla)$. To see this, we verify the contrapositive. Suppose $\nabla \oplus \tilde \nabla$ and $\bar \nabla \oplus 
\tilde \nabla$ are gauge equivalent for some $(\tilde E, \tilde \nabla)$.
Then $\nabla \oplus \tilde \nabla$ and $\bar \nabla \oplus \tilde \nabla$ have conjugate holonomy for loops based at any point.  
But $hol(\nabla \oplus \tilde \nabla) = hol (\nabla) \oplus hol(\tilde \nabla)$ and similarly $hol(\bar \nabla \oplus \tilde \nabla) = 
hol(\bar \nabla) \oplus hol(\tilde \nabla)$. By appealing to the Jordan form, we see that $hol(\nabla)$ and $hol(\bar \nabla)$
are conjugate. 

The general fact implies the desired result, since for the given example, the holonomies of $(E,\nabla )$ and $(E, \bar \nabla )$ are not conjugate at any point of $x \in S^1$.
\end{proof}

\section{Calculating the ring $L \widehat K^0(S^1)$}\label{SEC:LK(S1)}

In this section we calculate the ring $L \widehat K^0(S^1)$ and show that the map $BCh: L \widehat K^0(S^1) \to 
\Omega^{\textrm{even}}_{(d+ \iota)-cl} (LS^1)$ is an isomorphism onto its image. It is instructive to calculate 
$L \widehat K^0(S^1)$ geometrically from the definition, and to independently calculate the image of the map from its definition,
observing that the map is an isomorphism. The techniques used for each case are somewhat different, and both may be useful for other 
examples.

Let us first calculate the image of the map $BCh: L \widehat K^0(S^1) \to \Omega^{\textrm{even}}_{(d+ \iota)-cl} (LS^1)$, which is contained in $\Omega^0_{(d+ \iota)-cl} (L S^1)$, since bundles over the circle are flat. The space $LS^1$ has countably many components $L^k S^1$, where $L^k S^1$ contains the $k^{th}$ power of the fundamental loop $\gamma$ of $S^1$ at some fixed basepoint. An element of $\Omega^0_{(d+ \iota)-cl} (L S^1)$ is uniquely determined 
by its (constant) values on each $L^k S^1$. Notice that $BCh(\gamma^k) = Tr(hol(\gamma^k))= Tr(hol(\gamma)^k)$,
and so if $hol(\gamma)$ has eigenvalues $\lambda_1,  \ldots ,\lambda_n$, then 
$BCh \big|_{L^k S^1} =  \lambda_1^k +  \ldots +\lambda_n^k$. It is a fact that if invertible matrices $A$ and $B$ satisfy $Tr(A^k) = Tr(B^k)$ for all $k \in \N$ then $A$ and $B$ have the same eigenvalues. Therefore, the map $BCh$ can be lifted to the map $\pi$:
\[
\xymatrix{
 & \coprod_{n \in \N} (\C^*)^n/ \Sigma_n \ar[d]^{i}  \\
\mathcal{M}(S^1) \ar[r]^-{BCh} \ar[ur]^{\pi} &   \Omega^{0}_{(d+\iota)-cl} (L S^1)   
}
\]
where the map $i$ is given by setting $i([ \lambda_1,  \ldots ,\lambda_n])$ to be $\lambda_1^k +  \ldots +\lambda_n^k$ on $L^k S^1$.

The set $  \coprod_{n \in \N} (\C^*)^n/ \Sigma_n $ is a monoid under concatentation, and there is a 
commutative product, given by $[ \lambda_1,  \ldots ,\lambda_n] * [\rho_1, \ldots , \rho_m] = [ \lambda_1 \rho_1,  \ldots ,
 \lambda_i \rho_j, \ldots ,  \lambda_n \rho_m]$, which satisfies the distributive law. The map $i$ is a homomorphism with 
 respect to these structures, and an inclusion. Moreover, $BCh$ maps onto the image of $i$, since we can construct a bundle over $S^1$ of any desired holonomy, and therefore any desired eigenvalues.

The Grothendieck functor $\mathcal{L}$ takes surjections to surjections, and injections to injections if the target monoid satisifies the cancellation law. In particular, groups are monoids satifying the cancellation law, and the Grothendieck functor is the identity on groups. Therefore we can apply the Grothendieck functor,  and obtain the following commutative diagram of rings
\[
\xymatrix{
 & \mathcal{L} \left( \coprod_{n \in \N} (\C^*)^n/ \Sigma_n \ar[d]^{i} \right)  \\
L \widehat K(S^1) \ar[r]^-{BCh} \ar[ur]^{\pi} &   \Omega^{0}_{(d+\iota)-cl} (L S^1)   
}
\]
This shows that $L \widehat K^0(S^1)$ maps surjectively onto the ring $\mathcal{L} \left( \coprod_{n \in \N} (\C^*)^n/ \Sigma_n \right)$, 
which imbeds into $\Omega^{0}_{(d+\iota)-cl} (L S^1) $. Since the diagram above commutes, this calculates the image of $BCh$.

We now calculate the ring $L \widehat K^0 (S^1)$ directly from the definition. We need the following

\begin{lem} Every $\C^n$-bundle with connection $(E \to S^1, \nabla)$ over $S^1$ is isomorphic,
as a bundle with connection, to one of the form $(\C^n \times S^1 \to S^1$, $\nabla = Jdt)$,
where J is a constant matrix in Jordan form.
\end{lem}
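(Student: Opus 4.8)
The plan is to trivialize the bundle, trivialize the connection over the interval by an ODE, and then repair the resulting non-periodic gauge transformation using a logarithm of the holonomy, finishing with a constant conjugation into Jordan form.

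First I would recall that every complex vector bundle over $S^1$ is trivial, since such bundles are classified by $\pi_0(GL(n,\C))$, which is trivial. Hence we may assume $E = \C^n\times S^1$ and write $\nabla = d + A$ with $A = a(t)\,dt$, where $t \in \R/\Z$ and $a\colon \R/\Z \to \gl(n,\C)$ is smooth. An isomorphism of $\C^n\times S^1$ (with connection) covering $\id_{S^1}$ is exactly a smooth map $g\colon S^1 \to GL(n,\C)$, acting on connection forms by $A \mapsto gAg^{-1} - (dg)g^{-1}$; so it suffices to exhibit such a $g$ taking $A$ to $J\,dt$ for some $J$ in Jordan normal form.

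Next I would solve the linear ODE $g_0' = g_0 a$ on $[0,1]$ with $g_0(0) = I$. Standard existence and uniqueness produces a smooth solution on all of $[0,1]$, and the Liouville formula $\det g_0(t) = \exp(\int_0^t \tr a)$ shows $g_0(t)$ is invertible for all $t$; one checks directly that $g_0$ conjugates $d + A$ to the trivial connection $d$ over $[0,1]\times\C^n$. The one obstruction to $g_0$ defining a gauge transformation on $S^1$ is that $U := g_0(1)$ need not equal $g_0(0) = I$. Here the key input enters: every invertible complex matrix has a logarithm (for instance block by block in Jordan form, with $\log(\lambda(I+N)) = (\log\lambda)I + \sum_{k\ge 1}\tfrac{(-1)^{k+1}}{k}N^k$ a finite sum for $\lambda \neq 0$ and $N$ nilpotent), so choose $C$ with $e^C = U$ and set $\tilde g(t) := e^{-tC}g_0(t)$. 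Then $\tilde g(0) = \tilde g(1) = I$, so $\tilde g$ descends to a gauge transformation on $S^1$, and a short computation using $g_0' = g_0 a$ and that $C$ commutes with $e^{\pm tC}$ yields $\tilde g\cdot(d+A) = d + C\,dt$. Finally, picking a constant $P \in GL(n,\C)$ with $PCP^{-1} = J$ in Jordan form (a constant, hence periodic, gauge transformation) gives $(P\tilde g)\cdot(d+A) = d + J\,dt$, the desired normal form. Consistently, the holonomy of $d + J\,dt$ is $e^{-J}$, which is then forced to be conjugate to the holonomy of $\nabla$.

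The step I expect to be the crux is this periodization: the interval trivialization $g_0$ does not by itself descend to $S^1$, and the argument hinges on being able to correct it by the path $e^{-tC}$ from $I$ to $U^{-1}$, which works precisely because we are over $\C$. Everything else (triviality of bundles over $S^1$, existence and invertibility of $g_0$, the gauge-transformation bookkeeping, and the final conjugation into Jordan form) is routine; the only points I would be careful about are matching the sign convention in the gauge action and checking that the composed maps remain smooth and $1$-periodic.
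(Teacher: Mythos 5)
Your proof is correct and rests on the same key input as the paper's, namely the surjectivity of the exponential map $\exp\colon \gl(n,\C)\to GL(n,\C)$, which is what makes the statement true over $\C$. The difference is one of packaging: the paper simply cites the fact that a bundle with connection over $S^1$ is determined up to isomorphism by the conjugacy class of its holonomy, and then chooses $J$ in Jordan form with $e^J$ in that class; you reprove that classification explicitly by trivializing the bundle, solving the parallel-transport ODE $g_0'=g_0a$ on $[0,1]$, and repairing the failure of periodicity with the path $e^{-tC}$ for $C=\log U$. Your version is more self-contained and actually exhibits the gauge transformation realizing the isomorphism, at the cost of the bookkeeping you flag; the paper's is shorter because it delegates the ODE argument to a standard fact. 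The crux you identify -- the existence of a logarithm of the monodromy $U$ -- is exactly the point where the paper invokes surjectivity of $\exp$ on conjugacy classes, so the two arguments agree where it matters.
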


\begin{proof} A $\C^n$-bundle with connection over $S^1$ is uniquely determined
up to isomorphism by its holonomy along the fundamental loop, which is a well defined element $[g] \in GL(n,\C)/ \sim$, 
where the latter denotes conjugacy classes of $GL(n,\C)$.

The exponential map from all complex matrices $M(n,\C)$ respects conjugacy
classes, it is surjective, so that it is surjective on conjugacy classes, and every conjugacy class is represented by a Jordan form.

Given a bundle with connection over $S^1$, let $[g]$ be the conjugacy class that determines it up to isomorphism. 
We can choose $J$ in Jordan form so that $[e^J] = [g] \in GL(n,\C)/ \sim$. Regard $Jdt$ as a connection on the trivial
bundle over $S^1$. Since the connection is constant, $e^{J}$ is the holonomy of this connection
along the fundamental loop, which completes the proof.
\end{proof}

By the Lemma, an element in the monoid $\mathcal{M}(S^1)$ which defines $L \widehat K^0(S^1)$ can always be represented by a bundle with connection of the form $(\C^n \times S^1 \to S^1$, $\nabla = Jdt)$.  The next lemma gives a sufficient condition for when two such are $BCS$-equivalent.

\begin{lem} Let $A_0dt$ and $A_1dt$ be constant connections on the trivial $\C^n$-bundle over $S^1$ where $A_0$ and $A_1$ are in Jordan form. If $A_0$ and $A_1$ have the same diagonal entries, then these connections are $BCS$-equivalent.
\end{lem}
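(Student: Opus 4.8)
The plan is to produce an explicit path of connections from $A_0\,dt$ to $A_1\,dt$ along which the Bismut–Chern–Simons form vanishes \emph{identically}; since the zero form is certainly $(d+\iota)$-exact, this establishes $BCS$-equivalence. (I read ``the same diagonal entries'' as equality of the ordered tuples $((A_0)_{11},\dots,(A_0)_{nn})$ and $((A_1)_{11},\dots,(A_1)_{nn})$.)

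First I would take the straight-line path $\nabla_s = A_s\,dt$ with $A_s = (1-s)A_0 + sA_1$ for $s\in[0,1]$, a path of connections on the trivial bundle $\C^n\times S^1\to S^1$ from $A_0\,dt$ to $A_1\,dt$. Since $A_0$ and $A_1$ are in Jordan form they are upper triangular, hence so is $A_s$ for every $s$; and since $A_0$ and $A_1$ have the same diagonal entries, the derivative $A'_s = A_1 - A_0$ is \emph{strictly} upper triangular, i.e.\ upper triangular with vanishing diagonal. Next I would note that every $\nabla_s$ on this path is flat: $S^1$ is one-dimensional, so the curvature $R_s = d(A_s\,dt) + (A_s\,dt)^2$, an $\End(\C^n)$-valued $2$-form on $S^1$, is zero. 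Consequently, in Definition~\ref{defn:BCS} all summands with $k\geq 1$ (those carrying at least one factor $R_{s,i}$) vanish, so $BCS(\nabla_s) = BCS_1(\nabla_s)$.

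Then I would compute $BCS_1(\nabla_s)$ from \eqref{eq:BCS} using the global trivialization of $\C^n\times S^1$, so that all transition functions $g_{ij}$ are the identity and drop out; what is left is a sum — over subdivisions, over the degeneracy indices $n_1,\dots,n_p$, and over the position of the unique $A'_s$ — of iterated integrals of $\Tr\big(X^1\cdots X^N\big)$, where $N=n_1+\cdots+n_p\geq 1$, exactly one factor $X^i$ equals $A'_s = A_1 - A_0$, and each remaining factor is $\iota A_s$, a scalar multiple of the upper triangular matrix $A_s$. Every such product is a product of upper triangular matrices one of which is strictly upper triangular, hence it is upper triangular with identically zero diagonal, so its trace vanishes. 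Therefore every integrand is zero, $BCS_1(\nabla_s)=0$, and thus $BCS(\nabla_s)=0$; in particular $BCS(\nabla_s)$ is $(d+\iota)$-exact, so $A_0\,dt$ and $A_1\,dt$ are $BCS$-equivalent.

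The step I expect to be the main (though not a deep) obstacle is the bookkeeping in the previous paragraph: one must check that contracting $A_s\,dt$ with the loop velocity merely rescales the matrix $A_s$, preserving upper-triangularity; that the $g_{ij}$ factors truly disappear in the global trivialization; and that flatness kills all $BCS_{2k+1}$ with $k\geq 1$. (If one only assumes that the diagonal \emph{multisets} agree, one would first conjugate $A_1$ by a permutation matrix to match the ordering — a gauge-path equivalence, hence $BCS$-exact by implication $\mycirc 1$ — but since conjugating a Jordan form need not preserve upper-triangularity, the trace argument would then need an extra adjustment, so I would state and prove the lemma in the ordered form and invoke $\mycirc 1$ separately where the multiset version is needed.)
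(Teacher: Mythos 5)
Your proof is correct and follows essentially the same route as the paper: both use a path whose derivative $A'_s$ is strictly upper triangular (the paper's path is constant on the diagonal and varies only on the super-diagonal, which is exactly your straight-line path since Jordan forms differ only there), and both conclude that every integrand in $BCS(\nabla_s)$ has vanishing diagonal, hence vanishing trace, so $BCS(\nabla_s)=0$. Your explicit remarks that flatness over $S^1$ kills all $BCS_{2k+1}$ with $k\geq 1$ and that $\iota(A_s\,dt)$ is a scalar multiple of $A_s$ simply make precise what the paper leaves implicit.
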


\begin{proof} There is a path $A_s dt$ from $A_0dt$ to $A_1dt$ which is constant on the diagonal, a function of $s$ on the super-diagonal, and zero in all other enties. Therefore the non-zero entries of $A'_s dt$ are all on the super-diagonal.
Then the integrand defining $BCS(A_s dt)$ is zero on the diagonal, so $BCS(A_s dt)=0$, and therefore the connections are $BCS$-equivalent.
\end{proof}

Since for each Jordan form there is a diagonal matrix with the same diagonal entries we have

\begin{cor} \label{cor:surj} 
Every element in the monoid $\mathcal{M}(S^1)$ is represented by a bundle with
connection over $S^1$ in the following form:
$(\C^n \times S^1 \to S^1, Adt)$ where $A$ is constant diagonal matrix.
\end{cor}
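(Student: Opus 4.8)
The plan is to deduce this directly from the two preceding lemmas of this section. First I would recall that an element of $\mathcal{M}(S^1)$ is an isomorphism class of $BCS$-equivalence classes of bundles with connection, so it suffices to produce, for each such element, a single representative bundle with connection of the asserted form. Starting from an arbitrary $(E \to S^1, \nabla)$, the first lemma of this section replaces it, up to isomorphism as a bundle with connection, by $(\C^n \times S^1, Jdt)$ with $J$ a constant matrix in Jordan form; by Proposition \ref{prop:BCSindepchoices} such an isomorphism carries $BCS$-equivalence classes to $BCS$-equivalence classes, so the corresponding element of $\mathcal{M}(S^1)$ is unchanged.

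Next I would let $D$ be the diagonal matrix whose diagonal entries are exactly the diagonal entries of $J$, namely the eigenvalues of $J$ listed with (algebraic) multiplicity; $D$ is itself trivially in Jordan form. Since $J$ and $D$ have the same diagonal entries, the second lemma gives that the connections $Jdt$ and $Ddt$ on $\C^n \times S^1$ are $BCS$-equivalent, hence represent the same element of $\mathcal{M}(S^1)$. Composing the two replacements shows the original element is represented by $(\C^n \times S^1, Ddt)$ with $D$ constant diagonal, which is the stated normal form.

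There is no genuine obstacle here: the content lies entirely in the two lemmas, and the only things requiring a moment's care are bookkeeping, namely that passing to an isomorphic bundle with connection does not alter the class in $\mathcal{M}(S^1)$ (already in place), and that the diagonal part of a Jordan matrix is a legitimate constant diagonal matrix with the same diagonal entries (immediate). I would therefore keep the written proof to a couple of lines, simply chaining ``isomorphic to a connection $Jdt$ in Jordan form'' with ``$Jdt$ is $BCS$-equivalent to $Ddt$ for its diagonal part $D$.''
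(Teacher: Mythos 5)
Your proof is correct and follows exactly the paper's route: reduce to a Jordan-form constant connection via the first lemma, then pass to its diagonal part via the second lemma, the isomorphism bookkeeping being already handled by Proposition \ref{prop:BCSindepchoices}. The paper states this in one sentence; your version just makes the same chain explicit.
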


It remains to determine when two representatives of the form $(\C^n \times S^1, Adt)$, where $A$ is constant diagonal matrix,
determine the same element in the monoid $\mathcal{M}(S^1)$. We first need

\begin{lem}
Let $A$ and $B$ be any two connections on the trivial bundle $\C^n \times S^1 \to S^1$. 
If $A$ and $B$ are isomorphic, or  $A$ and $B$ are BCS-equivalent, then $hol_\gamma(A)$ and $hol_\gamma(B)$ have the same eigenvalues, where $\gamma$ is the fundamental loop.
\end{lem}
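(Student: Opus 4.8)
The plan is to handle the two hypotheses separately but to funnel both of them into a single statement, namely that $\Tr(hol(A)^k) = \Tr(hol(B)^k)$ for every $k \in \N$; the conclusion then follows at once from the elementary fact recalled above in this section (an instance of Newton's identities over $\C$) that invertible complex matrices whose traces of all powers agree have the same eigenvalues.

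First I would dispose of the isomorphic case. If $\phi \colon \C^n \times S^1 \to \C^n \times S^1$ is a bundle automorphism covering $\id_{S^1}$ with $\phi^*\nabla_B = \nabla_A$, then parallel transport for $\nabla_A$ around any loop is obtained from parallel transport for $\nabla_B$ by conjugation with the fibre restrictions of $\phi$; hence $hol(A)$ and $hol(B)$ are conjugate in $GL(n,\C)$ and in particular have the same eigenvalues. (One could equally invoke the chain of implications in Figure~\ref{fig}: gauge equivalence implies $BCS$ closed implies $\Tr(hol_{2k})$ equal for all $k$, which in degree zero is exactly equality of $\Tr(\mathrm{hol})$ on $LS^1$.)

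For the $BCS$-equivalent case the mechanism is different. By definition there is a path $\nabla_s$ from $A$ to $B$ with $BCS(\nabla_s) = (d+\iota)H$ for some $H \in \Om^{even}_{S^1}(LM)$; since $H$ lies in the kernel of $(d+\iota)^2 = d\iota + \iota d$, we get $(d+\iota)BCS(\nabla_s) = 0$, and Theorem~\ref{thm:dBCS} then gives $BCh(B) = BCh(A)$ as forms on $LS^1$. Now I would use that every connection on a bundle over $S^1$ is flat, so all curvature terms in $BCh = \sum_{k}\Tr(hol_{2k})$ vanish and $BCh = \Tr(hol_0) = \Tr(\mathrm{hol})$ is the degree-zero form given by the trace of holonomy; thus $\Tr(hol(A)) = \Tr(hol(B))$ as locally constant functions on $LS^1$. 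Evaluating this equality on the component $L^k S^1$ at the $k$-th power $\gamma^k$ of the fundamental loop, and using $hol(\gamma^k) = hol(\gamma)^k$, yields $\Tr(hol(A)^k) = \Tr(hol(B)^k)$ for all $k \in \N$. Since holonomies lie in $GL(n,\C)$, the recalled fact completes the proof.

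I do not expect a real obstacle here. The only points requiring care are: recording that $(d+\iota)$-exactness implies $(d+\iota)$-closedness, which uses that the primitive is an honest element of $\Om_{S^1}(LM)$ (equivalently, lies in the kernel of $d\iota + \iota d$); and the reduction to degree zero, where flatness of bundles over $S^1$ collapses $BCh$ to the trace of holonomy so that the comparison can be carried out pointwise on the components $L^k S^1$ — note that restriction to constant loops alone, which on $S^1$ only recovers $Ch$, i.e. the rank, would not suffice, and it is essential to test against the non-constant loops $\gamma^k$.
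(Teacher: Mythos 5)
Your proposal is correct and follows essentially the same route as the paper: conjugacy of holonomies in the isomorphic case, and in the $BCS$-equivalent case exactness $\Rightarrow$ closedness $\Rightarrow$ $BCh(A)=BCh(B)$ via Theorem~\ref{thm:dBCS}, which on the flat bundle over $S^1$ reduces to $\Tr(hol(A))=\Tr(hol(B))$, evaluated on $\gamma^k$ to get equality of traces of all powers and hence of eigenvalues. The extra details you supply (why exact implies closed, why $BCh$ collapses to $\Tr(hol_0)$) are accurate elaborations of steps the paper leaves implicit.
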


\begin{proof}
If $A$ and $B$ are isomorphic then their holonomy are conjugate, so they have the same eigenvalues.

Secondly, if $A$ and $B$ are $BCS$-equivalent then $BCS(\nabla_s)$ is exact for some path $\nabla_s$ from $A$ to $B$. So, $BCS(\nabla_s)$ is $(d + \iota)$-closed, so that $BCh(A) = BCh(B)$. Therefore, for the $k^{\text{th}}$ power of the fundamental loop we have $Tr(hol_\gamma(A)^k) = Tr(hol_\gamma(B)^k)$, for all $k$, so that $hol_\gamma(A)$ and $hol_\gamma(B)$ have the same eigenvalues. 
\end{proof}

\begin{prop} \label{prop:inj}
If $Adt$ and $Bdt$ are constant connections on $\C^n \times S^1 \to S^1$, where $A$ and $B$ are diagonal, such that $\{(\C^n \times S^1,Adt)\}$ and $\{(\C^n \times S^1, Bdt )\}$ define the same class in $\mathcal M(S^1)$, then:
\begin{enumerate}
\item $hol_\gamma(Adt)$ and $hol_\gamma(Bdt)$ have the same eigenvalues, where $\gamma$ is the fundamental loop.
\item $Adt$ and $Bdt$ are isomorphic.
\item After possible re-ordering, the eigenvalues of $Adt$ and $Bdt$ each differ by some integer multiple of $2\pi i$.
\end{enumerate}
\end{prop}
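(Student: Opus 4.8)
The plan is to reduce everything to the preceding Lemma together with elementary facts about the exponential of a diagonal matrix. First I would unwind what it means for $\{(\C^n \times S^1, Adt)\}$ and $\{(\C^n \times S^1, Bdt)\}$ to represent the same class in $\mathcal{M}(S^1)$. Since $\mathcal{M}(S^1)$ consists of isomorphism classes of $BCS$-equivalence classes, and since pullback along a bundle isomorphism carries $BCS$-equivalence to $BCS$-equivalence (Proposition \ref{prop:BCSindepchoices}) while isomorphisms compose and $BCS$-equivalences compose (transitivity, equation \eqref{BCSsum}), any chain of these relations collapses: there is a bundle automorphism $\phi$ of $\C^n \times S^1$ such that $\phi^*(Adt)$ is $BCS$-equivalent to $Bdt$.

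For (1), I would apply the preceding Lemma twice. The connection $\phi^*(Adt)$ is isomorphic to $Adt$, so $hol(\phi^*(Adt))$ and $hol(Adt)$ have the same eigenvalues; and $\phi^*(Adt)$ is $BCS$-equivalent to $Bdt$, so $hol(\phi^*(Adt))$ and $hol(Bdt)$ have the same eigenvalues. Chaining these equalities of eigenvalue multisets gives (1).

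For (2) and (3) I would use that the holonomy of the constant connection $Adt$ along the fundamental loop is $e^{A}$, and since $A$ is diagonal with entries $a_1,\dots,a_n$ we have $hol(Adt)=\mathrm{diag}(e^{a_1},\dots,e^{a_n})$, and likewise $hol(Bdt)=\mathrm{diag}(e^{b_1},\dots,e^{b_n})$. By (1) the multisets $\{e^{a_j}\}$ and $\{e^{b_j}\}$ coincide, so $e^{A}$ and $e^{B}$ are conjugate by a permutation matrix; since a $\C^n$-bundle with connection over $S^1$ is determined up to isomorphism by the conjugacy class of its holonomy (as recalled in the proof of the first Lemma of this section), $Adt$ and $Bdt$ are isomorphic, which is (2). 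Finally, re-ordering the $b_j$ so that $e^{a_j}=e^{b_j}$ for every $j$ yields $e^{a_j-b_j}=1$, hence $a_j-b_j\in 2\pi i\,\Z$, which is (3).

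The substance of the proposition lies entirely in part (1), and hence in the preceding Lemma (whose $BCS$ half is where the geometry enters: a $(d+\iota)$-closed $BCS$ form forces equal Bismut--Chern forms, i.e.\ equal trace of holonomy on every power of the loop, whence equal eigenvalues). The only step requiring genuine care is the bookkeeping of the first paragraph --- verifying that ``same class in $\mathcal{M}(S^1)$'' really does reduce to one isomorphism followed by one $BCS$-equivalence --- after which (2) and (3) are simply the statement that the exponential map is injective on diagonal matrices modulo $2\pi i\,\Z$ in each entry.
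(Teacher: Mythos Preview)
Your proposal is correct and follows essentially the same route as the paper: both proofs unwind ``same class in $\mathcal{M}(S^1)$'' to mean that some intermediate connection is isomorphic to one of $Adt$, $Bdt$ and $BCS$-equivalent to the other, invoke the preceding Lemma to obtain equal eigenvalues of holonomy, and then read off (2) and (3) from the classification of connections over $S^1$ by conjugacy class of holonomy together with $hol(Adt)=e^A$ for diagonal $A$. Your write-up is slightly more explicit about the collapsing of chains of isomorphisms and $BCS$-equivalences, but there is no substantive difference in strategy.
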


\begin{proof} 
If $Adt$ and $Bdt$ are as given and represent the same class in $\mathcal M(S^1)$, then there exists a connection $C$ such that $Adt$ and $C$ are $BCS$-equivalent, and $Bdt$ and $C$ are isomorphic. By the previous Lemma, the holonomy along $\gamma$ for all three connections $Adt$, $Bdt$, and $C$ have the same eigenvalues. This proves $1)$, and therefore $2)$, since
connections on a bundle over the circle are determined up to equivalence by the conjugacy class of holonomy, and $A$ and $B$ are diagonal.  Finally, $3)$ follows since the eigenvalues of $hol_\gamma(Adt) = e^A$ are the complex exponential of the eigenvalues of $A$.
\end{proof}

\begin{cor}
There is a bijection $\coprod_{n} \left(\C/ \Z\right)^n  / \Sigma_n \to \mathcal{M}(S^1)$ where
 $\Z$ is the subgroup of $\C$ given by $\{ 2 k \pi i \}$ for $k$ an integer, and the symmetric group $\Sigma_n$ acts on
 $\left(\C/ \Z\right)^n$ by reordering. This bijection is a semi-ring homomorphism with respect to the operations on 
 $\coprod_{n} \left(\C/ \Z\right)^n  / \Sigma_n$ given by concatentation, and 
\[
(a_1,  \ldots , a_n) * (b_1, \ldots , b_m) = (a_1+ b_1,  \ldots , a_i +b_j, \ldots ,  a_n+ b_m)
 \]
\end{cor}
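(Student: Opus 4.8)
The plan is to build the bijection directly out of Corollary~\ref{cor:surj} and Proposition~\ref{prop:inj}, once the map has been fixed and shown to be well defined. Throughout, identify $\C/\Z$ with $\C^*$ via the exponential $a\mapsto e^{a}$, so that the additive structure of $\C/\Z$ matches multiplication of holonomy eigenvalues and the starred product $(a_i)*(b_j)=(a_ib_j)$ is the one this identification induces. Define
\[
\Phi:\ \coprod_{n}(\C/\Z)^n/\Sigma_n\ \longrightarrow\ \mathcal{M}(S^1),\qquad [a_1,\dots,a_n]\ \longmapsto\ \{(\C^n\times S^1,\ \mathrm{diag}(\tilde a_1,\dots,\tilde a_n)\,dt)\},
\]
where $\tilde a_i\in\C$ is any lift of $a_i$. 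First I would check $\Phi$ is well defined: replacing $\tilde a_i$ by $\tilde a_i+2\pi i k$ does not change the holonomy $\mathrm{diag}(e^{\tilde a_1},\dots,e^{\tilde a_n})$ along the fundamental loop, and a $\C^n$-bundle with connection over $S^1$ is determined up to isomorphism by the conjugacy class of that holonomy (as in the proof of the first Lemma of this section), so the class in $\mathcal{M}(S^1)$ is unchanged; permuting the $\tilde a_i$ conjugates the connection matrix by a permutation matrix and hence replaces the bundle with connection by an isomorphic one. Thus $\Phi$ descends to $\coprod_n(\C/\Z)^n/\Sigma_n$.

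Surjectivity of $\Phi$ is exactly Corollary~\ref{cor:surj}: every class in $\mathcal{M}(S^1)$ is represented by some $(\C^n\times S^1,A\,dt)$ with $A$ constant and diagonal, and its diagonal entries reduced mod $2\pi i\Z$ give a preimage. For injectivity, suppose $\Phi[a_1,\dots,a_n]=\Phi[b_1,\dots,b_m]$. Since a class in $\mathcal{M}(S^1)$ has a well-defined underlying bundle up to isomorphism (isomorphism is built into $\mathcal M$, and $BCS$-equivalence relates only connections on a fixed bundle; alternatively compose with the forgetful map to $K(S^1)\cong\Z$), the ranks agree, so $n=m$. Now Proposition~\ref{prop:inj}(3) applies to the diagonal connections $\mathrm{diag}(\tilde a_i)\,dt$ and $\mathrm{diag}(\tilde b_i)\,dt$ and says that, after a reordering, their eigenvalues — which are precisely the $\tilde a_i$ and the $\tilde b_i$ — differ by integer multiples of $2\pi i$; that is, $a_i=b_{\sigma(i)}$ in $\C/\Z$ for some $\sigma\in\Sigma_n$, so the two tuples represent the same class and $\Phi$ is injective.

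Finally I would verify $\Phi$ is a semi-ring homomorphism. For addition this is immediate: $\mathrm{diag}(\tilde a)\,dt\oplus\mathrm{diag}(\tilde b)\,dt$ on $(\C^n\times S^1)\oplus(\C^m\times S^1)=\C^{n+m}\times S^1$ is the diagonal connection with entries $\tilde a_1,\dots,\tilde a_n,\tilde b_1,\dots,\tilde b_m$, so $\Phi$ carries concatenation to direct sum, consistently with Theorem~\ref{prop:BCSsumtensor}. For multiplication, the induced connection $\nabla\otimes\mathrm{Id}+\mathrm{Id}\otimes\bar\nabla$ on $\C^n\otimes\C^m$ acts on the basis vector $e_i\otimes f_j$ by $(\tilde a_i+\tilde b_j)\,dt$, so it is the diagonal connection $\mathrm{diag}\big((\tilde a_i+\tilde b_j)\,dt\big)$, whose holonomy eigenvalues are $e^{\tilde a_i}e^{\tilde b_j}$; hence $\Phi$ intertwines the starred product with tensor product, consistently with Theorem~\ref{thm:BChsumtensor}, and sends the one-element tuple $[0]$ to the class of the trivial line bundle with trivial connection. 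The substantive geometric input is entirely contained in Corollary~\ref{cor:surj} and Proposition~\ref{prop:inj}; the only places requiring care are the well-definedness of $\Phi$ and, for injectivity, first pinning down the rank, together with the purely bookkeeping task of matching the additive group $\C/\Z$ with the multiplicative structure that tensor product imposes on holonomy eigenvalues.
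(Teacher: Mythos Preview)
Your argument is correct and follows essentially the same route as the paper's proof: define the map by sending a tuple to the trivial bundle with the corresponding constant diagonal connection, check well-definedness via holonomy, invoke Corollary~\ref{cor:surj} for surjectivity and Proposition~\ref{prop:inj} for injectivity, and verify the semi-ring structure directly. Your version is simply more explicit in places where the paper says ``straightforward'' --- in particular, you spell out the rank argument needed before applying Proposition~\ref{prop:inj}, and you unpack the tensor-product computation showing the diagonal entries add, which clarifies how the paper's multiplicative notation $a_ib_j$ is to be read through the identification $\C/\Z\cong\C^*$.
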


\begin{proof} The map sends $(a_1,  \ldots , a_n)$ to the equivalence class of the trivial bundle with connection given by the constant diagonal matrix with entries $a_1, \ldots , a_n$. It is well defined since reordering the $a_i$,
or changing some $a_i$ by an element in $\Z$ produces an isomorphic bundle with connection. It is straightforward to check it 
is a semi-ring homomorphism. It is surjective by Corollary \ref{cor:surj} above, and injective by the Proposition \ref{prop:inj} above.
\end{proof}

More intuitively, the elements of $\mathcal{M}(S^1)$ are determined uniquely by log of the spectrum of holonomy.
It follows that the group $L \widehat K^0 (S^1)$ is simply the Grothendieck group 
 $\mathcal{L} \left( \coprod_{n} \left(\C/ \Z\right)^n / \Sigma_n \right)$ of this monoid 
 $\coprod_{n} \left(\C/ \Z\right)^n / \Sigma_n$. Finally we have:

\begin{prop}
The map $BCh: L \widehat K^0 (S^1) \to \Omega^{0}_{(d+\iota)-cl} (L S^1)$ is an isomorphism onto its image.
\end{prop}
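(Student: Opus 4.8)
The plan is to show that $BCh$ is injective on $L\widehat K(S^1)$, since the diagram established above already exhibits $BCh$ as a ring homomorphism onto a subring of $\Omega^0_{(d+\iota)-cl}(LS^1)$, and surjectivity onto the image is automatic. So it suffices to prove: if $\{(\C^n\times S^1,Adt)\}$ and $\{(\C^m\times S^1,Bdt)\}$ are two monoid elements, represented (by Corollary \ref{cor:surj}) by constant diagonal connections, with $BCh$-values agreeing on every component $L^kS^1$, then they define the same class in $\mathcal M(S^1)$; then pass to the Grothendieck group, using that $\mathcal M(S^1)\to L\widehat K(S^1)$ is injective by Corollary \ref{cor:cancel}.

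First I would unwind what equality of $BCh$-values means. On $L^kS^1$ we have $BCh|_{L^kS^1}=\mathrm{Tr}(hol(\gamma)^k)=\sum_j \mu_j^k$, where $\mu_1,\dots$ are the eigenvalues of the holonomy, i.e. the complex exponentials $e^{a_j}$ of the diagonal entries $a_j$ of $A$ (and similarly $e^{b_j}$ for $B$). The fact quoted in the excerpt—if invertible matrices $A,B$ satisfy $\mathrm{Tr}(A^k)=\mathrm{Tr}(B^k)$ for all $k\in\N$ then they have the same eigenvalues with multiplicity (equivalently, equal power sums of all orders force equal multisets, via Newton's identities)—then forces $n=m$ and, after reordering, $e^{a_j}=e^{b_j}$ for all $j$. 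This is exactly condition (3) of Proposition \ref{prop:inj} read backwards: $a_j$ and $b_j$ differ by an integer multiple of $2\pi i$. By the preceding corollary (the bijection $\coprod_n(\C/\Z)^n/\Sigma_n\xrightarrow{\sim}\mathcal M(S^1)$), this means $Adt$ and $Bdt$ represent the same class in $\mathcal M(S^1)$. Hence $BCh$ is injective on $\mathcal M(S^1)$.

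Next I would promote this to the Grothendieck group. Since $BCh$ on $L\widehat K(S^1)$ is induced from the monoid map $BCh:\mathcal M(S^1)\to\Omega^0_{(d+\iota)-cl}(LS^1)$ by the universal property of $\mathcal L$, and since $\mathcal M(S^1)$ satisfies the cancellation law (Corollary \ref{cor:cancel}), an element of $L\widehat K(S^1)$ in the kernel of $BCh$ is represented by a pair $(x,y)$ with $BCh(x)=BCh(y)$ in $\Omega^0_{(d+\iota)-cl}(LS^1)$; injectivity of $BCh$ on $\mathcal M(S^1)$ gives $x=y$ in $\mathcal M(S^1)$, so the class $(x,y)$ is zero. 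Thus $BCh$ is injective on $L\widehat K(S^1)$, and being a ring homomorphism it is an isomorphism onto its image.

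The main obstacle is the linear-algebra input: justifying that equality of all power sums $\sum_j\mu_j^k$, $k\ge 1$, of two finite multisets of nonzero complex numbers implies the multisets are equal. This is standard (Newton's identities recover the elementary symmetric functions, hence the characteristic polynomial, from the power sums; alternatively compare generating functions $\sum_k(\sum_j\mu_j^k)z^k=\sum_j \mu_j z/(1-\mu_j z)$ and read off poles and residues), and the paper has already invoked it verbatim, so the real content is purely bookkeeping: matching up this statement with condition (3) of Proposition \ref{prop:inj} and with the $\coprod_n(\C/\Z)^n/\Sigma_n$ description, then handling the Grothendieck-group step cleanly. Everything else—$BCh$ being a well-defined ring homomorphism, surjectivity onto the image—has been set up earlier in this section.
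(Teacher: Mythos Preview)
Your argument is correct and is essentially the same as the paper's, just unpacked more explicitly. The paper's proof simply invokes the two monoid identifications already established in this section, $L\widehat K(S^1)\cong\mathcal L(\coprod_n(\C/\Z)^n/\Sigma_n)$ and $\operatorname{Im}(BCh)\cong\mathcal L(\coprod_n(\C^*)^n/\Sigma_n)$, and observes that under these the map $BCh$ is induced by $(a_1,\ldots,a_n)\mapsto(e^{a_1},\ldots,e^{a_n})$, which is a bijection of monoids because $\exp:\C/(2\pi i\Z)\to\C^*$ is one; your proof re-derives the injectivity on $\mathcal M(S^1)$ directly via the power-sum fact rather than appealing to the already-computed image, and then handles the Grothendieck step exactly as one would expect.
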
 

\begin{proof}
Via the isomorphisms 
\[
L \widehat K^0 (S^1) \cong \mathcal{L}\left( \coprod_{n} \left(\C/ \Z\right)^n / \Sigma_n \right) 
\quad \textrm{and} \quad 
Im(BCh) \cong \mathcal{L} \left( \coprod_{n \in \N} (\C^*)^n/ \Sigma_n \right)
\]
the map is induced from the map on monoids given 
by $(a_1,  \ldots , a_n) \mapsto (e^{a_1},  \ldots , e^{a_n})$.
\end{proof}

We can also calculate the following maps:
\[
G(S^1) \to L \widehat K^0 (S^1) \to \widehat K^0 (S^1).
\]
The Grothendieck group $G(S^1)$ of all bundles with connection over $S^1$, up to isomorphism, is isomorphic
to the Grothendieck group  of the monoid of conjugacy classes in $GL(n,\C)$ under block sum and tensor product. The isomorphism is given by holonomy. 
The group $\widehat K^0 (S^1)$ is isomorphic to $\Z \oplus \C/ \Z$, as can be computed by the character diagram in \cite{SS}, or directly using a variation of the argument above used to calculate $L \widehat K^0 (S^1)$. With respect to these isomorphisms,
the first map $G(S^1) \to L \widehat K^0 (S^1)$ is given by taking $\log$ of the eigenvalues of a conjugacy class, while the second map $L \widehat K^0 (S^1) \to \widehat K^0 (S^1)$ is induced by $ (a_1,  \ldots , a_n) \mapsto (n,a_1 + \dots + a_n)$, where
 the sum is reduced modulo $\Z = \{ 2 k \pi i \}$.

\appendix

\section{} \label{app:A}

In this appendix we prove that the Bismut-Chern-Simons form $BCS(\nabla_s)$, associated to a path of connections
$\nabla_s$ on a vector bundle $E\to M$, is a well defined global differential form on $LM$. We also gather some useful corollaries.

Let  $ \{U_i\}$ be a covering of $M$ over which the bundle is locally trivialized, and write the connections $\nabla_s$ locally as $A_{s,i}$ on $U_i$, with curvature $R_{s,i}$.
For any $p\in \mathbb N$, and $p$ open sets $\U=(U_{i_1},\dots, U_{i_p})$ from the cover $\{U_i\}$, there is an induced open subset $\NN(p,\U)\subset LM$ given by
\[
\NN(p,\U)=\left\{\gamma\in LM: \left(\gamma\Big|_{\big[\frac{k-1}{p},\frac{k}{p}\big]}\right)\subset U_{i_j}, \forall j=1,\dots,p \right\}.
\]
Note that the collection $\{\NN(p,\U)\}_{p,i_1,\dots,i_p}$ forms an open cover of $LM$.

For a given loop $\gamma \in LM$ we can choose sets $U_1, \ldots , U_p$ that cover a subdivision of $\gamma$ into 
$p$ subintervals $[(k-1)/p , k/p]$, and for this choice we define
\begin{multline*}
BCS^{(p,\U)}_{2k+1} (\nabla_s)
= Tr \Bigg ( \int_0^1 \sum_{n_1,\dots,n_p\geq 0} \quad \sum_{\scriptsize
\begin{matrix}
J\subset S, |J| = k \\
(i_q,m) \in S - J
\end{matrix}
}\quad g_{i_p,i_1}(\gamma (0)) 
\\
\wedge \bigg( \int_{\Delta^{n_1}} X^1_{s, i_1}\left(\frac {t_1} p\right) \cdots X^{n_1}_{s, i_1}\left(\frac {t_{n_1}} p\right) dt_1 \cdots dt_{n_1} \bigg)
\wedge  g_{i_1,i_2}(\gamma( \scalebox{0.8}{$\frac 1 p$}))\cdots   g_{i_{p-1},i_p}(\gamma(\scalebox{0.8}{$ \frac{p-1}p$}))
\\
\wedge \bigg(
\int_{\Delta^{n_p}} X^1_{s, i_p}\left(\frac {p- 1 + t_1} p \right) \cdots X^{ n_p}_{s,i_p}\left(\frac {p- 1+t_{n_p}} p\right) dt_1 \cdots dt_{n_p} 
\bigg) ds \Bigg)
\end{multline*}
where 
 the $j^{th}$ integral  is over ${\Delta^{n_j}} :=  \{ (j-1)/p \leq t_1\leq \dots \leq t_j \leq j/p \} $. The second sum is a sum over all $k$-element index sets $J\subset S$ of the sets
$S= \{(i_r,j): r=1,\dots, p,\text{ and } 1\leq j\leq n_r\}$, and singleton $(i_q,m) \in S - J$,
\[
X^j_{s,i} = \left\{
\begin{array}{rl}
R_{s,i} & \text{if  } (i,j) \in J\\
A'_{s,i} & \text{if  } (i,j) = (i_q,m) \\
\iota A_i & \text{otherwise.}
\end{array} \right.
\] 
Recall the
\emph{Bismut-Chern-Simons form} associated to the choice $(p,\U)$ is
\[
BCS^{(p,\U)}(\nabla_s):=\sum_{k\geq 0} BCS^{(p,\U)}_{2k+1}(\nabla_s) \quad \in \Omega^{odd}(LM).
\]

We will need the following lemma, which will be used to show that  $BCS^{(p,\U)}_{2k+1} $ is invariant under subdivision in the sense that, if we increase $p$ and repeat coordinate neighborhoods in $\U$, then the expression does not change. In fact the property follows from considering the integrand $I^{(p,\U)}_{2k+1}$ of $BCS^{(p,\U)}_{2k+1} $ for fixed $s$ given by
\begin{multline*}
I^{(p,\U)}_{2k+1} = \sum_{n_1,\dots,n_p\geq 0} \quad \sum_{\scriptsize
\begin{matrix}
J\subset S, |J| = k \\
(i_q,m) \in S - J
\end{matrix}
}\quad g_{i_p,i_1} 
\\
\wedge \bigg( \int_{\Delta^{n_1}} X^1_{i_1} \left(\frac {t_1} p\right) \cdots X^{n_1}_{i_1}\left(\frac {t_{n_1}} p\right) dt_1 \cdots dt_{n_1} \bigg)
\wedge  g_{i_1,i_2}
\\ \cdots   g_{i_{p-1},i_p}
\wedge \bigg(
\int_{\Delta^{n_p}} X^1_{i_p}\left(\frac {p- 1 + t_1} p \right) \cdots X^{ n_p}_{i_p} \left(\frac {p- 1+t_{n_p}} p\right) dt_1 \cdots dt_{n_p} 
\bigg) 
\end{multline*}
 interpreted as above with
\[
X^j = \left\{
\begin{array}{rl}
R & \text{if  } (i,j) \in J\\
A' & \text{if  } (i,j) = (i_q,m) \\
\iota A & \text{otherwise.}
\end{array} \right.
\] 
with $A = A_s$, $A' = A'_s$ and $R = R_s$, where we have dropped the dependence on $s$. 

\begin{lem} \label{lem:BCSdiv} Let $k \geq 0$ and 
let $A, A'$, and $R$ be forms on $U \subset M$ with values in $\gl$. Let $\gamma_1 :[0,1] \to U$ and 
$\gamma_2: [0,1] \to U$ such that $\gamma_1(1) = \gamma_2(0)$, and let $\gamma=\gamma_2 \circ  \gamma_1: [0,1] \to U$ be the composition of the paths. Then 
\[
I^{(1,\{U\})}_{2k+1} (\gamma) = I^{(2,\{U,U\})}_{2k+1}(\gamma_2 \circ  \gamma_1).
\]

Similarly, for any $p \geq 0$ and collection $\U=(U_{i_1},\dots, U_{i_p})$, subdivide each of the $p$ intervals of $[0,1]$ into its $r$ subintervals, and let $\U'$ be the cover using the same open set $U_{i_j}$ for all of the $r$ subintervals of the $j^{\text{th}}$ interval, 
\[ U'_{i_{m\cdot r-r+1}}= \dots =U'_{i_{m\cdot r}} =U_{i_m}, \]
for $1 \leq m \leq p$. Then $\NN(p,\U)=\NN(r\cdot p, \U')$, and for $\gamma\in \NN(p,\U)$ and
\[
I^{(p, \U )}_{2k+1} (\gamma) = I^{(r \cdot p,\U ' )}_{2k+1}(\gamma).
\]
\end{lem}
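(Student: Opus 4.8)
The plan is to isolate the single mechanism at work --- the compatibility of Chen-type iterated integrals with composition of paths --- and then let both parts of the statement follow by bookkeeping. First I would dispose of the set-theoretic claim $\NN(p,\U)=\NN(r\cdot p,\U')$, which is immediate from the definitions: a loop $\gamma$ whose restriction to $[(m-1)/p,m/p]$ lies in $U_{i_m}$ is precisely a loop whose restriction to each of the $r$ equal subintervals of $[(m-1)/p,m/p]$ lies in $U'_{i_{mr-r+1}}=\dots=U'_{i_{mr}}=U_{i_m}$, and conversely.

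Next I would prove the core case $I^{(1,\{U\})}_{2k+1}(\gamma)=I^{(2,\{U,U\})}_{2k+1}(\gamma_2\circ\gamma_1)$. Here the two transition functions appearing in $I^{(2,\{U,U\})}_{2k+1}$ --- the one wrapping from the last interval back to the first, and the one inserted at the cut point $1/2$ --- are both the identity, since every chart equals $U$; so that expression collapses to $\sum_{n_1,n_2\ge 0}\sum_{J,(i_q,m)}\big(\int_{\Delta^{n_1}}X^1\cdots X^{n_1}\big)\wedge\big(\int_{\Delta^{n_2}}X^1\cdots X^{n_2}\big)$, with the first family of simplices in $[0,1/2]$ and the second in $[1/2,1]$, all forms evaluated along $\gamma=\gamma_2\circ\gamma_1$. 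The geometric input is the standard decomposition $\Delta^n_{[0,1]}=\bigcup_{n_1+n_2=n}\Delta^{n_1}_{[0,1/2]}\times\Delta^{n_2}_{[1/2,1]}$ into pieces meeting only along lower-dimensional faces (which carry no integral), together with the fact that each $X^j$ is an algebraic expression in $A_s,A'_s,R_s$ pulled back along $\gamma$ and hence unaffected by where the interval is cut; this gives $\int_{\Delta^n}X^1\cdots X^n=\sum_{n_1+n_2=n}\big(\int_{\Delta^{n_1}_{[0,1/2]}}X^1\cdots X^{n_1}\big)\wedge\big(\int_{\Delta^{n_2}_{[1/2,1]}}X^{n_1+1}\cdots X^n\big)$, with the left-to-right wedge order preserved. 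Finally the combinatorial data match: a $k$-subset $J$ of $S=\{1,\dots,n\}$ with a distinguished element $(i_q,m)\in S-J$ corresponds, under the split into the first $n_1$ and last $n_2$ indices, to a pair $J_1\subset\{1,\dots,n_1\}$, $J_2\subset\{n_1+1,\dots,n\}$ with $|J_1|+|J_2|=k$ together with a distinguished index of $S-J$ lying in one of the two blocks --- which is exactly the index set of the subdivided formula. Summing matched terms yields the equality.

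For the general statement I would run the same argument on each of the $p$ intervals of $[0,1]$ separately: the $r$-fold refinement of a single interval follows either by iterating the twofold case or directly from $\Delta^n=\bigcup_{n_1+\dots+n_r=n}\Delta^{n_1}\times\dots\times\Delta^{n_r}$, and every transition function newly introduced at an interior subdivision point is an identity (the same chart repeated), so it drops out without disturbing the wedge ordering or producing spurious terms. Since $I^{(p,\U)}_{2k+1}$ is built as a wedge of the $p$ local iterated integrals interleaved with the transition functions $g_{i_{m-1},i_m}$, which the refinement leaves untouched, and since the global sum over $J\subset S$ together with the singleton distributes over the refined index set just as in the twofold case, we obtain $I^{(p,\U)}_{2k+1}(\gamma)=I^{(r\cdot p,\U')}_{2k+1}(\gamma)$. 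I expect the main obstacle to be purely organizational: carefully tracking the subset $J$ (the positions of the $R_s$) and the single position of $A'_s$ through the simplex decomposition, and checking that the identity transition functions at the new cut points contribute trivially; nothing deeper than the classical behavior of iterated integrals under concatenation of paths is needed, which is also why the same scheme applies to $BCh$ and to the form $H(\nabla_s^r)$ treated elsewhere in this appendix.
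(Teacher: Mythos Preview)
Your proposal is correct and follows essentially the same approach as the paper: both reduce to noting that the newly inserted transition functions are the identity and then invoke the standard subdivision identity for iterated integrals over a simplex. The only cosmetic difference is that the paper phrases this via the nested-integral relation $\int_a^b+\int_b^{t_j}=\int_a^{t_j}$, whereas you phrase the same fact as the decomposition $\Delta^n_{[a,c]}=\bigcup_{n_1+n_2=n}\Delta^{n_1}_{[a,b]}\times\Delta^{n_2}_{[b,c]}$ (the latter form, incidentally, already appears in the paper in the proof of Theorem~\ref{thm:BChsumtensor}).
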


\begin{proof} 
We denote by $\Delta^j_{[r,s]} = \{ r \leq t_1\leq \dots \leq t_j \leq s \} $. For the first statement we must show
\begin{multline} \label{lem:goal}
 \sum_{\ell \geq 0} \quad \sum_{\scriptsize
\begin{matrix}
J\subset S_\ell, |J| = k \\
m \in S_\ell - J
\end{matrix}
}
\bigg( \int_{\Delta^{\ell}_{[a,c]}} X^1 \left (t_1\right) \cdots X^{\ell}\left(t_{\ell}  \right) dt_1 \cdots dt_{\ell} \bigg)  \\
= 
 \sum_{n, m \geq 0} \quad \sum_{\scriptsize
\begin{matrix}
L \subset T_{n,m}, |L| = k \\
q \in T_{n,m} - L
\end{matrix}
}\quad 
\bigg( \int_{\Delta^{n}_{[a,b]}} Y^1_1 \left( {t_1}\right) \cdots Y^{n}_1\left( {t_{n}} \right) dt_1 \cdots dt_{n} \bigg)
\\ \wedge \bigg(
\int_{\Delta^{m}_{[b,c]}} Y^1_2\left( t_1 \right) \cdots Y^{ m}_2 \left(t_{m}\right) dt_1 \cdots dt_{m} 
\bigg) 
\end{multline}
where $S_\ell = \{1,2 \dots, \ell \}$,
\[
X^j = \left\{
\begin{array}{rl}
R & \text{if  } j \in J\\
A' & \text{if  } j=m \\
\iota A & \text{otherwise,}
\end{array} \right.
\] 
and $T_{n,m} = \{ (i,j_i) | \, i=1,2, \text{ and } \, \,  1\leq j_1 \leq n , 1 \leq j_2 \leq m\}$, and 
\[
Y^j_i = \left\{
\begin{array}{rl}
R & \text{if  } (i,j) \in L\\
A' & \text{if  } (i,j) = q \\
\iota A & \text{otherwise.}
\end{array} \right.
\] 
Note, that we are using the fact that the transition function $g=id$ on $U \cap U$ for the right hand side of \eqref{lem:goal}.
The proof becomes apparent using the calculus notation for the integral over $\Delta^k_{[r,s]}$,
 \[
\int_{\Delta^k_{[r,s]}}(\dots) dt_1\dots dt_k= \int_{r}^{s}  \int_{r}^{t_k} \cdots  \int_{r}^{t_3} \int_{r}^{t_2} (\dots) dt_1\dots dt_k.
 \]
 We 
  repeatedly use the relation $\int_a^b + \int_b^{t_{j}} = \int_a^{t_{j}}$  to add all the terms on the right hand side of \eqref{lem:goal}, over all  $n$ and $m$ such that $n+m = \ell$, where $\ell$ is fixed, giving each summand of the left hand side of \eqref{lem:goal}. 
 
 The second statement is proved similarly using the fact that, for $1 \leq m \leq p$ and $1 \leq s \leq r-1$,  we have $U_{i_{m\cdot r-s}} = U_{i_{m\cdot r-s+1}}$ and $g = g_{i_{m\cdot r-s} , i_{m\cdot r-s+1}} = id$.
\end{proof}

We are now ready to prove

\begin{prop} \label{prop:BCSindepchoices}
The locally defined $BCS^{(\U,p)}(\nabla_s)$ determine a well defined global form on $LM$ independent of the choice of local trivialization charts $\U$, and 
the subdivision integer $p \in \N$. 
\end{prop}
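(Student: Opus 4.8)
The plan is to show that for any loop $\gamma \in LM$ the value of $BCS^{(p,\U)}(\nabla_s)$ in a neighborhood of $\gamma$ is the same for every admissible choice of subdivision integer $p$, cover $\U=(U_{i_1},\dots,U_{i_p})$ of a subdivision of $\gamma$, and local trivializations; then, since $\{\NN(p,\U)\}$ is an open cover of $LM$ by the Lebesgue lemma, the locally defined forms agree on overlaps and patch to a single global form $BCS(\nabla_s)\in\Omega^{odd}(LM)$, and likewise each homogeneous piece $BCS_{2k+1}(\nabla_s)$. The comparison of two admissible choices proceeds in three reductions.

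First I would reduce to a common subdivision. Given admissible $(p,\U)$ and $(p',\U')$ for $\gamma$, set $q=\mathrm{lcm}(p,p')$; the partition of $[0,1]$ into $q$ equal intervals refines both, and $\gamma$ restricted to any subinterval of $[(k-1)/p,k/p]$ still lies in $U_{i_k}$, so repeating each entry of $\U$ exactly $q/p$ times produces an admissible $q$-tuple $\U_q$, and similarly an admissible $\U'_q$. By the second statement of Lemma~\ref{lem:BCSdiv}, applied for each fixed $s$ and then integrated in $s$ and traced, $BCS^{(p,\U)}(\nabla_s)=BCS^{(q,\U_q)}(\nabla_s)$ and $BCS^{(p',\U')}(\nabla_s)=BCS^{(q,\U'_q)}(\nabla_s)$ near $\gamma$. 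So it suffices to compare two admissible choices $(q,\mathcal W)$ and $(q,\mathcal W')$ with the same subdivision integer $q$.

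Next I would reduce a change of cover to a one-slot change. Writing $\mathcal W=(U_{i_1},\dots,U_{i_q})$ and $\mathcal W'=(U_{j_1},\dots,U_{j_q})$, interpolate by the chain $(U_{j_1},\dots,U_{j_{\ell}},U_{i_{\ell+1}},\dots,U_{i_q})$ for $\ell=0,1,\dots,q$; each term is admissible for $\gamma$, since the $m$-th interval lands in $U_{j_m}$ when $m\le\ell$ and in $U_{i_m}$ when $m>\ell$. Passing from one term to the next changes the cover only in a single slot $\ell$, say from $U_a$ to $U_b$, with $\gamma|_{[(\ell-1)/q,\,\ell/q]}\subset U_a\cap U_b$. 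In formula~\eqref{eq:BCS} the only pieces that change are the $\ell$-th integral block and the two transition functions adjacent to it, $g_{\cdot,a}$ at $\gamma((\ell-1)/q)$ and $g_{a,\cdot}$ at $\gamma(\ell/q)$. Using the cocycle relations $g_{\cdot,a}=g_{\cdot,b}\,g_{b,a}$ and $g_{a,\cdot}=g_{a,b}\,g_{b,\cdot}$ at those points, the needed equality reduces to the gauge-covariance identity
\[
g_{b,a}\wedge\Big(\sum_{n\ge0}\ \sum_{\text{insertion patterns}}\int_{\Delta^{n}}X^1_{s,a}\cdots X^{n}_{s,a}\Big)\wedge g_{a,b}
\;=\;\sum_{n\ge0}\ \sum_{\text{insertion patterns}}\int_{\Delta^{n}}X^1_{s,b}\cdots X^{n}_{s,b},
\]
with $g_{b,a}$ and $g_{a,b}$ evaluated at the left and right endpoints of the $\ell$-th interval, respectively. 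This is precisely the computation by which $BCh(\nabla)$ was shown in \cite{TWZ} to be independent of trivialization: differentiating $g_{ab},g_{ba}$ along $\gamma$ produces $\partial_t$-terms that telescope against the inhomogeneous term in $A_a=g_{ab}A_bg_{ba}+g_{ab}\,\d g_{ba}$, exactly as in the proof of Theorem~\ref{thm:dBCS}, while $R_a=g_{ab}R_bg_{ba}$ and $A'_a=g_{ab}A'_bg_{ba}$ are tensorial because the transition functions do not depend on $s$; hence the inserted $R$'s and the single inserted $A'$ cost nothing. The same telescoping, with $a=b$ and a nontrivial change of frame over a single $U_i$, gives independence of the chosen trivializations, and applying it over a common refinement $\{U_i\cap U'_j\}$ handles a change of the underlying cover of $M$.

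Combining the three reductions, all admissible $BCS^{(p,\U)}(\nabla_s)$ agree on the overlaps $\NN(p,\U)\cap\NN(p',\U')$, so they patch to a well-defined global form $BCS(\nabla_s)\in\Omega^{odd}(LM)$. The main obstacle is the displayed gauge-covariance identity, i.e. that the ``trace of holonomy with $R$'s and one $A'$ shuffled in,'' restricted to a subinterval, transforms by conjugation by the transition function at the endpoints; everything else is either the already-proved subdivision lemma or formal gluing. Once one grants the $BCh$ version of this identity from \cite{TWZ}, the $BCS$ version follows immediately because the extra factor $A'_s$ transforms tensorially.
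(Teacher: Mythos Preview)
Your proposal is correct and follows essentially the same two-step strategy as the paper: reduce to a common subdivision via Lemma~\ref{lem:BCSdiv}, then prove the overlap property by showing that the pre-trace integrand on each subinterval transforms by conjugation under a change of local trivialization (using that $R$ and $A'$ transform tensorially while the inhomogeneous term $g^{-1}dg$ in $A$ is absorbed by the iterated-integral identity). The only organizational differences are that the paper handles all slots simultaneously rather than interpolating one slot at a time, and it writes out the gauge-covariance step explicitly via the identity $\sum_{k\ge 0}\int_{\Delta^k_{[r,s]}}\iota(g^{-1}dg)(t_1)\cdots\iota(g^{-1}dg)(t_k)\,dt_1\cdots dt_k=g(r)^{-1}g(s)$, whereas you defer this to \cite{TWZ}.
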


\begin{proof} It suffices to show that each odd form $BCS^{(\U,p)}_{2k+1}(\nabla_s)$ is a well defined global form on $LM$, independent of the choice of local trivialization charts $\U$ and the subdivision integer $p \in \N$.

We'll prove the following two properties:
\begin{enumerate}
\item Subdivision: Fix $p$ and $\U = \{ U_{i_1},\dots, U_{i_p} \}$. Subdivide each of the $p$ intervals of $[0,1]$ into $r$ subintervals, and use the same open set $U_{i_j}$ for all of the $r$ subintervals of the $j^{\text{th}}$ interval, to give a new cover $\U'$ with 
\[
U'_{i_{m\cdot r-r+1}}= \dots =U'_{i_{m\cdot r}} =U_{i_m}
\]
 for $1 \leq m \leq p$. 
Then $\NN(p,\U)=\NN(r\cdot p, \U')$, and $BCS^{(\U,p)}_{2k+1}(\nabla_s) = BCS^{(\U',r \cdot p)}_{2k+1}(\nabla_s)$.
\item Overlap: Assume that $p\in \N$, and that $\U = \{U_{i_1} , \ldots , U_{i_p} \}$, and 
 $\U' = \{U_{j_1} , \ldots , U_{j_p} \}$. Denote by $\U \cap \U' = \{ (U_{i_1} \cap U_{j_1}), \ldots , (U_{i_p} \cap U_{j_p}) \}$, and assume furthermore, that $\gamma \in  \NN(p,\U \cap \U')$. Then, we have 
\[
BCS^{(\U',p)}_{2k+1}(\nabla_s) (\gamma) = BCS^{(\U,p)}_{2k+1}(\nabla_s) (\gamma).
\]
\end{enumerate}
The proposition follows from these two facts, since for  $\gamma\in \NN(p,U_{i_1},\dots)\cap \NN(p',U_{j_1},\dots)$, we may assume by (1) that $p=p'$, and then by $(2)$ that the forms agree on the overlap.

Note that (1) follows from Lemma \ref{lem:BCSdiv}, which is the analogous statement for the integrand $I^{(p,\U)}_{2k+1} $.

We now prove (2). Since trace is invariant under conjugation, it suffices to show that for each fixed $s$ the integrand 
$I^{(p,\U)}_{2k+1} $ changes by conjugation if we perform a collection of local gauge transformations on each $U_i \in \U$.  In fact, it suffices to prove this for the integral expression $\int_{\Delta^{n_j}}$ on the $j^{th}$ subinterval,
since the sum defining $BCS^{(\U,p)}_{2k+1}(\nabla_s)$ can be re-ordered as a sum first over all $1 \leq j \leq p $ and $n_j \geq 0$, where the forms $A'_s$  and $R_s$ vary on this interval for each possible arrangement on the remaining intervals.
To this end, we'll drop the $s$ dependence and it suffices to show for each $k \geq 0$ that 
\begin{multline*} 
g(0) \Bigg( \sum_{n\geq k+1}\sum_{\tiny\begin{matrix}{1\leq i_1,<\dots<i_k\leq n}\\{1\leq r\leq n, \forall j: r\neq i_j}\end{matrix}}  \int_{\Delta^n_{[0,1/p]}}
 \\
 \iota A_j (t_1)\dots R_j(t_{i_1})
 \dots A'_j(t_r)\dots R_j  (t_{i_k})\dots \iota A_j (t_n) \, dt_1\dots dt_n \Bigg) 
 \end{multline*}
\begin{multline*} 
 =
 \Bigg( \sum_{n\geq k+1}\sum_{\tiny\begin{matrix}{1\leq i_1,<\dots<i_k\leq n}\\{1\leq r\leq n, \forall j: r\neq i_j}\end{matrix}}  \int_{\Delta^n_{[0,1/p]}} 
 \\
 \iota A_i (t_1)\dots R_i(t_{i_1})
 \dots A'_i(t_r)\dots R_i  (t_{i_k})\dots \iota A_i (t_n) \, dt_1\dots dt_n \Bigg) g\left(\frac 1 p\right)
 \end{multline*}
 where $\Delta^n_{[0,1/p]} = \{0 \leq t_1 \leq \dots \leq t_k \leq 1/p\}$, $g = g_{i,j}: U_i \cap U_j \to Gl(n,\C)$ is the coordinate transition function, and $A_j = g^{-1} A_i g + g^{-1} dg$.
 We first prove this for $k =0$, i.e. that there are no $R$'s in the above expression. The general case will follow by similar arguments.

 We use the following multiplicative version of the fundamental theorem of calculus for the iterated integral. For $r < s$,
\begin{equation} \label{Gltrans}
  \sum_{k \geq 0} \int_{\Delta^k_{[r,s]}} \iota(g^{-1} dg)(t_1) \dots \iota (g^{-1} dg)(t_{k}) dt_1\dots dt_k = g(r)^{-1} g(s).
\end{equation}
Here the $k$-simplex used in the integral is $\Delta^k_{[r,s]}=\{r \leq t_1 \leq \dots \leq t_k \leq s\}$. One proof of this is given by observing that the function
\[
f(s) = g(r) \left(\sum_{k \geq 0} \int_{\Delta^k_{[r,s]}} \iota(g^{-1} dg)(t_1) \dots \iota (g^{-1} dg)(t_{k}) dt_1\dots dt_k \right) g(s)^{-1}
\]
satisfies $f(r) = Id$, and also $f'(s) = 0$, since the left hand side of \eqref{Gltrans} is the formula for parallel transport for the connection $A = g^{-1}dg$ and so is the solution to the ordinary differential equation $x'(t) = x(t) \iota_{d/dt} (g^{-1} dg) $. The latter equation can also be checked by direct calculation using the fundamental theorem of calculus.

We use this to calculate 
\[
g(0)  \Bigg( \sum_{n\geq 1}\sum_{1 \leq r \leq n}  \int_{\Delta^n_{[0,1/p]}}
 \iota A_j (t_1) \dots A'_j(t_r) \dots \iota A_j (t_n) \, dt_1\dots dt_n \Bigg)  
\]
by making the substitution $A_j = g^{-1} A_i g + g^{-1} dg$ and $A'_j = g^{-1} A'_i g$, which gives
\begin{multline} \label{gIt}
g(0) \sum_{\tiny\begin{matrix}{k_1,\ldots, k_{m} \geq 0}\\{m\geq 2,0< r < m}\end{matrix}}  \int_{\Delta^{n_m+m-1}_{[0,1/p]}}
[ \iota (g^{-1} dg)(t_1) \cdots \iota (g^{-1} dg)(t_{k_1})  \wedge \iota (g^{-1} A_i g)(t_{k_1+1}) ] 
\\ \wedge 
[ \iota (g^{-1} dg)(t_{k_1+2}) \cdots \iota (g^{-1} dg)(t_{k_1+ k_2+1})  \wedge \iota (g^{-1} A_i g)(t_{k_1+ k_2+2}) ] 
\\ \wedge \cdots 
\wedge (g^{-1} A'_i g)(t_{n_r+r }) \wedge \cdots \wedge (g^{-1} A_i g)(t_{n_{m-1} + m-1}) \\ \wedge [  \iota(g^{-1} dg)(t_{n_{m-1} + m }) \cdots \iota(g^{-1} dg)(t_{n_m+m-1})] dt_1\dots dt_{n_m+m-1},
\end{multline}
where $n_i = k_1 + \dots + k_i$.

We claim this equals 
\[
\Bigg( \sum_{n\geq 1}\sum_{1 \leq r \leq n}  \int_{\Delta^n_{[0,1/p]}}
 \iota A_i (t_1) \dots A'_i(t_r) \dots \iota A_i (t_n) \, dt_1\dots dt_n \Bigg) g\left(\frac 1 p\right)
\]
To see this, for each $m$, we apply the identity in \eqref{Gltrans} $m$ times, showing all the integrals of $g^{-1}dg$ that appear, collapse.  In the first case we have
\[
g(0) \sum_{k_1 \geq 0}  \int_{\{0 \leq t_1\leq \dots \leq t_{k_1} < t_{k_1 +1} \}} \iota (g^{-1} dg)(t_1) \dots \iota (g^{-1} dg)(t_{k_1}) dt_1 \cdots dt_{k_1}= g(t_{k_1+1})
\]
reducing \eqref{gIt} to 
\begin{multline*}
\sum_{\tiny\begin{matrix}{k_2 , \ldots , k_m \geq 0}\\{m\geq 2, 0 < r < m}\end{matrix}}  \int_{\Delta^{(k_2+\dots+k_m)+m-1}_{[0,1/p]}}
\iota A_i(t_{k_1 +1}) g(t_{k_1+1}) 
\\ \wedge 
[ \iota (g^{-1} dg)(t_{k_1+2}) \cdots \iota (g^{-1} dg)(t_{k_1+ k_2+1})  \wedge \iota (g^{-1} A_i g)(t_{k_1+ k_2+2}) ] 
\\ \wedge \cdots 
\wedge (g^{-1} A'_i g)(t_{n_r+r}) \wedge \cdots \wedge (g^{-1} A_i g)(t_{n_{m-1} + m-1}) \\ \wedge [  \iota(g^{-1} dg)(t_{n_{m-1} + m }) \cdots \iota(g^{-1} dg)(t_{n_m+m-1})] dt_{k_1+1} \dots dt_{n_m+m},
\end{multline*}
where $n_i = k_1 + \dots + k_i$.

Similarly, for each $\ell$ with $1 < \ell \leq m$, fixing $t_{n_\ell + \ell}$, we have 
\begin{multline*}
\int_{ \Delta(\ell) }  
\iota A_i(t_{n_\ell + \ell}) g(t_{n_\ell + \ell}) \iota (g^{-1} dg)(t_{n_\ell + \ell+1}) \\
\dots \iota (g^{-1} dg)(t_{n_{\ell+1} + \ell}) 
dt_{n_\ell + \ell+1} \cdots dt_{n_{\ell+1} + \ell} = \iota A_i(t_{n_\ell + \ell}) g(t_{n_{\ell+1} + \ell + 1} )
\end{multline*}
where $\Delta(\ell) =  \{t_{n_\ell + \ell} \leq t_{n_\ell + \ell +1 }\leq \dots \leq t_{n_{\ell+1} + \ell} \leq t_{n_{\ell+1} + \ell+1} \}$,
and a similar formula holds for where we replace $A$ by $A'$.
Again,  $g(t_{n_{\ell+1} + \ell + 1} )$ cancels with $g^{-1}(t_{n_{\ell+1} + \ell + 1} )$ and, continuing in this way, we see that entire sum in \eqref{gIt} collapses to
\[
\Bigg( \sum_{m\geq 1}\sum_{1 \leq r \leq m}  \int_{\Delta^m_{[0,1/p]}}
 \iota A_i (t_1) \dots A'_i(t_r) \dots \iota A_i (t_m) \, dt_1\dots dt_m \Bigg) g\left(\frac 1 p\right)
\]

Similarly, the general case for $k \geq 0$ follows by using the same argument as above together with the fact that $R_j = g^{-1} R_i g$.
\end{proof}

\begin{rmk}
In \cite{TWZ}, we have used similar techniques to show that $BCh(\nabla)$ is a well defined global form on $LM$, independent of the choice of local trivialization charts $\U$, and the subdivision integer $p \in \N$. In particular this shows that
if two connections $\nabla_0$ and $\nabla_1$ are gauge equivalent, then $BCh(\nabla_0) = BCh(\nabla_1)$.
\end{rmk}

\begin{cor} \label{cor:BCSclosedforGE}
Let $\nabla_0$ and $\nabla_1$ be gauge equivalent connections on $E \to M$. Then $BCS(\nabla_s)$ is
$(d+\iota)$-closed for any path $\nabla_s$ from $\nabla_0$ to $\nabla_1$. 
\end{cor}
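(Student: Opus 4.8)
The plan is to deduce the statement directly from the fundamental homotopy formula together with gauge-invariance of the Bismut-Chern form. First, I would invoke Theorem \ref{thm:dBCS}: for \emph{any} path $\nabla_s$ of connections from $\nabla_0$ to $\nabla_1$ one has
\[
(d+\iota)(BCS(\nabla_s)) = BCh(\nabla_1) - BCh(\nabla_0),
\]
so the assertion is reduced to the claim that $BCh(\nabla_0) = BCh(\nabla_1)$ whenever $\nabla_0$ and $\nabla_1$ are gauge equivalent. Note this already makes the conclusion path-independent, since the right-hand side depends only on the endpoints; alternatively one could appeal to Proposition \ref{prop:bigon}.

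Second, I would establish $BCh(\nabla_0) = BCh(\nabla_1)$ by recognizing a gauge transformation as a change of local trivialization. If $f : E \to E$ is a bundle automorphism covering $\mathrm{id}_M$ with $f^*\nabla_1 = \nabla_0$, and $\nabla_1$ has local connection matrices $A_i$ and transition functions $g_{ij}$ relative to a family of local trivializations, then relative to the same trivializations $\nabla_0$ has local connection matrices $g_f^{-1}A_i g_f + g_f^{-1}\,dg_f$ and transition functions $g_f^{-1} g_{ij} g_f$, where $g_f$ is the local expression of $f$; but these are precisely the local data of $\nabla_1$ with respect to the trivializations twisted by $g_f$. Since $BCh$ is a global form on $LM$, independent of the chosen local trivializations and of the subdivision integer (the analogue for $BCh$ of Proposition \ref{prop:BCSindepchoices}, proved in \cite{TWZ} and recorded in the Remark above), we obtain $BCh(\nabla_0) = BCh(\nabla_1)$.

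Combining the two steps yields $(d+\iota)(BCS(\nabla_s)) = 0$ for every path $\nabla_s$ from $\nabla_0$ to $\nabla_1$, which is the claim. I do not anticipate a genuine obstacle here: the corollary is essentially immediate from Theorem \ref{thm:dBCS}, and the only point needing a word of justification is the equality $BCh(\nabla_0) = BCh(\nabla_1)$ for gauge-equivalent connections, which is exactly the content of the Remark preceding this corollary.
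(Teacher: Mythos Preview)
Your proposal is correct and follows essentially the same route as the paper: invoke Theorem~\ref{thm:dBCS} to reduce to $BCh(\nabla_0)=BCh(\nabla_1)$, then use gauge-invariance of $BCh$ as recorded in the Remark immediately preceding the corollary. The paper's proof is the one-line version of exactly this argument.
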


\begin{proof} By Theorem \ref{thm:dBCS} and the previous remark, $(d+\iota) BCS(\nabla_s) = BCh(\nabla_1) - BCh(\nabla_0) =0$.
\end{proof}

\begin{cor} \label{cor:BCSgaugeindep}
Let $\nabla_s$ be a path of connections on $E \to M$,  $g:E \to E$ be a bundle isomorphism (gauge transformation),
and let $g^*\nabla_s$ be the path of pullback connections. Then $BCS(g^*\nabla_s) = BCS(\nabla_s)$.
In partcicular, if $[BCS(\nabla_0, \nabla_1)] = 0$ then $[BCS(g^*\nabla_0, g^*\nabla_1)] = 0$ for any gauge transformation
$g$.
\end{cor}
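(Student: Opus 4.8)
The plan is to deduce the identity $BCS(g^*\nabla_s)=BCS(\nabla_s)$ directly from the trivialization‑independence already established in Proposition~\ref{prop:BCSindepchoices}, by observing that a gauge transformation only amounts to a change of local trivialization. Fix a covering $\{U_i\}$ of $M$ with local trivializations $\phi_i\colon E|_{U_i}\to U_i\times\C^n$, transition functions $g_{ij}=\phi_i\phi_j^{-1}$, and local data $A_{s,i}$, $R_{s,i}$, $A'_{s,i}$ for $\nabla_s$. Since $g$ covers $\id_M$, the compositions $\psi_i:=\phi_i\circ g\colon E|_{U_i}\to U_i\times\C^n$ are again local trivializations of $E$ over the \emph{same} cover. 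The first step is the bookkeeping check that (i) the transition functions for $\{\psi_i\}$ are $\psi_i\psi_j^{-1}=\phi_i\,g\,g^{-1}\phi_j^{-1}=g_{ij}$, i.e.\ unchanged, and (ii) with the convention $(g^*\nabla_s)\sigma=g^{-1}\nabla_s(g\sigma)$ the local connection form of $g^*\nabla_s$ in the trivialization $\psi_i$ is exactly $A_{s,i}$: writing $\psi_i(\sigma)=\phi_i(g\sigma)$ one computes $\psi_i\big((g^*\nabla_s)\sigma\big)=\phi_i\big(\nabla_s(g\sigma)\big)=d(\psi_i\sigma)+A_{s,i}\,\psi_i(\sigma)$. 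Hence the curvature and the $s$–derivative of $g^*\nabla_s$ in the trivialization $\psi_i$ are $R_{s,i}$ and $A'_{s,i}$ as well.

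Granting this, the second step is immediate. The formula in Definition~\ref{defn:BCS} for $BCS^{(p,\U)}(g^*\nabla_s)$, computed using the trivializations $\{\psi_i\}$, is term by term identical to the formula for $BCS^{(p,\U)}(\nabla_s)$ computed using the trivializations $\{\phi_i\}$, since the only inputs to that formula are the transition functions $g_{ij}$ and the local quantities $\iota A$, $A'$, $R$. By Proposition~\ref{prop:BCSindepchoices} each of these two expressions is independent of the chosen local trivializations, so both define the same global form on $LM$, giving $BCS(g^*\nabla_s)=BCS(\nabla_s)$. The final clause follows formally: $[BCS(g^*\nabla_0,g^*\nabla_1)]=[BCS(g^*\nabla_s)]=[BCS(\nabla_s)]=[BCS(\nabla_0,\nabla_1)]$, which is $0$ by hypothesis.

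An alternative, more computational route keeps the trivializations $\{\phi_i\}$, in which $g$ has local expressions $g_i=\phi_i\,g\,\phi_i^{-1}\colon U_i\to GL(n,\C)$ satisfying $g_i g_{ij}=g_{ij}g_j$, so that $g^*\nabla_s$ has local form $g_i^{-1}A_{s,i}g_i+g_i^{-1}dg_i$ and local curvature $g_i^{-1}R_{s,i}g_i$. One then reruns verbatim the iterated–integral manipulation from the ``overlap'' part of the proof of Proposition~\ref{prop:BCSindepchoices} on each of the $p$ subintervals, which moves each factor $g_i$ to the endpoints of its subinterval; the interior factors telescope against the transition functions via $g_i g_{ij}=g_{ij}g_j$, and the remaining two boundary factors cancel using $\gamma(0)=\gamma(1)$, producing exactly the integrand of $BCS^{(p,\U)}(\nabla_s)$ (even before applying $\Tr$).

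I expect no genuine obstacle here: the only point requiring care, in either approach, is the bundle‑theoretic convention check that the pullback connection acquires local form $A_{s,i}$ in the pulled‑back trivialization $\psi_i$ (equivalently, $g_i^{-1}A_{s,i}g_i+g_i^{-1}dg_i$ in the original one). Once that is pinned down, the statement reduces to a citation of Proposition~\ref{prop:BCSindepchoices} (or to re‑running its proof), and the ``in particular'' clause is purely formal.
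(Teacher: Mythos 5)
Your argument is correct and is exactly the paper's proof, just spelled out in detail: the paper's one-line justification ("a global gauge transformation induces local gauge transformations, and $BCS$ is independent of local trivializations") is precisely your observation that in the trivializations $\psi_i=\phi_i\circ g$ the local data and transition functions of $g^*\nabla_s$ coincide with those of $\nabla_s$ in the $\phi_i$, so Proposition~\ref{prop:BCSindepchoices} finishes the job. Your bookkeeping check of the pullback convention and the formal deduction of the ``in particular'' clause are both fine.
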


\begin{proof} The first statement follows from the theorem since $BCS$ is well defined independent of local trivializations, and 
a global gauge transformation induces local gauge transformations. 
\end{proof}

By exactly the same argument as in Proposition \ref{prop:BCSindepchoices}, we can also prove that the form $H$ appearing in Proposition \ref{prop:bigon} is a well-defined form on $LM$.
\begin{prop}\label{PROP:H-well-def}
The locally defined form $H (\nabla_s^r)  = \sum_{k \geq  0} H_{2k+1} (\nabla_s^r)$ where $H_{2k+1} (\nabla_s^r)$ is defined by equation \eqref{Def-H_(2k+1)} is independent of choice of local trivialization charts $\U$, and the subdivision integer $p\in \N$.
\end{prop}

\begin{proof}
The proof is the same of the one in Proposition \ref{prop:BCSindepchoices}, since the only difference between $BCS(\nabla_s)$ and $H(\nabla^r_s)$ is that $BCS(\nabla_s)$ contains precisely one factor $\frac{\partial A_s}{\partial s}$ whereas $H(\nabla^r_s)$ contains one factor $\frac{\partial A^r_s}{\partial r}$ and one factor $\frac{\partial A^r_s}{\partial s}$.

We denote the dependence on the local data by $H^{(\U,p)}(\nabla^r_s)$. Then, proceeding as in Proposition \ref{prop:BCSindepchoices} as well as using the same notation, it suffices to check:
\begin{enumerate}
\item Subdivision: $H^{(\U,p)}(\nabla^r_s)=H^{(\U',r\cdot p)}(\nabla^r_s)$
\item Overlap: $H^{(\U',p)}(\nabla^r_s)(\gamma)=H^{(\U,p)}(\nabla^s_r)(\gamma)$
\end{enumerate}

For (1), we use the same argument as in Lemma \ref{lem:BCSdiv}, namely, we use equation \eqref{lem:goal} where we place instead of one factor $A'$ precisely the two factors $\frac{\partial A}{\partial r}$ and $\frac{\partial A}{\partial s}$ in all possible ways.

For (2), the same argument as in Proposition \ref{prop:BCSindepchoices} applies, now using that both partial derivatives of $A^r_s$ transform as $\frac{\partial (A^r_s)_j}{\partial r}=g^{-1}\frac{\partial (A^r_s)_i}{\partial r} g$ and $\frac{\partial (A^r_s)_j}{\partial s}=g^{-1}\frac{\partial (A^r_s)_i}{\partial s} g$.

This completes the proof the proposition.
\end{proof}

\end{document}